    \theoremstyle{plain}
    \newtheorem{thm}{Theorem}[section]
    \newtheorem{lem}[thm]{Lemma}
    \newtheorem{prop}[thm]{Proposition}
    \newtheorem{cor}[thm]{Corollary}
    \theoremstyle{definition}
    \newtheorem{defn}{Definition}[section]
    \newtheorem{exmp}{Example}[section]
    \theoremstyle{remark}
    \newtheorem{rem}{Remark}[section]
\definecolor{kiyoshi}{RGB}{0,0,200}
\definecolor{ray}{RGB}{200,0,0}
\def\mput #1 #2/{\put{$\bullet_{#1}$} [tl] <-1mm,1mm> at #2}
\begin{document}

\centerline{\textbf{\Large On Clusters and Exceptional Sets in Types $\mathbb{A}$ and $\tilde{\mathbb{A}}$}}
\medskip
\centerline{Kiyoshi Igusa and Ray Maresca} 

\begin{abstract}
\noindent
In this paper we first study clusters in type $\tilde{\mathbb{A}}$ by collecting them into a finite number of infinite families given by Dehn twists of their corresponding triangulations, and show that these families are counted by the Catalan numbers. We also highlight the similarities and differences between the annuli diagrams used to study clusters and those used to study exceptional sets in type $\tilde{\mathbb{A}}$. We then focus on exceptional collections (sets) of modules over path algebras of quivers by first showing that the notion of relative projectivity in exceptional sets is well defined. We finish by counting the number of exceptional sets of representations of type $\mathbb{A}$ quivers with straight orientation and using this to count the number of families of exceptional sets of type $\tilde{\mathbb{A}}$ with straight orientation.
\end{abstract}

\section{Introduction}

\indent 

Clusters and cluster algebras have been of much interest since the early 2000's by work of Fomin and Zelevinsky. In [\ref{ref: clusters are in bijection with triangulations}], it was shown that clusters in type $\tilde{\mathbb{A}}_n$ are in bijection with triangulations of an annulus associated to a quiver of type $\tilde{\mathbb{A}}_n$, of which there are infinitely many. Using the methods introduced in [\ref{ref: Maresca}], these triangulations, and hence clusters in type $\tilde{\mathbb{A}}_n$, can be placed into families where we move from one member of the family to the next by $2\pi$ clockwise or counterclockwise Dehn twists about the inner circle of the annulus. Our first result, Theorem \ref{thm: counting clusters}, gives how many families of such clusters there are for the straightly orientated quiver of type $\tilde{\mathbb{A}}_n$.\\

In [\ref{ref: Master's Thesis Clusters and Triangulations}], the bijection between clusters and triangulations in type $\tilde{\mathbb{A}}_n$ is given explicitly, which we will repeat in this paper for convenience and will be called the cluster convention. In [\ref{ref: Maresca}], a different convention for assigning arcs on an annulus to exceptional modules is given, which we call the exceptional convention. In order for such a diagram to form an exceptional collection, no arcs in the diagram can cross or form cycles; however, most triangulations of the annulus contain cycles. On the surface, this seems like a contradiction since any cluster gives an exceptional collection by a result in [\ref{ref: Clusters form exceptional collections}]; however, our second result, Theorem \ref{thm: when the conventions coincide}, describes precisely when the two conventions coincide. \\

In Theorem \ref{thm: rel projective is independent of order}, we prove that relative projectivity and injectivity are well defined in exceptional sets of modules over any hereditary algebra. We then focus our attention to exceptional collections of modules over $\Bbbk Q$ where $Q$ is a quiver of type $\mathbb{A}_n$ with straight orientation. In particular, in Section \ref{sec: counting exceptional sets} we show that these exceptional sets are counted by generalized Catalan numbers and are in bijection with ternary trees and lattice paths in $\mathbb{R}^2$. Moreover, we classify precisely when modules are relatively projective and injective in the exceptional set. We then focus on quivers of type $\tilde{\mathbb{A}}_n$ with straight orientation. In this case we show that the number of families of exceptional collections is counted by a different collection of generalized Catalan numbers. We then provide an explicit bijection between these families of exceptional collections and lattice paths in $\mathbb{R}^2$. We conclude the paper with a section of explicit examples of both families of clusters and exceptional collections for $Q$ a quiver of type $\tilde{\mathbb{A}}_3$ with straight orientation.

\section{Preliminaries}
\indent 

Let $\Bbbk$ be a field. Let $Q=(Q_0,Q_1,s,t)$ denote a quiver with vertex set $Q_0$, arrow set $Q_1$ and functions $s,t:Q_1 \rightarrow Q_0$ that assign to each arrow a starting and terminal point respectively. Denote by $\Bbbk Q$ the path algebra of $Q$. This is a hereditary algebra when $Q$ is acyclic. Finitely generated $\Bbbk Q$-modules are finite dimensional representations of $Q$. These form the category $mod$-$\Bbbk Q$. An object $V$ in this category is \textbf{partial tilting} if Ext$_{\Bbbk Q}^1 (V,V) = 0$ and it is $\textbf{tilting}$ if in addition the number of indecomposable summands of $V$ equals the number of vertices in the quiver. Let $D^b(\Bbbk Q)$ denote the bounded derived category of $mod$-$\Bbbk Q$. It is known that $D^b(\Bbbk Q)$ is a triangulated category with shift functor $[i]:D^b(\Bbbk Q) \rightarrow D^b(\Bbbk Q)$ and almost split triangles induced by almost split sequences in mod $\Bbbk Q$. Let $\tau: D^b(\Bbbk Q) \rightarrow D^b(\Bbbk Q)$ denote the equivalence which induces the Auslander--Reiten translation so that $\tau C = A$ if we have a triangle of the form $A\rightarrow B \rightarrow C \rightarrow A[1]$. The \textbf{cluster category} $\mathcal{C}_Q$ is the orbit category $D^b(\Bbbk Q)/F$ where $F$ is the auto-equivalence given by $F = \tau^{-1}[1]:D^b(\Bbbk Q) \rightarrow D^b(\Bbbk Q)$. A module $T \in$  Ob$(\mathcal{C}_Q)$ is called a \textbf{cluster} if it is a tilting object in $\mathcal{C}_Q$; that is, Ext$_{\mathcal{C}_Q}^1 (T,T) = 0$ and the number of indecomposable summands of $T$ is $|Q_0|$. For more on tilting theory and general representations of quivers see [\ref{ref: blue book}] and [\ref{ref: Schiffler Quiver Reps}]. For more on cluster categories see [\ref{ref: Reiten Cluster Categories}]\\ 

We call $V$ \textbf{exceptional} if End$_{\Bbbk Q}(V)$ is a division ring and Ext$_{\Bbbk Q}^i (V,V) = 0$ for all $i\geq 1$. When $\Bbbk Q$ is hereditary, the first exceptionality condition implies that the representation $V$ is indecomposable and the second reduces to Ext$_{\Bbbk Q}^1 (V,V) =$ Ext$(V,V) = 0$ as higher extension groups vanish. An \textbf{exceptional sequence} $\xi = (V_1, \dots, V_k)$ is a sequence of exceptional representations of $Q$ such that Hom$(V_i,V_j) = 0 =$ Ext$(V_i,V_j)$ for all $j < i$. An \textbf{exceptional set}, or \textbf{exceptional collection} is a \textit{set} of representations $\bar{\xi}=\{V_1, \dots, V_k\}$ such that the $V_j$ can be ordered in such a way that they make an exceptional sequence. It is well known that if $(V_1, \dots, V_k)$ is an exceptional sequence, then $k \leq n$ [\ref{ref: Bill Exceptional Sequences}]. When $k = n$, we call $\xi$ $(\bar{\xi})$ a \textbf{complete} exceptional sequence (collection). From now on, we adopt the convention that `exceptional sequence (collection)' means complete. For more on exceptional sequences see [\ref{ref: Bill Exceptional Sequences}], [\ref{ref: Braid Group Action on Exceptional Sequences}], and [\ref{ref: Dynkin Exceptional Sequences}].

\subsection{Quivers of Type $\mathbb{A}_n$ and $\mathbb{\tilde{A}}_n$} 

\indent

 To make a Dynkin graph of type $\mathbb{A}_n$ a quiver, we define an \textbf{orientation vector} $\bm{\varepsilon} = (\varepsilon_0, \dots , \varepsilon_n) \in \{-,+\}^{n+1}$. Then \textbf{the quiver of type $\mathbb{A}_n$} with this orientation, denoted by $Q^{\bm{\varepsilon}} = (Q_0^{\bm{\varepsilon}},Q_1^{\bm{\varepsilon}},s,t)$, is the one such that $Q_0^{\bm{\varepsilon}} = \{1,2,\dots,n\}$ and for each $\alpha_i \in Q_1^{\bm{\varepsilon}}$ with $1\leq i \leq n-1$, 
\vspace{-.6cm}
\begin{center}
\begin{displaymath}
   \alpha_i = \left\{
     \begin{array}{lr}
       i \rightarrow i+1 & : \varepsilon_i = +\\
       i \leftarrow i+1 & :  \varepsilon_i = -
     \end{array}
   \right.
\end{displaymath}
\end{center}

Note that neither $\varepsilon_0$ nor $\varepsilon_n$ affect $Q^{\bm{\varepsilon}}$. Moreover, note that any quiver of type $\mathbb{A}_n$ is given in this way. A quiver of type $\mathbb{A}_n$ is said to have \textbf{straight orientation} if $\varepsilon_i = +$ for all $i \in \{1, 2, \dots , n-1\}$. Moreover, we assume the vertices are labeled from left to right in natural numerical order: 
\[
	\mathbb{A}_n:\quad 1\to 2\to \cdots\to n
\]
An indecomposable representation of a linear quiver of type $\mathbb{A}_n$, so $\varepsilon = +$ or $-$ for all $i$, is given by its support which is a closed interval $[a, b]$ where $1 \leq a \leq b \leq n$. For $\mathbb A_n$ with straight orientation, this representation is denoted $M_{a-1,b}$. Thus $M_{x,y}$, for $0\le x<y\le n$, has top the simple module at $x+1$ and socle or bottom the simple module at $y$.

\indent

 To make a graph of type $\tilde{\mathbb{A}}_n$ a quiver, we define an \textbf{orientation vector} $\bm{\varepsilon} = (\varepsilon_0, \dots , \varepsilon_n) \in \{-,+\}^{n+1}$. Then \textbf{the quiver of type $\tilde{\mathbb{A}}_n$} with this orientation, denoted by $Q^{\bm{\varepsilon}} = (Q_0^{\bm{\varepsilon}},Q_1^{\bm{\varepsilon}},s,t)$, is the one such that $Q_0^{\bm{\varepsilon}} = \{1,2,\dots,n,n+1\}$ and for each $\alpha_i \in Q_1^{\bm{\varepsilon}}$ with $0\leq i \leq n-1$, we define $\alpha_i \in Q^{\varepsilon}_1$ as

\vspace{-.7cm}

\begin{center}
\begin{multicols}{2}

 \begin{displaymath}
   \alpha_i = \left\{
     \begin{array}{lr}
       i \rightarrow i+1 & : \varepsilon_i = +\\
       i \leftarrow i+1 & :  \varepsilon_i = -
     \end{array}
   \right.
\end{displaymath}

\columnbreak

 \begin{displaymath}
   \alpha_0 = \left\{
     \begin{array}{lr}
       n+1 \rightarrow 1 & : \varepsilon_0 = +\\
       n+1 \leftarrow 1 & :  \varepsilon_0 = -
     \end{array}
   \right.
\end{displaymath}

\end{multicols}
\end{center}

Note that any quiver of type $\tilde{\mathbb{A}}_n$ is given in this way. Moreover, note that when considering our vertex set modulo $|Q_0| = n+1$, the convention for $\alpha_0$ is consistent with that of $\alpha_i$. So long as $\varepsilon_i \neq \varepsilon_j$ for some $i$ and $j$, these quivers are both hereditary and tame. We say a quiver $Q$ of type $\tilde{\mathbb{A}}_n$ has \textbf{straight orientation} if $\varepsilon_i = +$ for all $i \in \{1, 2, \dots , n\}$ and $\varepsilon_0 = -$. Moreover, we assume the vertices are labeled from left to right in natural numerical order:
\[
%\xymatrixrowsep{10pt}\xymatrixcolsep{10pt}
\xymatrix{%begin xy matrix
\tilde{\mathbb{A}}_n:&1 \ar[r]\ar@/^1.5pc/[rrrr]&2\ar[r]&\quad \cdots\quad  \ar[r]& n\ar[r] & n+1
	}%end xy matrix
\]
By \textbf{tame}, we mean there are infinitely many indecomposable $\Bbbk Q$-modules and for all $n\in\mathbb{N}$, all but finitely many isomorphism classes of $n$-dimensional indecomposables occur in a finite number of one-parameter families. It is known that the module category, hence the Auslander--Reiten quiver $\Gamma_{\Bbbk Q}$, of a tame hereditary algebra can be partitioned into three sections: the preprojective, regular and preinjective components. For $\mathbb{\tilde{A}}_n$ quivers, the regular component consists of the left, right and homogeneous tubes. Moreover, the path algebras are also string algebras, simplifying the aforementioned tripartite classification of $\Gamma_{\Bbbk Q}$. For more on representation theory of tame algebras and definitions of these components see [\ref{ref: blue book}]. For more on string algebras see [\ref{ref: String Algebra Info}]. \\

It is well known that the indecomposable modules over string algebras are either string or band modules. For a quiver $Q$ whose path algebra is a string algebra, to define string modules, we first define for $a\in Q_1$ a \textbf{formal inverse} $a^{-1}$, such that $s(a^{-1}) = t(a)$ and $t(a^{-1}) = s(a)$. Let $Q_1^{-1}$ denote the set of formal inverses of arrows in $Q_1$. We define a \textbf{walk} as a sequence $w = w_0\cdots w_r$ such that for all $i\in\{1,\dots,r\}$, we have $t(w_{i-1}) = s(w_{i})$ where $w_i \in Q_1 \cup Q_1^{-1}$. We also consider walks of length 0 at a vertex $i$, given by the lazy path $e_i$. A \textbf{string} is a walk $w$ without consecutive arrows $aa^{-1}$ or $a^{-1}a$. A \textbf{band} $b = b_1 \cdots b_n$ is a cyclic string, that is, $t(b_n) = s(b_1)$. For quivers of type $\mathbb{\tilde{A}}_n$, we take the convention that all named strings move in the counterclockwise direction around $Q$. Moreover, the band modules lie in the homogeneous tubes and we can classify in which component of the Auslander--Reiten quiver the string modules reside by their shape, as we will soon see. We denote by $ij_k$ the string module of length $k$ associated to the walk $w_1\cdots w_{k-1}$ where $s(w_1)=i+1$ and $t(w_{k-1})=j$. For $k=1$, $(j-1)j_1$ denotes the simple module at vertex $j$. Note that beginning at the previous vertex is not the usual convention, however it agrees with our other notation, such as $M_{ij}$ for the module with support $[i+1,j]$. When drawing the strings, we take the convention that the head of each arrow is at the bottom as seen in Example \ref{exam: example of string module}. String modules are uniquely determined up to isomorphism by their walk $w$. So, we denote them by their string $w$. Band modules, which are in fact string modules, are not uniquely determined by their walk, but we nevertheless denote them by their walks.\\

\begin{exmp}   \label{exam: example of string module}

Let $Q$ be the following quiver.

\[
%\xymatrixrowsep{10pt}\xymatrixcolsep{10pt}
\xymatrix{%begin xy matrix
1 \ar_{\alpha_1}[r] \ar@/^2pc/^{\alpha_4}[rrr] & 2 \ar_{\alpha_2}[r] & 3 \ar_{\alpha_3}[r] & 4 
	}%end xy matrix
\]

Consider the walk $\alpha_3\alpha_4^{-1}\alpha_1\alpha_2\alpha_3$. Then the string module associated to this walk is $24_6$, and we draw its graph as follows.

\[
%\xymatrixrowsep{10pt}\xymatrixcolsep{10pt}
\xymatrix{%begin xy matrix
& & & & 1 \ar[dddl] \ar[dr] & & & \\ & 2 \ar[dr] & &  & & 2 \ar[dr] & & \\ & & 3 \ar[dr] & & & & 3\ar[dr] & \\ & & & 4 & & & & 4
	}%end xy matrix
\]

The representation associated to this string is the following.

\[
%\xymatrixrowsep{10pt}\xymatrixcolsep{10pt}
\xymatrix{%begin xy matrix
\Bbbk \ar_{1}[r] \ar@/^2pc/^{\begin{bmatrix} 0 \\ 1\end{bmatrix}}[rrr] & \Bbbk \ar_{\begin{bmatrix} 1 \\ 0\end{bmatrix}}[r] & \Bbbk^2 \ar_{\begin{bmatrix} 1 & 0 \\ 0 & 1\end{bmatrix}}[r] & \Bbbk^2 
	}%end xy matrix
\]

\end{exmp}

Let $w = w_0\cdots w_r$ be an indecomposable $\Bbbk \mathbb{\tilde{A}}_n$-module. It is well known that $w$ is \textbf{preprojective} if and only if there are arrows $\alpha,\beta\in Q_1$ such that $t(\alpha) = s(w_0)$ and $t(\beta) = t(w_r)$. Similarly, $w$ is \textbf{preinjective} if and only if there are arrows $\alpha,\beta\in Q_1$ such that $s(\alpha) = s(w_0)$ and $s(\beta) = t(w_r)$, $w$ is \textbf{left regular} if and only if there are arrows $\alpha,\beta\in Q_1$ such that $t(\alpha) = s(w_0)$ and $s(\beta) = t(w_r)$, $w$ is \textbf{right regular} if and only if there are arrows $\alpha,\beta\in Q_1$ such that $s(\alpha) = s(w_0)$ and $t(\beta) = t(w_r)$ and finally $w$ is \textbf{homogeneous} if and only if $w$ is a band. For instance, the $34_6$ string is a preprojective module over the path algebra of the quiver in Example \ref{exam: example of string module}.

\subsection{Cluster Combinatorics in type $\tilde{\mathbb{A}}_n$}\label{sec: cluster combinatorics}

\indent 

It is known that clusters in type $\tilde{\mathbb{A}}_n$ are in bijection with triangulations of an annulus associated to the quiver $Q$ [\ref{ref: clusters are in bijection with triangulations}]. In this section, we will provide this bijection in the case when $Q$ is of type $\tilde{\mathbb{A}}_n$ with straight orientation. A detailed exposition of the general case is given in [\ref{ref: Master's Thesis Clusters and Triangulations}]. \\

To a quiver $Q^{\bm{\varepsilon}}$ of type $\tilde{\mathbb{A}}_n$, we associate an annulus $A_{Q^{\bm{\varepsilon}}}$ as follows. If $\varepsilon_i = +(-)$, then $i$ is a marked point  on the outer (inner) circle of the annulus. We moreover write the vertices in clockwise order respecting the natural numerical order of the vertices. We adopt the convention that $i \in \{0,1,\dots,n\}$ where we identify $n+1$ and $0$. Since $Q^{\bm\varepsilon}$ has straight orientation in this subsection, it has $n$ arrows pointed in the positive direction and only one pointed in the negative direction. The associated annulus has $n$ marked points on the outer circle labeled $1$ through $n$ and one marked point on the inner circle labeled $0$. \\

Recall that the universal cover of the annulus $\mathbb R/n\mathbb Z\times [0,1]$ is $\mathbb{R}\times [0,1]$. Consider the $n$ marked points $i=1,\dots,n$ on the outer boundary of the annulus. Then for each $i$, we have a set of marked points $(i-.5 +n\mathbb{Z})\times \{1\}$ on the upper boundary of the cover, $\mathbb{R} \times \{1\}$, where the point $(i-.5+kn,1)$ is labeled ``$i+nk$''. The single marked point 0 on the inner boundary of the annulus lifts to the set of marked points $n\mathbb{Z}\times \{0\}$ on the lower boundary of the cover, $\mathbb{R} \times \{0\}$ where the point $(kn,0)$ is marked ``$k$''. An example of an annulus and its universal cover can be seen in Figure \ref{fig:example of cover}. We now fix a triangulation of the cover consisting of the strands $X_i$ which are straight line segments connecting the marked points $(i-1,1)$ and $(n,0)=$``1'' for $i = 1, 2, \dots, n+1$. Thus $X_{n+1}$ is the vertical line segment connecting $(n,1)$ and $(n,0)$. Note that we take these strands modulo $n+1$ and the strand $X_{n+1}$ occurs twice in the fundamental lift of the associated annulus as in Figure \ref{fig:example of cover}. As done in [\ref{ref: Master's Thesis Clusters and Triangulations}], we call the $X_i$ \textbf{vertical strands}. \\

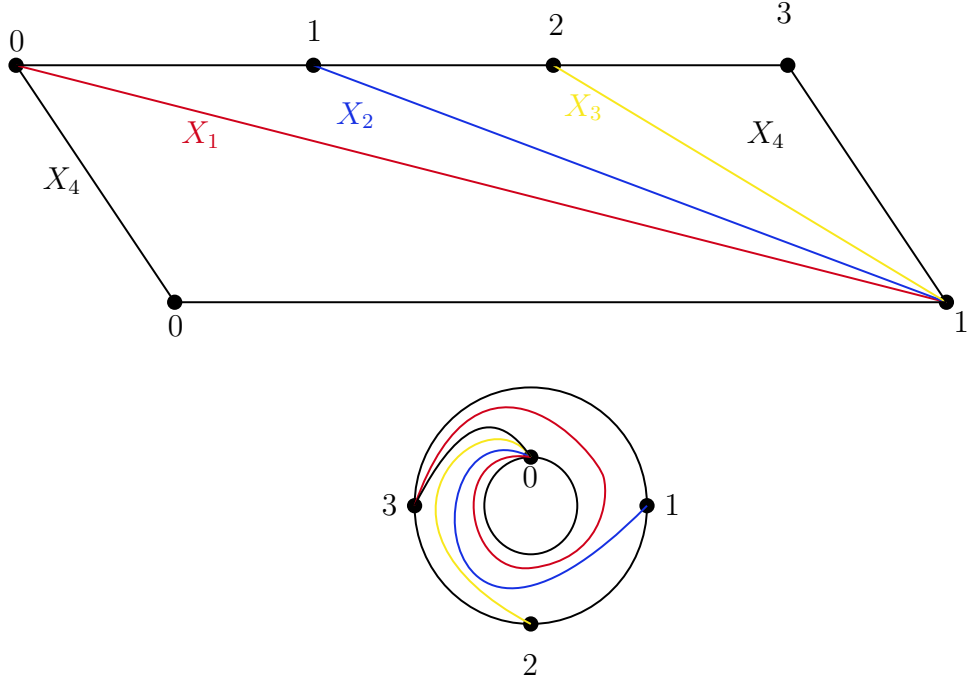
\begin{figure}[h!]
    \centering

\tikzset{every picture/.style={line width=0.75pt}} %set default line width to 0.75pt        

\begin{tikzpicture}[x=0.75pt,y=0.75pt,yscale=-1,xscale=1]
%uncomment if require: \path (0,430); %set diagram left start at 0, and has height of 430

%Straight Lines [id:da4861311917791813] 
\draw    (150,170) ;
\draw [shift={(150,170)}, rotate = 0] [color={rgb, 255:red, 0; green, 0; blue, 0 }  ][fill={rgb, 255:red, 0; green, 0; blue, 0 }  ][line width=0.75]      (0, 0) circle [x radius= 3.35, y radius= 3.35]   ;
%Straight Lines [id:da9937609376510941] 
\draw    (539.22,170) ;
\draw [shift={(539.22,170)}, rotate = 0] [color={rgb, 255:red, 0; green, 0; blue, 0 }  ][fill={rgb, 255:red, 0; green, 0; blue, 0 }  ][line width=0.75]      (0, 0) circle [x radius= 3.35, y radius= 3.35]   ;
%Straight Lines [id:da416847974599446] 
\draw    (70,50.39) ;
\draw [shift={(70,50.39)}, rotate = 0] [color={rgb, 255:red, 0; green, 0; blue, 0 }  ][fill={rgb, 255:red, 0; green, 0; blue, 0 }  ][line width=0.75]      (0, 0) circle [x radius= 3.35, y radius= 3.35]   ;
%Straight Lines [id:da022305296310460854] 
\draw    (220,50.39) ;
\draw [shift={(220,50.39)}, rotate = 0] [color={rgb, 255:red, 0; green, 0; blue, 0 }  ][fill={rgb, 255:red, 0; green, 0; blue, 0 }  ][line width=0.75]      (0, 0) circle [x radius= 3.35, y radius= 3.35]   ;
%Straight Lines [id:da5668023687379087] 
\draw    (341,50.39) ;
\draw [shift={(341,50.39)}, rotate = 0] [color={rgb, 255:red, 0; green, 0; blue, 0 }  ][fill={rgb, 255:red, 0; green, 0; blue, 0 }  ][line width=0.75]      (0, 0) circle [x radius= 3.35, y radius= 3.35]   ;
%Straight Lines [id:da07833635485659851] 
\draw    (459.22,50.39) ;
\draw [shift={(459.22,50.39)}, rotate = 0] [color={rgb, 255:red, 0; green, 0; blue, 0 }  ][fill={rgb, 255:red, 0; green, 0; blue, 0 }  ][line width=0.75]      (0, 0) circle [x radius= 3.35, y radius= 3.35]   ;
%Straight Lines [id:da8431905388447023] 
\draw [color={rgb, 255:red, 208; green, 2; blue, 27 }  ,draw opacity=1 ]   (70,50.39) -- (539.22,170) ;
%Straight Lines [id:da49286480958831236] 
\draw [color={rgb, 255:red, 22; green, 48; blue, 226 }  ,draw opacity=1 ]   (220,50.39) -- (539.22,170) ;
%Straight Lines [id:da6066979852189072] 
\draw [color={rgb, 255:red, 248; green, 231; blue, 28 }  ,draw opacity=1 ]   (341,50.39) -- (539.22,170) ;
%Shape: Donut [id:dp7631907662600308] 
\draw   (306.17,272.7) .. controls (306.17,259.15) and (316.66,248.17) .. (329.61,248.17) .. controls (342.56,248.17) and (353.06,259.15) .. (353.06,272.7) .. controls (353.06,286.24) and (342.56,297.23) .. (329.61,297.23) .. controls (316.66,297.23) and (306.17,286.24) .. (306.17,272.7)(271,272.7) .. controls (271,239.73) and (297.24,213) .. (329.61,213) .. controls (361.98,213) and (388.22,239.73) .. (388.22,272.7) .. controls (388.22,305.67) and (361.98,332.39) .. (329.61,332.39) .. controls (297.24,332.39) and (271,305.67) .. (271,272.7) ;
%Straight Lines [id:da10048872410712306] 
\draw    (329.61,248.17) ;
\draw [shift={(329.61,248.17)}, rotate = 0] [color={rgb, 255:red, 0; green, 0; blue, 0 }  ][fill={rgb, 255:red, 0; green, 0; blue, 0 }  ][line width=0.75]      (0, 0) circle [x radius= 3.35, y radius= 3.35]   ;
%Straight Lines [id:da7141853242138638] 
\draw    (388.22,272.7) ;
\draw [shift={(388.22,272.7)}, rotate = 0] [color={rgb, 255:red, 0; green, 0; blue, 0 }  ][fill={rgb, 255:red, 0; green, 0; blue, 0 }  ][line width=0.75]      (0, 0) circle [x radius= 3.35, y radius= 3.35]   ;
%Straight Lines [id:da6194762113278953] 
\draw    (329.61,332.39) ;
\draw [shift={(329.61,332.39)}, rotate = 0] [color={rgb, 255:red, 0; green, 0; blue, 0 }  ][fill={rgb, 255:red, 0; green, 0; blue, 0 }  ][line width=0.75]      (0, 0) circle [x radius= 3.35, y radius= 3.35]   ;
%Straight Lines [id:da4002498193652062] 
\draw    (271,272.7) ;
\draw [shift={(271,272.7)}, rotate = 0] [color={rgb, 255:red, 0; green, 0; blue, 0 }  ][fill={rgb, 255:red, 0; green, 0; blue, 0 }  ][line width=0.75]      (0, 0) circle [x radius= 3.35, y radius= 3.35]   ;
%Curve Lines [id:da16725233698567865] 
\draw [color={rgb, 255:red, 22; green, 48; blue, 226 }  ,draw opacity=1 ]   (329.61,248.17) .. controls (270.35,218.39) and (272.35,390.39) .. (388.22,272.7) ;
%Curve Lines [id:da06720317488177208] 
\draw [color={rgb, 255:red, 248; green, 231; blue, 28 }  ,draw opacity=1 ]   (329.61,248.17) .. controls (306.35,214.39) and (234.35,282.39) .. (329.61,332.39) ;
%Curve Lines [id:da5699138262636625] 
\draw [color={rgb, 255:red, 208; green, 2; blue, 27 }  ,draw opacity=1 ]   (271,272.7) .. controls (304.35,177.39) and (363.35,246.39) .. (366.35,258.39) .. controls (369.35,270.39) and (364.84,299.84) .. (330.03,304.11) .. controls (295.22,308.39) and (287.22,241.39) .. (329.61,248.17) ;
%Straight Lines [id:da7077447037442133] 
\draw    (70,50.39) -- (150,170) ;
%Straight Lines [id:da11381734848977576] 
\draw    (459.22,50.39) -- (539.22,170) ;
%Straight Lines [id:da16191793045109648] 
\draw    (70,50.39) -- (459.22,50.39) ;
%Straight Lines [id:da2347248766900747] 
\draw    (150,170) -- (539.22,170) ;
%Curve Lines [id:da25288716033039194] 
\draw    (271,272.7) .. controls (284.35,246.39) and (306.35,213.39) .. (329.61,248.17) ;

% Text Node
\draw (65,31.4) node [anchor=north west][inner sep=0.75pt]    {$0$};
% Text Node
\draw (145,175.4) node [anchor=north west][inner sep=0.75pt]    {$0$};
% Text Node
\draw (215,26.4) node [anchor=north west][inner sep=0.75pt]    {$1$};
% Text Node
\draw (337,23.4) node [anchor=north west][inner sep=0.75pt]    {$2$};
% Text Node
\draw (452,17.4) node [anchor=north west][inner sep=0.75pt]    {$3$};
% Text Node
\draw (541.22,173.4) node [anchor=north west][inner sep=0.75pt]    {$1$};
% Text Node
\draw (82,100.4) node [anchor=north west][inner sep=0.75pt]    {$X_{4}$};
% Text Node
\draw (437,77.4) node [anchor=north west][inner sep=0.75pt]    {$X_{4}$};
% Text Node
\draw (152,77.4) node [anchor=north west][inner sep=0.75pt]  [color={rgb, 255:red, 208; green, 2; blue, 27 }  ,opacity=1 ]  {$X_{1}$};
% Text Node
\draw (230,67.4) node [anchor=north west][inner sep=0.75pt]  [color={rgb, 255:red, 22; green, 48; blue, 226 }  ,opacity=1 ]  {$X_{2}$};
% Text Node
\draw (345,63.4) node [anchor=north west][inner sep=0.75pt]  [color={rgb, 255:red, 248; green, 231; blue, 28 }  ,opacity=1 ]  {$X_{3}$};
% Text Node
\draw (324,251.4) node [anchor=north west][inner sep=0.75pt]    {$0$};
% Text Node
\draw (395.61,265.57) node [anchor=north west][inner sep=0.75pt]    {$1$};
% Text Node
\draw (324,346.4) node [anchor=north west][inner sep=0.75pt]    {$2$};
% Text Node
\draw (253,265.4) node [anchor=north west][inner sep=0.75pt]    {$3$};

\end{tikzpicture}

    \caption{A fundamental domain for the universal cover of the annulus associated to straightly oriented $Q^{\bm\varepsilon}$ of type $\tilde{\mathbb{A}}_3$ from Example \ref{exam: example of string module}, with the triangulation by vertical strands.}
    \label{fig:example of cover}
\end{figure}

\begin{defn}
Let $i,j\in\{0,1,\dots , n-1\}$ be such that $i\neq j$. An \textbf{arc} $a(i,j)[\lambda]$ on $A_{Q^{\bm{\varepsilon}}}$ is an isotopy class of simple curves in $A_{Q^{\bm{\varepsilon}}}$ where any $\gamma \in a(i,j)[\lambda]$ satisfies: 
\begin{enumerate}
\item $\gamma$ begins at $i$ and ends at $j$, 
\item $\gamma$ travels clockwise through the interior of the annulus from $i$ to $j$. 
\item If $\gamma$ begins and ends on the same boundary component, $\gamma$ is called an \textbf{exterior or boundary} arc and the integer $\lambda$ is the winding number of $\gamma$ about the inner boundary component. If $\gamma$ connects the two boundary components, $\gamma$ is called a \textbf{bridging arc} and $\lambda$ is the clockwise winding number of $\gamma$ about the inner boundary circle of $A_{Q^{\bm{\varepsilon}}}$ when traversing $\gamma$ beginning from the outer boundary component. If an arc from $i$ to $j$ has counter clockwise winding number $k$, we write $a(i,j)[-k]$.
\end{enumerate}
A collection of such arcs will be called an $\textbf{arc diagram}$.
\end{defn}

\begin{exmp}\label{exmp: naming of arcs}
Let $Q^{\bm\varepsilon}$ of type $\tilde{\mathbb{A}}_3$ from Example \ref{exam: example of string module} and consider the arc diagram in Figure \ref{fig:example of cover}. Then the red arc is $a(3,0)[1]$, the blue arc is $a(1,0)[0]$, the yellow arc is $a(2,0)[0]$, and the black arc is $a(3,0)[0]$. Below are two more arc diagrams for the same quiver with the appropriate labels.
\begin{center}

\tikzset{every picture/.style={line width=0.75pt}} %set default line width to 0.75pt        

\begin{tikzpicture}[x=0.75pt,y=0.75pt,yscale=-1,xscale=1]
%uncomment if require: \path (0,300); %set diagram left start at 0, and has height of 300

%Shape: Donut [id:dp1774566383759466] 
\draw   (106.57,124.6) .. controls (106.57,106.57) and (121.4,91.96) .. (139.7,91.96) .. controls (158,91.96) and (172.84,106.57) .. (172.84,124.6) .. controls (172.84,142.62) and (158,157.23) .. (139.7,157.23) .. controls (121.4,157.23) and (106.57,142.62) .. (106.57,124.6)(57.61,124.6) .. controls (57.61,79.53) and (94.37,43) .. (139.7,43) .. controls (185.04,43) and (221.79,79.53) .. (221.79,124.6) .. controls (221.79,169.66) and (185.04,206.19) .. (139.7,206.19) .. controls (94.37,206.19) and (57.61,169.66) .. (57.61,124.6) ;
%Straight Lines [id:da8275392784069286] 
\draw    (139.7,91.07) ;
\draw [shift={(139.7,91.07)}, rotate = 0] [color={rgb, 255:red, 0; green, 0; blue, 0 }  ][fill={rgb, 255:red, 0; green, 0; blue, 0 }  ][line width=0.75]      (0, 0) circle [x radius= 3.35, y radius= 3.35]   ;
%Straight Lines [id:da3025941493740547] 
\draw    (221.79,124.6) ;
\draw [shift={(221.79,124.6)}, rotate = 0] [color={rgb, 255:red, 0; green, 0; blue, 0 }  ][fill={rgb, 255:red, 0; green, 0; blue, 0 }  ][line width=0.75]      (0, 0) circle [x radius= 3.35, y radius= 3.35]   ;
%Straight Lines [id:da7846719094970487] 
\draw    (139.7,206.19) ;
\draw [shift={(139.7,206.19)}, rotate = 0] [color={rgb, 255:red, 0; green, 0; blue, 0 }  ][fill={rgb, 255:red, 0; green, 0; blue, 0 }  ][line width=0.75]      (0, 0) circle [x radius= 3.35, y radius= 3.35]   ;
%Straight Lines [id:da7117583733001756] 
\draw    (57.61,124.6) ;
\draw [shift={(57.61,124.6)}, rotate = 0] [color={rgb, 255:red, 0; green, 0; blue, 0 }  ][fill={rgb, 255:red, 0; green, 0; blue, 0 }  ][line width=0.75]      (0, 0) circle [x radius= 3.35, y radius= 3.35]   ;
%Shape: Donut [id:dp8845714764593651] 
\draw   (396.57,124.6) .. controls (396.57,106.57) and (411.4,91.96) .. (429.7,91.96) .. controls (448,91.96) and (462.84,106.57) .. (462.84,124.6) .. controls (462.84,142.62) and (448,157.23) .. (429.7,157.23) .. controls (411.4,157.23) and (396.57,142.62) .. (396.57,124.6)(347.61,124.6) .. controls (347.61,79.53) and (384.37,43) .. (429.7,43) .. controls (475.04,43) and (511.79,79.53) .. (511.79,124.6) .. controls (511.79,169.66) and (475.04,206.19) .. (429.7,206.19) .. controls (384.37,206.19) and (347.61,169.66) .. (347.61,124.6) ;
%Straight Lines [id:da02695075973908101] 
\draw    (429.7,91.07) ;
\draw [shift={(429.7,91.07)}, rotate = 0] [color={rgb, 255:red, 0; green, 0; blue, 0 }  ][fill={rgb, 255:red, 0; green, 0; blue, 0 }  ][line width=0.75]      (0, 0) circle [x radius= 3.35, y radius= 3.35]   ;
%Straight Lines [id:da4712386858701867] 
\draw    (511.79,124.6) ;
\draw [shift={(511.79,124.6)}, rotate = 0] [color={rgb, 255:red, 0; green, 0; blue, 0 }  ][fill={rgb, 255:red, 0; green, 0; blue, 0 }  ][line width=0.75]      (0, 0) circle [x radius= 3.35, y radius= 3.35]   ;
%Straight Lines [id:da43789327813441514] 
\draw    (429.7,206.19) ;
\draw [shift={(429.7,206.19)}, rotate = 0] [color={rgb, 255:red, 0; green, 0; blue, 0 }  ][fill={rgb, 255:red, 0; green, 0; blue, 0 }  ][line width=0.75]      (0, 0) circle [x radius= 3.35, y radius= 3.35]   ;
%Straight Lines [id:da518962329428948] 
\draw    (347.61,124.6) ;
\draw [shift={(347.61,124.6)}, rotate = 0] [color={rgb, 255:red, 0; green, 0; blue, 0 }  ][fill={rgb, 255:red, 0; green, 0; blue, 0 }  ][line width=0.75]      (0, 0) circle [x radius= 3.35, y radius= 3.35]   ;
%Curve Lines [id:da46563363602088437] 
\draw [color={rgb, 255:red, 248; green, 231; blue, 28 }  ,draw opacity=1 ]   (429.7,91.07) .. controls (447.35,83.39) and (511.35,111.39) .. (429.7,205.3) ;
%Curve Lines [id:da4499687805090202] 
\draw    (139.7,91.96) .. controls (212.35,73.39) and (193.35,242.39) .. (57.61,124.6) ;
%Curve Lines [id:da2664657423964152] 
\draw [color={rgb, 255:red, 208; green, 2; blue, 27 }  ,draw opacity=1 ]   (429.7,91.07) .. controls (466.77,63.27) and (494.09,123.34) .. (480.39,147.41) .. controls (466.69,171.49) and (447.4,176.73) .. (427.42,177.43) .. controls (407.44,178.13) and (382.95,165.92) .. (374.65,146.16) .. controls (366.35,126.39) and (368.4,111.29) .. (374.36,97.84) .. controls (380.33,84.39) and (442.29,33.44) .. (488.32,96.41) .. controls (534.35,159.39) and (454.35,191.66) .. (428.85,193.53) .. controls (403.35,195.39) and (372.35,173.39) .. (347.61,124.6) ;
%Curve Lines [id:da8731480645221643] 
\draw [color={rgb, 255:red, 22; green, 48; blue, 226 }  ,draw opacity=1 ]   (139.7,91.96) .. controls (179.7,61.96) and (210.35,106.39) .. (221.79,124.6) ;

% Text Node
\draw (134.25,98.27) node [anchor=north west][inner sep=0.75pt]    {$0$};
% Text Node
\draw (234.55,117.64) node [anchor=north west][inner sep=0.75pt]    {$1$};
% Text Node
\draw (134.25,216.12) node [anchor=north west][inner sep=0.75pt]    {$2$};
% Text Node
\draw (34.8,117.41) node [anchor=north west][inner sep=0.75pt]    {$3$};
% Text Node
\draw (424.25,98.27) node [anchor=north west][inner sep=0.75pt]    {$0$};
% Text Node
\draw (524.55,117.64) node [anchor=north west][inner sep=0.75pt]    {$1$};
% Text Node
\draw (424.25,215.12) node [anchor=north west][inner sep=0.75pt]    {$2$};
% Text Node
\draw (324.8,117.41) node [anchor=north west][inner sep=0.75pt]    {$3$};
% Text Node
\draw (517,147.4) node [anchor=north west][inner sep=0.75pt]  [color={rgb, 255:red, 248; green, 231; blue, 28 }  ,opacity=1 ]  {$a( 0,2)[ 0]$};
% Text Node
\draw (82,166.4) node [anchor=north west][inner sep=0.75pt]  [color={rgb, 255:red, 0; green, 0; blue, 0 }  ,opacity=1 ]  {$a( 0,3)[ 0]$};
% Text Node
\draw (507,74.4) node [anchor=north west][inner sep=0.75pt]  [color={rgb, 255:red, 208; green, 2; blue, 27 }  ,opacity=1 ]  {$a( 0,3)[ -1]$};
% Text Node
\draw (128,59.4) node [anchor=north west][inner sep=0.75pt]  [color={rgb, 255:red, 22; green, 48; blue, 226 }  ,opacity=1 ]  {$a( 0,1)[ 0]$};

\end{tikzpicture}

\end{center}
\end{exmp}

We say that two arcs $a(i_1,j_1)[\lambda_1]$ and $a(i_2,j_2)[\lambda_2]$ \textbf{intersect nontrivially} if any two curves $\gamma_1 \in a(i_{1},j_{1})[\lambda_1]$ and $\gamma_2 \in a(i_2,j_2)[\lambda_2]$ intersect in their interiors, as in the annulus on the right of Example \ref{exmp: naming of arcs}. Otherwise we say that $a(i_1,j_1)[\lambda_1]$ and $a(i_2,j_2)[\lambda_2]$ \textbf{do not intersect nontrivially}. Now consider any arc $a(i,j)[\lambda]$ on the annulus which does not self intersect nontrivially. This arc lifts to a strand in the universal cover which either:
\begin{enumerate}
    \item intersects some finite subset of vertical strands,
    \item is one of the $n+1$ vertical strands: $X_1,X_2,\dots, X_{n+1}$, or
    \item is one of the $n+1$ \textbf{boundary strands}; that is, the arc on the annulus is of the form $a(1,2),\dots,a(n-1,n)$, $a(n,1)$ or $a(0,0)$.
\end{enumerate} We will now define a map $\varphi$ that associates to each strand in the universal cover of $A_{Q^{\bm\varepsilon}}$, a $\Bbbk Q$ string module as follows. If it is a vertical strand or boundary strand, $\varphi$ assigns to it the $0$ module. If it is not, suppose that when ordered from left to right the subset of vertical strands the lifted arc intersects is $\{X_i, X_{i+1}, \dots, X_j\}$ where subscripts are taken modulo $n+1$, where instead of $X_0$, we write $X_{n+1}$. Then to this lifted arc, $\varphi$ assigns the string module $(i-1) j_{j-i+1}$. This convention of associating modules and arcs will be called the \textbf{cluster convention}. As was shown in [\ref{ref: Master's Thesis Clusters and Triangulations}], $\varphi$ forms a bijection between non-vertical, non-boundary strands on the annulus and indecomposable $\Bbbk Q$-modules.\\

A \textbf{triangulation} of the annulus $A_{Q^{\bm{\varepsilon}}}$ is a maximal collection of arcs with no nontrivial intersection such that each arc has no nontrivial self-intersections, no intersection with the set of marked points or the boundary of the annulus except at its endpoints, and is not contractible to a subset of the boundary of the annulus, i.e., is not a boundary arc. Note that a collection of arcs is maximal if and only if it contains $|Q^{\bm{\varepsilon}}_0|$ arcs. Using the aforementioned bijection, we can now get a cluster from any triangulation of the annulus and any triangulation of the annulus from a cluster as follows. An arbitrary triangulation of the annulus will consist of $l\ge0$ vertical strands, say $\{X_{j_1}, X_{j_2}, \dots, X_{j_l}\}$, and $n+1 - l$ non-vertical strands, say $\{Y_{l+1}, Y_{l+2}, \dots, Y_{n+1}\}$. The cluster corresponding to this triangulation is $\varphi(Y_{l+1}) \oplus \dots \oplus \varphi(Y_{n+1}) \oplus P_{j_1}[1]\ \oplus \dots \oplus P_{j_l}[1]$ where $P_i[1]$ is the shifted projective at vertex $i$. As shown in [\ref{ref: Master's Thesis Clusters and Triangulations}], this association is a bijection. 

\begin{figure}[h!] \label{fig: triangle/cluser bijection}

\begin{center}

\tikzset{every picture/.style={line width=0.75pt}} %set default line width to 0.75pt        

\begin{tikzpicture}[x=0.75pt,y=0.75pt,yscale=-1,xscale=1]
%uncomment if require: \path (0,473); %set diagram left start at 0, and has height of 473

%Straight Lines [id:da6842263407295943] 
\draw    (176,160) ;
\draw [shift={(176,160)}, rotate = 0] [color={rgb, 255:red, 0; green, 0; blue, 0 }  ][fill={rgb, 255:red, 0; green, 0; blue, 0 }  ][line width=0.75]      (0, 0) circle [x radius= 3.35, y radius= 3.35]   ;
%Straight Lines [id:da7144189242100216] 
\draw    (565.22,160) ;
\draw [shift={(565.22,160)}, rotate = 0] [color={rgb, 255:red, 0; green, 0; blue, 0 }  ][fill={rgb, 255:red, 0; green, 0; blue, 0 }  ][line width=0.75]      (0, 0) circle [x radius= 3.35, y radius= 3.35]   ;
%Straight Lines [id:da6992140613490399] 
\draw    (96,40.39) ;
\draw [shift={(96,40.39)}, rotate = 0] [color={rgb, 255:red, 0; green, 0; blue, 0 }  ][fill={rgb, 255:red, 0; green, 0; blue, 0 }  ][line width=0.75]      (0, 0) circle [x radius= 3.35, y radius= 3.35]   ;
%Straight Lines [id:da6206421381684553] 
\draw    (246,40.39) ;
\draw [shift={(246,40.39)}, rotate = 0] [color={rgb, 255:red, 0; green, 0; blue, 0 }  ][fill={rgb, 255:red, 0; green, 0; blue, 0 }  ][line width=0.75]      (0, 0) circle [x radius= 3.35, y radius= 3.35]   ;
%Straight Lines [id:da25784700276473815] 
\draw    (367,40.39) ;
\draw [shift={(367,40.39)}, rotate = 0] [color={rgb, 255:red, 0; green, 0; blue, 0 }  ][fill={rgb, 255:red, 0; green, 0; blue, 0 }  ][line width=0.75]      (0, 0) circle [x radius= 3.35, y radius= 3.35]   ;
%Straight Lines [id:da2469577658208968] 
\draw    (485.22,40.39) ;
\draw [shift={(485.22,40.39)}, rotate = 0] [color={rgb, 255:red, 0; green, 0; blue, 0 }  ][fill={rgb, 255:red, 0; green, 0; blue, 0 }  ][line width=0.75]      (0, 0) circle [x radius= 3.35, y radius= 3.35]   ;
%Straight Lines [id:da06207686051248662] 
\draw [color={rgb, 255:red, 0; green, 0; blue, 0 }  ,draw opacity=1 ]   (96,40.39) -- (384.95,114.05) -- (565.22,160) ;
%Straight Lines [id:da7301814007549321] 
\draw [color={rgb, 255:red, 144; green, 19; blue, 254 }  ,draw opacity=1 ]   (246,40.39) -- (565.22,160) ;
%Straight Lines [id:da3042341437748963] 
\draw [color={rgb, 255:red, 0; green, 0; blue, 0 }  ,draw opacity=1 ]   (367,40.39) -- (565.22,160) ;
%Shape: Donut [id:dp5019164641607972] 
\draw   (103.58,346.18) .. controls (103.58,330.87) and (115.06,318.46) .. (129.22,318.46) .. controls (143.38,318.46) and (154.86,330.87) .. (154.86,346.18) .. controls (154.86,361.49) and (143.38,373.9) .. (129.22,373.9) .. controls (115.06,373.9) and (103.58,361.49) .. (103.58,346.18)(65.13,346.18) .. controls (65.13,309.63) and (93.82,280) .. (129.22,280) .. controls (164.62,280) and (193.32,309.63) .. (193.32,346.18) .. controls (193.32,382.73) and (164.62,412.35) .. (129.22,412.35) .. controls (93.82,412.35) and (65.13,382.73) .. (65.13,346.18) ;
%Straight Lines [id:da24658238270730282] 
\draw    (129.22,318.98) ;
\draw [shift={(129.22,318.98)}, rotate = 0] [color={rgb, 255:red, 0; green, 0; blue, 0 }  ][fill={rgb, 255:red, 0; green, 0; blue, 0 }  ][line width=0.75]      (0, 0) circle [x radius= 3.35, y radius= 3.35]   ;
%Straight Lines [id:da15289087243963873] 
\draw    (193.32,346.18) ;
\draw [shift={(193.32,346.18)}, rotate = 0] [color={rgb, 255:red, 0; green, 0; blue, 0 }  ][fill={rgb, 255:red, 0; green, 0; blue, 0 }  ][line width=0.75]      (0, 0) circle [x radius= 3.35, y radius= 3.35]   ;
%Straight Lines [id:da7125123359235737] 
\draw    (129.22,412.35) ;
\draw [shift={(129.22,412.35)}, rotate = 0] [color={rgb, 255:red, 0; green, 0; blue, 0 }  ][fill={rgb, 255:red, 0; green, 0; blue, 0 }  ][line width=0.75]      (0, 0) circle [x radius= 3.35, y radius= 3.35]   ;
%Straight Lines [id:da7775501414600987] 
\draw    (65.13,346.18) ;
\draw [shift={(65.13,346.18)}, rotate = 0] [color={rgb, 255:red, 0; green, 0; blue, 0 }  ][fill={rgb, 255:red, 0; green, 0; blue, 0 }  ][line width=0.75]      (0, 0) circle [x radius= 3.35, y radius= 3.35]   ;
%Curve Lines [id:da8648167457689209] 
\draw [color={rgb, 255:red, 144; green, 19; blue, 254 }  ,draw opacity=1 ]   (129.22,318.98) .. controls (64.42,285.98) and (81.91,447.83) .. (193.32,346.18) ;
%Curve Lines [id:da6790624520474722] 
\draw [color={rgb, 255:red, 248; green, 231; blue, 28 }  ,draw opacity=1 ]   (65.13,346.18) .. controls (69.89,356.93) and (131.35,434.39) .. (193.32,346.18) ;
%Straight Lines [id:da5170020521152001] 
\draw [color={rgb, 255:red, 144; green, 19; blue, 254 }  ,draw opacity=1 ]   (96,40.39) -- (176,160) ;
%Straight Lines [id:da5559521538155487] 
\draw [color={rgb, 255:red, 144; green, 19; blue, 254 }  ,draw opacity=1 ]   (485.22,40.39) -- (565.22,160) ;
%Straight Lines [id:da8064100465130146] 
\draw    (96,40.39) -- (485.22,40.39) ;
%Straight Lines [id:da29384017507844984] 
\draw    (176,160) -- (565.22,160) ;
%Curve Lines [id:da8119670499958127] 
\draw [color={rgb, 255:red, 144; green, 19; blue, 254 }  ,draw opacity=1 ]   (65.13,346.18) .. controls (79.73,317.02) and (103.79,280.44) .. (129.22,318.98) ;
%Curve Lines [id:da25918427167389124] 
\draw [color={rgb, 255:red, 144; green, 19; blue, 254 }  ,draw opacity=1 ]   (129.22,318.98) .. controls (155.18,295.96) and (177.05,330.32) .. (193.32,346.18) ;
%Curve Lines [id:da5083810790564298] 
\draw [color={rgb, 255:red, 248; green, 231; blue, 28 }  ,draw opacity=1 ]   (246,40.39) .. controls (339.35,63.39) and (427.35,69.39) .. (485.22,40.39) ;
%Straight Lines [id:da1587829735824875] 
\draw [color={rgb, 255:red, 144; green, 19; blue, 254 }  ,draw opacity=1 ]   (246,40.39) -- (176,160) ;
%Straight Lines [id:da3451424217766781] 
\draw    (168.56,277.8) -- (246.14,175.98) ;
\draw [shift={(247.35,174.39)}, rotate = 127.3] [color={rgb, 255:red, 0; green, 0; blue, 0 }  ][line width=0.75]    (10.93,-3.29) .. controls (6.95,-1.4) and (3.31,-0.3) .. (0,0) .. controls (3.31,0.3) and (6.95,1.4) .. (10.93,3.29)   ;
\draw [shift={(167.35,279.39)}, rotate = 307.3] [color={rgb, 255:red, 0; green, 0; blue, 0 }  ][line width=0.75]    (10.93,-3.29) .. controls (6.95,-1.4) and (3.31,-0.3) .. (0,0) .. controls (3.31,0.3) and (6.95,1.4) .. (10.93,3.29)   ;
%Straight Lines [id:da3749099832895735] 
\draw    (502.13,283.81) -- (418.57,174.98) ;
\draw [shift={(417.35,173.39)}, rotate = 52.48] [color={rgb, 255:red, 0; green, 0; blue, 0 }  ][line width=0.75]    (10.93,-3.29) .. controls (6.95,-1.4) and (3.31,-0.3) .. (0,0) .. controls (3.31,0.3) and (6.95,1.4) .. (10.93,3.29)   ;
\draw [shift={(503.35,285.39)}, rotate = 232.48] [color={rgb, 255:red, 0; green, 0; blue, 0 }  ][line width=0.75]    (10.93,-3.29) .. controls (6.95,-1.4) and (3.31,-0.3) .. (0,0) .. controls (3.31,0.3) and (6.95,1.4) .. (10.93,3.29)   ;
%Straight Lines [id:da0743713571894189] 
\draw    (442.35,325.39) -- (219.35,325.39) ;
\draw [shift={(217.35,325.39)}, rotate = 360] [color={rgb, 255:red, 0; green, 0; blue, 0 }  ][line width=0.75]    (10.93,-3.29) .. controls (6.95,-1.4) and (3.31,-0.3) .. (0,0) .. controls (3.31,0.3) and (6.95,1.4) .. (10.93,3.29)   ;
\draw [shift={(444.35,325.39)}, rotate = 180] [color={rgb, 255:red, 0; green, 0; blue, 0 }  ][line width=0.75]    (10.93,-3.29) .. controls (6.95,-1.4) and (3.31,-0.3) .. (0,0) .. controls (3.31,0.3) and (6.95,1.4) .. (10.93,3.29)   ;

% Text Node
\draw (91,21.4) node [anchor=north west][inner sep=0.75pt]    {$0$};
% Text Node
\draw (171,165.4) node [anchor=north west][inner sep=0.75pt]    {$0$};
% Text Node
\draw (241,16.4) node [anchor=north west][inner sep=0.75pt]    {$1$};
% Text Node
\draw (363,13.4) node [anchor=north west][inner sep=0.75pt]    {$2$};
% Text Node
\draw (478,7.4) node [anchor=north west][inner sep=0.75pt]    {$3$};
% Text Node
\draw (567.22,163.4) node [anchor=north west][inner sep=0.75pt]    {$1$};
% Text Node
\draw (123.65,323.39) node [anchor=north west][inner sep=0.75pt]    {$0$};
% Text Node
\draw (201.96,339.1) node [anchor=north west][inner sep=0.75pt]    {$1$};
% Text Node
\draw (123.65,428.71) node [anchor=north west][inner sep=0.75pt]    {$2$};
% Text Node
\draw (46.01,338.91) node [anchor=north west][inner sep=0.75pt]    {$3$};
% Text Node
\draw (104,388.4) node [anchor=north west][inner sep=0.75pt]  [font=\small,color={rgb, 255:red, 248; green, 231; blue, 28 }  ,opacity=1 ]  {$S_{3}$};
% Text Node
\draw (164,302.4) node [anchor=north west][inner sep=0.75pt]  [font=\small,color={rgb, 255:red, 144; green, 19; blue, 254 }  ,opacity=1 ]  {$I_{1}$};
% Text Node
\draw (367,61.4) node [anchor=north west][inner sep=0.75pt]  [font=\small,color={rgb, 255:red, 248; green, 231; blue, 28 }  ,opacity=1 ]  {$S_{3}$};
% Text Node
\draw (213,103.6) node [anchor=north west][inner sep=0.75pt]  [font=\small,color={rgb, 255:red, 144; green, 19; blue, 254 }  ,opacity=1 ]  {$I_{1}$};
% Text Node
\draw (455,315.4) node [anchor=north west][inner sep=0.75pt]    {$I_{1} \oplus S_{3} \oplus P_{2}[ 1] \oplus P_{4}[ 1]$};

\end{tikzpicture}

\end{center}

\caption{On the bottom left, we see a triangulation of the annulus $A_{Q^{\bm\varepsilon}}$ where $Q^{\bm\varepsilon}$ is the quiver from Example \ref{exam: example of string module}. In the universal cover, the original triangulation giving rise to the quiver $Q^{\bm\varepsilon}$ from Figure \ref{fig:example of cover} is given by the black strands, while the colored strands correspond to the lifted arcs in the triangulation on the bottom left. This new triangulation gives the cluster on the bottom right.}

\label{fig: Example of cluster triangulation bijection}

\end{figure}
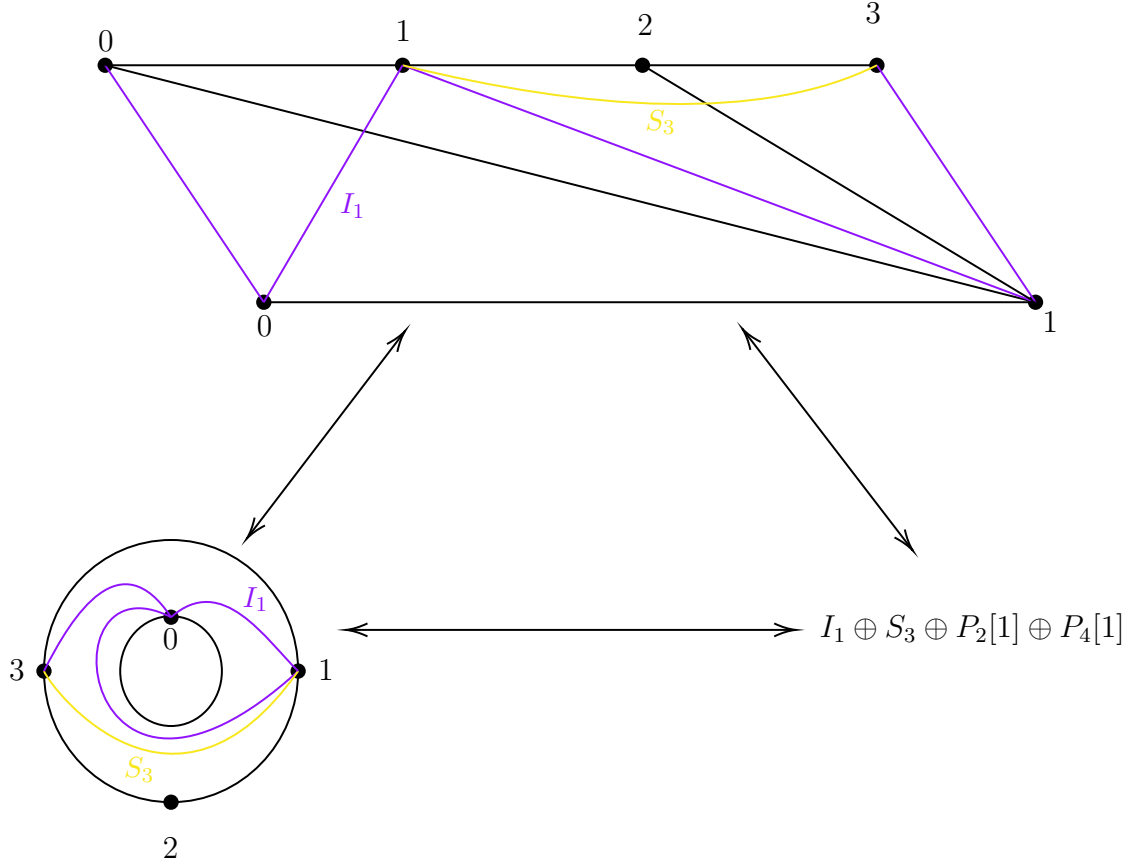

\subsection{Strand and Arc Diagrams For Exceptional Collections}

\indent

The following can be found in [\ref{ref: Maresca}] and [\ref{ref: combinatorics of exceptional sequences type A}], but we repeat it here for convenience. Throughout this subsection, let $Q^{\bm{\varepsilon}}$ be a quiver of type $\tilde{\mathbb{A}}_{n-1}$. Let $S_{n,\tilde{\varepsilon}} := \{\dots, (x_{-1},0), (x_0,0), (x_1,0),$ $ \dots\}\subset\mathbb{R}^2$ be a collection of points arranged on the $x$-axis from left to right together with the function $\tilde{\varepsilon} : S_{n,\tilde{\varepsilon}} \rightarrow \{+,-\}$ sending $(x_i,0) \mapsto \varepsilon_i$. The reason for decorating this set with a subscript $n$ is because we will be taking these vertices modulo $n$ once we associate these diagrams to an $\tilde{\mathbb{A}}_{n-1}$ quiver. We will label the point $(x_{i},0)$ by $i$ as in Figure \ref{fig: Example of Strand and Arc Diagrams}. The \textbf{fundamental domain} of $S_{n,\tilde{\varepsilon}}$, denoted by $FD$, is the collection $\{0,1,\dots,n\}$.

\begin{defn}
Let $i,j\in\mathbb{Z}$ be such that $i\neq j$. A \textbf{strand} $c(i,j) = c(j,i)$ on $S_{n,\tilde{\varepsilon}}$ is an isotopy class of simple curves in $\mathbb{R}^2$ where any $\gamma \in c(i,j)$ satisfies: 
\begin{enumerate}
\item the endpoints of $\gamma$ are $(x_i,0)$ and $(x_j,0)$, 
\item as a subset of $\mathbb{R}^2$, $\gamma \subset \{(x,y)\in \mathbb{R}^2 : x_{\text{min}(i,j)} \leq x \leq x_{\text{max}(i,j)}\} \backslash \{(x_k,0) : {\text{min}(i,j)} < k < {\text{max}(i,j)}\}$, 
\item if $\text{min}(i,j) \leq k \leq  \text{max}(i,j)$ and $\tilde{\varepsilon}_k = +$ (resp., $\tilde{\varepsilon}_k = -$), then $\gamma$ is locally below (resp., locally above) $(x_k,0)$.%, i.e., if $(x_k,y)\in\gamma$ then $y<0$ (resp. $y>0$).
\end{enumerate}
\end{defn}  

By locally below (locally above)$(x_k,0)$ we mean that for a parameterization of $\gamma = (\gamma^{(1)}, \gamma^{(2)}):[0,1]\rightarrow \mathbb{R}^2$, there exists a $\delta \in \mathbb{R}$ with $0 < \delta < {1\over 2} \text{min}\{|x_k-x_{k-1}|,|x_k-x_{k+1}|\}$ such that $\gamma$ satisfies $\gamma^{(2)}(t) < 0$ if $\tilde{\varepsilon}_k = +$ (resp., $\gamma^{(2)}(t) >0 $ if $\tilde{\varepsilon}_k = -$) for all $t \in (0,1)$ where $\gamma^{(1)}(t) \in (x_k - \delta, x_k + \delta)$.

\begin{defn} \label{defn: Fundamental lift of string mods}
Consider the string module $ij_k$. We define the \textbf{fundamental lift} of $ij_k$ to $S_{n,\tilde{\varepsilon}}$ as follows. 
\begin{itemize}
\item If $ij_k$ is preprojective or left regular, then the fundamental lift is the strand $c(\tilde{i},\tilde{j})$ with $\tilde{i} = i \in FD$ and ends at $\tilde{j} = i + k$.
\item If $ij_k$ is preinjective or right regular, then the fundamental lift is the strand $c(\tilde{i},\tilde{j})$ that begins at $\tilde{i} = j - k$ and ends at $\tilde{j} = j \in FD$.
\end{itemize}
\end{defn}

Note that there is an injection $\Phi_{\tilde{\varepsilon}}$ from the string modules in ind(rep$_{\Bbbk}(Q^{\bm\varepsilon}))$, the category of indecomposable $\Bbbk$ representations of $Q^{\bm\varepsilon}$, and the set of strands on $S_{n,\tilde{\varepsilon}}$ given by  $\Phi_{\tilde{\varepsilon}}(ij_k) := c(\tilde{i},\tilde{j})$ where $\tilde{i}$ and $\tilde{j}$ are as in Definition \ref{defn: Fundamental lift of string mods}. 

Note that in Definition \ref{defn: Fundamental lift of string mods}, $c(\tilde{i},\tilde{j})$ is one convention of specifying a fundamental lift of $\Bbbk Q^{\bm\varepsilon}$ modules to $S_{n,\tilde{\varepsilon}}$. We call a strand $c(i,j) \in S_{n,\tilde{\varepsilon}}$ a \textbf{fundamental strand} if $c(i,j) \in \text{im}(\Phi_{\tilde{\varepsilon}})$. Note that any strand $c(i,j)$ can be represented by a \textbf{monotone curve} $\gamma \in c(i,j)$. That is, there exists a curve $\gamma \in c(i,j)$ with a parameterization $\gamma = (\gamma^{(1)},\gamma^{(2)}):[0,1] \rightarrow \mathbb{R}^2$ such that if $t,s\in [0,1]$ and $t<s$, then $\gamma^{(1)}(t) < \gamma^{(1)}(s)$.  \\

For the following definitions we fix some $n \in \mathbb{N}$, again keeping in mind that we will associate these strands to a quiver $Q^{\bm{\varepsilon}}$ where $|Q_0| = n$. We say that two strands $c(i_1,j_1)$ and $c(i_2,j_2)$ \textbf{intersect nontrivially} if there exists $z\in\mathbb{Z}$ such that any two curves $\gamma_1 \in c(i_{1} + nz,j_{1} + nz)$ and $\gamma_2 \in c(i_2,j_2)$ intersect in their interiors. Otherwise we say that $c(i_1,j_1)$ and $c(i_2,j_2)$ \textbf{do not intersect nontrivially}. We say a strand $c(i,j)$ \textbf{self intersects} if there exists $z\in\mathbb{Z}$ such that any two curves $\gamma_1 \in c(i + nz,j + nz)$ and $\gamma_2 \in c(i,j)$ intersect in their interiors. If $c(i_1,j_1)$ and $c(i_2,j_2)$ do not intersect nontrivially, we say $c(i_2,j_2)$ is \textbf{clockwise} from $c(i_1,j_1)$ (or equivalently $c(i_1,j_1)$ is \textbf{counterclockwise} from $c(i_2,j_2)$) if and only if for any $z \in \mathbb{Z}$ such that there exists $\gamma_1 \in c(i_{1} + zn,j_{1} + zn)$ and $\gamma_2 \in c(i_2,j_2)$ that share an endpoint $(x_k,0)$ and do not intersect in their interiors, we have that $\gamma_1$ and $\gamma_2$ locally appear in one of the six configurations in Figure \ref{fig: Locally Clockwise} preserving the property that $\gamma_1 \in c(i_{1} + zn,j_{1} + zn)$ and $\gamma_2 \in c(i_2,j_2)$. We say that $c(i_2,j_2)$ is \textbf{locally clockwise} from $c(i_1,j_1)$ if there exists $z \in \mathbb{Z}$ such that some $\gamma_1 \in c(i_{1} + zn,j_{1} + zn)$ and $\gamma_2 \in c(i_2,j_2)$ share an endpoint $(x_k,0)$, do not intersect in their interiors, and $\gamma_1$ and $\gamma_2$ locally appear in one of the six configurations in Figure \ref{fig: Locally Clockwise} preserving the property that $\gamma_1 \in c(i_{1} + zn,j_{1} + zn)$ and $\gamma_2 \in c(i_2,j_2)$. It is possible for $c(i_2,j_2)$ to be locally clockwise from $c(i_1,j_1)$ at one of its endpoints, say $i_2$, but locally counterclockwise from $c(i_1,j_1)$ at its other endpoint. In that case $c(i_1,j_1),c(i_2,j_2)$ forms a \textbf{cycle} of length 2. More generally, we say that a collection of strands $\{c(i_1,j_1), c(i_2,j_2), \dots , c(i_k,j_k)\}$ form a \textbf{cycle} if and only if $c(i_l,j_l)$ is locally clockwise from $c(i_{l+1},j_{l+1})$ for all $l < k$ and $c(i_k,j_k)$ is locally clockwise from $c(i_1,j_1)$. We say a strand $c(i,j)$ is a \textbf{loop} if it does not self-intersect and $c(i,j)$ is both locally clockwise and locally counterclockwise from itself.
\\

\begin{figure}
    \centering
    \tikzset{every picture/.style={line width=0.75pt}} %set default line width to 0.75pt        

\begin{tikzpicture}[x=0.75pt,y=0.75pt,yscale=-1,xscale=1]
%uncomment if require: \path (0,332); %set diagram left start at 0, and has height of 332

%Straight Lines [id:da5125496577183359] 
\draw    (72,78) -- (94.22,100.01) ;
%Straight Lines [id:da1439658135047579] 
\draw    (71,79) -- (52.22,100.01) ;
%Straight Lines [id:da7574109717968718] 
\draw    (168,79) -- (188.22,100.01) ;
%Straight Lines [id:da7987429599382938] 
\draw    (168,79) -- (179.22,107.01) ;
%Straight Lines [id:da5785082571688516] 
\draw    (288,76) -- (279.22,103.01) ;
%Straight Lines [id:da33078244152019165] 
\draw    (288,76) -- (264.22,96.01) ;
%Straight Lines [id:da6310575474073974] 
\draw    (379,96) -- (398.22,73.01) ;
%Straight Lines [id:da7665146475502993] 
\draw    (379,96) -- (357.22,72.01) ;
%Straight Lines [id:da00007341384113845884] 
\draw    (499,100) -- (488.22,67.01) ;
%Straight Lines [id:da37347059621166356] 
\draw    (499,100) -- (469.22,70.01) ;
%Straight Lines [id:da10649586192391647] 
\draw    (597,98) -- (626.22,67.01) ;
%Straight Lines [id:da567161820238258] 
\draw    (597,98) -- (603.22,72.01) ;

% Text Node
\draw (66,70) node [anchor=north west][inner sep=0.75pt]  [font=\Huge]  {$\cdot $};
% Text Node
\draw (37,72.4) node [anchor=north west][inner sep=0.75pt]    {$\gamma _{2}$};
% Text Node
\draw (91,73.4) node [anchor=north west][inner sep=0.75pt]    {$\gamma _{1}$};
% Text Node
\draw (47,108.4) node [anchor=north west][inner sep=0.75pt]    {$\tilde{\varepsilon} _{k} =+$};
% Text Node
\draw (62,133.4) node [anchor=north west][inner sep=0.75pt]    {$( a)$};
% Text Node
\draw (162,71) node [anchor=north west][inner sep=0.75pt]  [font=\Huge]  {$\cdot $};
% Text Node
\draw (153,85.4) node [anchor=north west][inner sep=0.75pt]    {$\gamma _{2}$};
% Text Node
\draw (185,73.4) node [anchor=north west][inner sep=0.75pt]    {$\gamma _{1}$};
% Text Node
\draw (151,109.4) node [anchor=north west][inner sep=0.75pt]    {$\tilde{\varepsilon} _{k} =+$};
% Text Node
\draw (165,133.4) node [anchor=north west][inner sep=0.75pt]    {$( b)$};
% Text Node
\draw (282,68) node [anchor=north west][inner sep=0.75pt]  [font=\Huge]  {$\cdot $};
% Text Node
\draw (257,65.4) node [anchor=north west][inner sep=0.75pt]    {$\gamma _{2}$};
% Text Node
\draw (290,85.4) node [anchor=north west][inner sep=0.75pt]    {$\gamma _{1}$};
% Text Node
\draw (250,109.4) node [anchor=north west][inner sep=0.75pt]    {$\tilde{\varepsilon} _{k} =+$};
% Text Node
\draw (266,133.4) node [anchor=north west][inner sep=0.75pt]    {$( c)$};
% Text Node
\draw (373,88) node [anchor=north west][inner sep=0.75pt]  [font=\Huge]  {$\cdot $};
% Text Node
\draw (344,79.4) node [anchor=north west][inner sep=0.75pt]    {$\gamma _{1}$};
% Text Node
\draw (394,82.4) node [anchor=north west][inner sep=0.75pt]    {$\gamma _{2}$};
% Text Node
\draw (354,108.4) node [anchor=north west][inner sep=0.75pt]    {$\tilde{\varepsilon} _{k} =-$};
% Text Node
\draw (369,133.4) node [anchor=north west][inner sep=0.75pt]    {$( d)$};
% Text Node
\draw (493,92) node [anchor=north west][inner sep=0.75pt]  [font=\Huge]  {$\cdot $};
% Text Node
\draw (464,83.4) node [anchor=north west][inner sep=0.75pt]    {$\gamma _{1}$};
% Text Node
\draw (499,63.4) node [anchor=north west][inner sep=0.75pt]    {$\gamma _{2}$};
% Text Node
\draw (474,112.4) node [anchor=north west][inner sep=0.75pt]    {$\tilde{\varepsilon} _{k} =-$};
% Text Node
\draw (489,137.4) node [anchor=north west][inner sep=0.75pt]    {$( e)$};
% Text Node
\draw (591,90) node [anchor=north west][inner sep=0.75pt]  [font=\Huge]  {$\cdot $};
% Text Node
\draw (579,67.4) node [anchor=north west][inner sep=0.75pt]    {$\gamma _{1}$};
% Text Node
\draw (612,84.4) node [anchor=north west][inner sep=0.75pt]    {$\gamma _{2}$};
% Text Node
\draw (572,110.4) node [anchor=north west][inner sep=0.75pt]    {$\tilde{\varepsilon} _{k} =-$};
% Text Node
\draw (587,135.4) node [anchor=north west][inner sep=0.75pt]    {$(f)$}; 
% Text Node
\end{tikzpicture}
    \caption{The six possible local configurations of strand $c(i_2,j_2)$ being clockwise from strand $c(i_1,j_1)$ near the shared endpoint $(x_k,0)$}
    \label{fig: Locally Clockwise}
\end{figure}
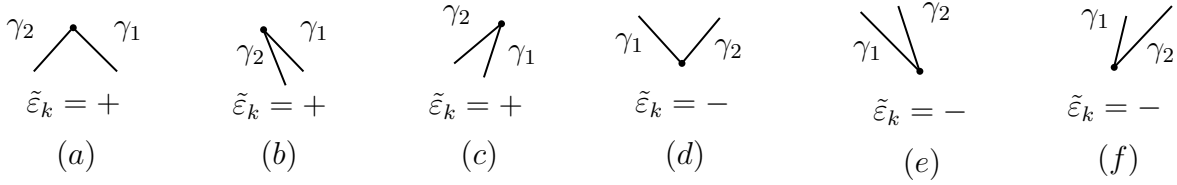

\begin{defn}
A \textbf{fundamental strand diagram} $\tilde{d} = \{(c(i_l,j_l)\}_{l \in [n]}$ is a collection of $n$ non-self intersecting strands on $\tilde{S}_{n,\tilde{\varepsilon}}$ that satisfies the following: 
\begin{enumerate}
\item $c(i_l,j_l)$ is a fundamental strand for all $l$,
\item distinct strands do not intersect nontrivially, and 
\item the graph determined by $\tilde{d}$ contains no loops or cycles.
\end{enumerate}
An example of a fundamental strand diagram can be seen in Figure \ref{fig: Example of Strand and Arc Diagrams}.
\end{defn}

\noindent
By restricting the above definitions to $S_{n,\varepsilon} := \{(x_0,0), (x_1,0), \dots (x_n,0)\}\subset\mathbb{R}^2$ and $\tilde{d}$ to $d = \{(c(i_l,j_l)\}_{l \in [n]}$, we get the strand diagrams from [\ref{ref: combinatorics of exceptional sequences type A}] which we associate to type $\mathbb{A}$ quivers. Note that in this case, it is not possible for a strand to self intersect or form a loop. \\

To quivers of type $\mathbb{\tilde{A}}_{n-1}$, we can associate another combinatorial object known as fundamental arc diagrams on the annulus $A_{Q^{\bm{\varepsilon}}}$ as follows. Recall that $n = |Q_0^{\bm\varepsilon}|$. To make a bijection between string modules and arcs on $A_{Q^{\bm{\varepsilon}}}$, we will introduce another notation for string modules.

\begin{defn}\label{defn: other notation for string}
For $l \in \mathbb{N}$ and $i,j\in[n]$, define the string module $(i,j;l)$ to be the module associated to the walk $e_{i+1}(\alpha_{i+1} \dots \alpha_i)^l\alpha_{i+1}\dots\alpha_{j-1}e_j$. 
\end{defn}

Notice that all strings in $Q^{\bm\varepsilon}$ can be uniquely written in this form. We take the convention that when we write strings in this notation, we take $l$ to be maximal. Using this notation, we have the following bijection $\psi$ between strings and arcs:

 \[ M = (i,j;l) \mapsto \begin{cases} 
          a(i \, (\text{mod } n),j\, (\text{mod } n))[-l] & \text{if $M$ preinjective} \\
          a(i\, (\text{mod } n),j\, (\text{mod } n))[l] & \text{otherwise}
       \end{cases}.
    \]

Notice that under this bijection, bridging arcs that begin on the inner (outer) boundary component correspond to preinjective (preprojective) modules. On the other hand, exterior arcs beginning and ending on the inner (outer) boundary component correspond to right (left) regular modules. This convention of associating string modules to arcs will be called the \textbf{exceptional convention}.

\begin{exmp}
Let $Q^{\bm\varepsilon}$ be the quiver of type $\tilde{\mathbb{A}}_3$ from Example \ref{exam: example of string module} and consider the walk $\alpha_4^{-1}\alpha_1\alpha_2\alpha_3$. Then we denote this string module by $34_5$. We can rewrite this walk as $e_4(\alpha_4^{-1}\alpha_1\alpha_2\alpha_3)^1e_4$. Thus in the notation in Definition \ref{defn: other notation for string}, we can write this string as $(3,4;1)$. Under the map $\psi$, the corresponding arc on the annulus is $a(3,0)[1]$ as shown by the red arc in Figure \ref{fig:example of cover}. 

Now consider the string $43_3 = \alpha_1\alpha_2 = e_1(\alpha_1\alpha_2\alpha_3\alpha_4^{-1})^0\alpha_1\alpha_2e_3$. Then this string is also denoted by $(4,3;0)$ and its corresponding arc is $a(0,3)[0]$ as shown in Example \ref{exmp: naming of arcs}. Finally, consider the string $43_7 = e_1(\alpha_1\alpha_2\alpha_3\alpha_4^{-1})^1\alpha_1\alpha_2e_3 = (4,3;1)$. Then this string corresponds to the arc $a(0,3)[-1]$ as in Example \ref{exmp: naming of arcs}.
\end{exmp}

Let $A_{Q^{\bm\varepsilon}}$ be an annulus associated to a quiver $Q^{\bm{\varepsilon}}$ and suppose that the difference between the two radii of the annulus is $r$. If $a(i_1,j_1)[\lambda_1]$ and $a(i_2,j_2)[\lambda_2]$ do not intersect nontrivially, we say $a(i_2,j_2)[\lambda_2]$ is \textbf{clockwise} from $a(i_1,j_1)[\lambda_1]$ (or equivalently $a(i_1,j_1)[\lambda_1]$ is \textbf{counterclockwise} from $a(i_2,j_2)[\lambda_2]$) if and only if there exists $\gamma_1 \in a(i_{1},j_{1})[\lambda_1]$ and $\gamma_2 \in a(i_2,j_2)[\lambda_2]$ that share at an endpoint $p$, do not intersect in their interiors, and we have the following: if we place a circle of radius ${r\over 2}$ about the shared point $p$, then the circle must be traversed clockwise through the interior of the annulus to get from the point of intersection of $a(i_2,j_2)[\lambda_2]$ with the circle to that of $a(i_1,j_1)[\lambda_1]$ with the circle. Again, we say that a collection of arcs $\{a(i_1,j_1)[\lambda_1], a(i_2,j_2)[\lambda_2], \dots , a(i_k,j_k)[\lambda_k]\}$ forms a \textbf{cycle} if and only if $a(i_l,j_l)[\lambda_l]$ is clockwise from $a(i_{l+1},j_{l+1})[\lambda_{l+1}]$ for all $l < k$ and $a(i_k,j_k)[\lambda_k]$ is clockwise from $a(i_1,j_1)[\lambda_1]$. We say a an arc $a(i,j)[\lambda]$ is a \textbf{loop} if it does not self-intersect and $i=j$.

\begin{rem}\label{rem: difference between two conventions}
To illustrate that the cluster and exceptional conventions are indeed different, notice that the yellow arc in Figure \ref{fig: Example of cluster triangulation bijection} gives $S_3$ in the cluster convention, but gives the string $13_2$ in the exceptional convention. \\
\end{rem}

\begin{defn}
A \textbf{fundamental arc diagram} is a collection of $n$ non-self intersecting arcs on $A_{Q^{\bm{\varepsilon}}}$ that satisfies the following: 
\begin{enumerate}
\item distinct arcs do not intersect nontrivially, and 
\item the arcs do not form any loops or cycles. 
\end{enumerate}
An example of a fundamental arc diagram can be seen in Figure \ref{fig: Example of Strand and Arc Diagrams}.

\end{defn}

\begin{figure}
    \centering

\tikzset{every picture/.style={line width=0.75pt}} %set default line width to 0.75pt        

\begin{tikzpicture}[x=0.75pt,y=0.75pt,yscale=-1,xscale=1]
%uncomment if require: \path (0,332); %set diagram left start at 0, and has height of 332

%Curve Lines [id:da9260814532105794] 
\draw   [color={rgb, 255:red, 0; green, 0; blue, 0 }  ,draw opacity=1 ] (61,130) .. controls (101,100) and (106.22,213.01) .. (169.22,146.01) ;
%Curve Lines [id:da997250983778226] 
\draw    [color={rgb, 255:red, 0; green, 0; blue, 0 }  ,draw opacity=1 ](171,146) .. controls (231.22,216.01) and (271.22,87.01) .. (290.22,129.01) ;
%Curve Lines [id:da9096567283011463] 
\draw   [color={rgb, 255:red, 0; green, 0; blue, 0 } ,draw opacity=1 ] (114,144) .. controls (115.22,148.01) and (129.22,175.01) .. (169.22,145.01) ;
%Curve Lines [id:da852334248434808] 
\draw    [color={rgb, 255:red, 0; green, 0; blue, 0 }  ,draw opacity=1] (172.22,146.01) .. controls (183.44,153.01) and (191.22,167.01) .. (227.22,144.01) ;
%Shape: Donut [id:dp6317051431314107] 
\draw   (460.22,137) .. controls (460.22,124.85) and (470.47,115) .. (483.11,115) .. controls (495.75,115) and (506,124.85) .. (506,137) .. controls (506,149.15) and (495.75,159) .. (483.11,159) .. controls (470.47,159) and (460.22,149.15) .. (460.22,137)(427.22,137) .. controls (427.22,106.63) and (452.24,82.01) .. (483.11,82.01) .. controls (513.98,82.01) and (539,106.63) .. (539,137) .. controls (539,167.38) and (513.98,192) .. (483.11,192) .. controls (452.24,192) and (427.22,167.38) .. (427.22,137) ;
%Straight Lines [id:da9138331312697614] 
\draw    (483.22,115.01) ;
\draw [shift={(483.22,115.01)}, rotate = 0] [color={rgb, 255:red, 0; green, 0; blue, 0 }  ][fill={rgb, 255:red, 0; green, 0; blue, 0 }  ][line width=0.75]      (0, 0) circle [x radius= 3.35, y radius= 3.35]   ;
%Straight Lines [id:da19447173738768941] 
\draw    (537.22,136.01) -- (538.22,135.01) ;
\draw [shift={(538.22,135.01)}, rotate = 315] [color={rgb, 255:red, 0; green, 0; blue, 0 }  ][fill={rgb, 255:red, 0; green, 0; blue, 0 }  ][line width=0.75]      (0, 0) circle [x radius= 3.35, y radius= 3.35]   ;
%Straight Lines [id:da06530176638854579] 
\draw    (484.22,191.01) ;
\draw [shift={(484.22,191.01)}, rotate = 0] [color={rgb, 255:red, 0; green, 0; blue, 0 }  ][fill={rgb, 255:red, 0; green, 0; blue, 0 }  ][line width=0.75]      (0, 0) circle [x radius= 3.35, y radius= 3.35]   ;
%Straight Lines [id:da6597334241172266] 
\draw    (427.22,136.01) ;
\draw [shift={(427.22,136.01)}, rotate = 0] [color={rgb, 255:red, 0; green, 0; blue, 0 }  ][fill={rgb, 255:red, 0; green, 0; blue, 0 }  ][line width=0.75]      (0, 0) circle [x radius= 3.35, y radius= 3.35]   ;
%Curve Lines [id:da6870288829924558] 
\draw    (484,114) .. controls (516.22,85.01) and (549.22,137.01) .. (484.22,190.01) ;
%Curve Lines [id:da9272316596476673] 
\draw    (484,115) .. controls (466.22,80.01) and (404.22,136.01) .. (484.22,191.01) ;
%Curve Lines [id:da9998462970833351] 
\draw    (426.22,136.01) .. controls (431.22,141.01) and (434.22,179.01) .. (485,191) ;
%Curve Lines [id:da6264001191545168] 
\draw    (484.22,191.01) .. controls (513.22,188.01) and (535.22,142.01) .. (538.22,135.01) ;

% Text Node
\draw (54,128.4) node [anchor=north west][inner sep=0.75pt]    {$0$};
% Text Node
\draw (108,128.4) node [anchor=north west][inner sep=0.75pt]    {$1$};
% Text Node
\draw (165,128.4) node [anchor=north west][inner sep=0.75pt]    {$2$};
% Text Node
\draw (223,128.4) node [anchor=north west][inner sep=0.75pt]    {$3$};
% Text Node
\draw (286,128.4) node [anchor=north west][inner sep=0.75pt]    {$4$};
% Text Node
\draw (479,120.4) node [anchor=north west][inner sep=0.75pt]    {$0$};
% Text Node
\draw (547,127.4) node [anchor=north west][inner sep=0.75pt]    {$1$};
% Text Node
\draw (478,206.4) node [anchor=north west][inner sep=0.75pt]    {$2$};
% Text Node
\draw (404,128.4) node [anchor=north west][inner sep=0.75pt]    {$3$};

\end{tikzpicture}

    \caption{A fundamental strand diagram and its corresponding fundamental arc diagram for the quiver of type $\tilde{\mathbb{A}}_3$ with orientation vector $\bm{\varepsilon} = (-,+,+,+)$ from Example \ref{exam: example of string module}. Note that this is also a strand diagram for a quiver of type $\mathbb{A}_4$ with the same orientation vector.}
    \label{fig: Example of Strand and Arc Diagrams}
\end{figure}
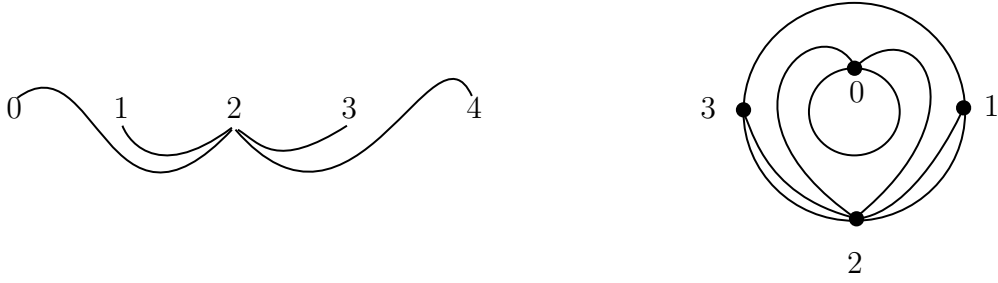

\noindent
The following theorems are from [\ref{ref: combinatorics of exceptional sequences type A}] and [\ref{ref: Maresca}] respectively:

\begin{thm} {\color{white} .}

\begin{itemize}
	\item Exceptional sets of type $\mathbb{A}_n$ are in bijection with fundamental strand diagrams $d$ on $S_{n,\varepsilon}$. [\ref{ref: combinatorics of exceptional sequences type A}]
	\item Exceptional sets of type $\mathbb{\tilde{A}}_{n-1}$ are in bijection with fundamental strand diagrams $\tilde{d}$ on $\tilde{S}_{n,\tilde{\varepsilon}}$ and fundamental arc diagrams on $A_{Q^{\bm{\varepsilon}}}$. [\ref{ref: Maresca}]
\end{itemize}

\end{thm}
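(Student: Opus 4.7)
The plan is to leverage the explicit assignment $\Phi_{\tilde{\varepsilon}}$ from string modules to fundamental strands defined earlier, together with the bijection $\psi$ between string modules and arcs, and to show that the algebraic conditions defining an exceptional collection translate exactly into the geometric conditions defining a fundamental strand (respectively arc) diagram. The argument proceeds in three stages: a pairwise dictionary (nontrivial intersection of strands $\iff$ nonzero Hom or Ext between the corresponding modules), a global acyclicity equivalence (exceptional orderability $\iff$ no loops or cycles), and an equivalence between the strand and arc pictures via $\psi$ applied under the universal cover.

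For the type $\mathbb{A}_n$ statement, I would first recall that every exceptional indecomposable over $\Bbbk Q^{\bm{\varepsilon}}$ is an interval module, hence sits in bijection with a strand $c(i,j)$ on $S_{n,\varepsilon}$ under $\Phi_{\varepsilon}$. The pairwise step is then a finite case analysis on the relative order of the endpoints $i_1,j_1,i_2,j_2$ and the signs $\tilde{\varepsilon}_k$ in that range: one computes $\mathrm{Hom}(M_{i_1,j_1},M_{i_2,j_2})$ and $\mathrm{Ext}(M_{i_1,j_1},M_{i_2,j_2})$ from the combinatorics of intervals and compares with the possible local pictures (Figure \ref{fig: Locally Clockwise}), seeing that the vanishing of both Hom and Ext in both directions is equivalent to the strands not intersecting nontrivially. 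For the global step, one must show that a set of such strands can be ordered so that all Hom/Ext vanish one way iff the induced directed graph (arrows recording nonzero Hom or Ext) is acyclic, and the latter is equivalent to the no-cycle condition on the strand diagram.

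For the type $\tilde{\mathbb{A}}_{n-1}$ statement, the plan is to reduce the intersection analysis locally to the type $\mathbb{A}$ case by passing to the universal cover and using the fundamental lift from Definition \ref{defn: Fundamental lift of string mods}. Band modules have self-extensions and so are excluded from exceptional collections; this is what prevents an exceptional strand from self-intersecting or forming a loop, since those phenomena correspond to wrapping around the annulus through a homogeneous tube. To match the strand and arc pictures, I would check that $\psi$ carries nontrivial intersections of strands (in the sense involving the $\mathbb{Z}$-translate) to nontrivial intersections of arcs on $A_{Q^{\bm{\varepsilon}}}$, and that local clockwise configurations match the winding-based clockwise definition on the annulus, so that cycles in the strand diagram correspond exactly to cycles in the arc diagram. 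Combined with the pairwise and global steps above, the bijection extends from individual modules to complete collections.

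The main obstacle is the acyclicity-vs-orderability equivalence in the $\tilde{\mathbb{A}}$ case. Unlike in type $\mathbb{A}$, the regular component organizes string modules into tubes, so several pairwise non-crossing strands can still form a cycle that does not come from any pairwise obstruction; this is precisely the situation where a set of modules in a common tube cannot be ordered into an exceptional sequence even though each pair is Hom/Ext-compatible. Showing that ruling out such global cycles is exactly what permits a topological sort into an exceptional sequence requires either an explicit inductive construction of the ordering (peeling off a sink in the strand graph at each step and verifying the remaining diagram is still fundamental) or a careful argument using the structure of the tubes and bridging arcs. Once this step is in place, the rest of the bijection is bookkeeping.
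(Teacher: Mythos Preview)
The paper does not prove this theorem at all: it is stated with explicit citations to [\ref{ref: combinatorics of exceptional sequences type A}] and [\ref{ref: Maresca}] and used as a black box thereafter. There is thus no ``paper's own proof'' to compare your proposal against; your outline is essentially a sketch of how those cited papers proceed.

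That said, one point in your pairwise dictionary is misstated. You write that ``the vanishing of both Hom and Ext in both directions is equivalent to the strands not intersecting nontrivially.'' This is not correct: two non-crossing strands sharing an endpoint will typically have a nonzero Hom or Ext in exactly one direction (this is precisely what the clockwise/counterclockwise relation in Figure~\ref{fig: Locally Clockwise} records). The correct equivalence is that the strands cross nontrivially if and only if there are nonzero Hom/Ext groups in \emph{both} directions, so that no ordering of the pair is exceptional. With non-crossing, at most one direction is obstructed, and the clockwise relation tells you which. This distinction is exactly what makes the global step nontrivial: a set of pairwise non-crossing strands still carries a directed graph of one-way obstructions, and orderability into an exceptional sequence is equivalent to that directed graph being acyclic, which is the no-cycles condition on the diagram. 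Your identification of this as the main obstacle is right, and the ``peel off a sink'' induction you suggest is how the cited papers handle it.
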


It is known that there are infinitely many exceptional sets of type $\mathbb{\tilde{A}}_n$. However, we can place them into families as follows.

\begin{defn}
A \textbf{parametrized family} of complete exceptional collections of $\Bbbk Q^{\bm{\varepsilon}}$ modules, denoted by $\Xi$, is a set of exceptional collections such that $\xi_1,\xi_2 \in \Xi$ if and only if the fundamental arc diagram corresponding to $\xi_1$ differs from the fundamental arc diagram corresponding to $\xi_2$ by a sequence of $2\pi$ Dehn twists of the inner boundary component of $A_{Q^{\bm\varepsilon}}$. 
\end{defn}

\begin{rem}
An algebraic description of parametrized families has been given in [\ref{ref: Maresca}] in terms of the Auslander--Reiten quiver of the transjective component of $D^b(\Bbbk Q^{\bm\varepsilon})$.
\end{rem}

It was shown in [\ref{ref: Maresca}] that under the exceptional convention, regular modules (exterior arcs) $a(i,j)[\lambda]$ require $\lambda = 0$ to be exceptional. A diagram is called \textbf{small} if $\lambda = 0$ for all arcs in the diagram. Two small diagrams are \textbf{inner equivalent} if they are in the same family. The following is from [\ref{ref: Maresca}]:

\begin{thm}\label{thm: bijection with small diagrams}
Let $Q^{\bm{\varepsilon}}$ be a quiver of type $\tilde{\mathbb{A}}_{n}$. Then there are finitely many parametrized families of exceptional collections of $\Bbbk Q^{\bm{\varepsilon}}$-modules and they are in bijection with inner equivalence classes of small fundamental arc diagrams on $A_{Q^{\bm{\varepsilon}}}$. \qed
\end{thm}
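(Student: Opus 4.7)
The plan is to reduce the theorem to a normalization result: show that every orbit of the $2\pi$ Dehn twist action on fundamental arc diagrams contains a unique small representative. By the preceding theorem, exceptional collections biject with fundamental arc diagrams, and by definition parametrized families are exactly the Dehn twist orbits; so identifying a canonical small representative in each orbit will simultaneously establish the bijection with small fundamental arc diagrams (the inner equivalence classes turning out to be singletons) and the finiteness assertion.

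First I would describe the action of a clockwise $2\pi$ Dehn twist $T$ of the inner boundary on an exceptional fundamental arc diagram. On a bridging arc $a(i,j)[\lambda]$, the twist adds $+1$ to $\lambda$; on an exterior arc with $\lambda = 0$, it acts trivially up to isotopy, since such an arc can be pushed off the annular neighborhood supporting the twist. Because the cited result from [\ref{ref: Maresca}] forces every exterior arc in an exceptional diagram to have $\lambda = 0$, the effect of $T$ on any exceptional arc diagram is simply to shift the winding of each bridging arc by the same integer while fixing the exterior arcs.

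The crux of the argument, and the step I expect to be the main obstacle, is the following lemma: in any fundamental arc diagram of $A_{Q^{\bm{\varepsilon}}}$, all bridging arcs share a common winding number. I would prove this geometrically in the universal cover $\mathbb{R}\times [0,1]$: two bridging arcs with distinct windings lift to strands whose endpoints on the inner boundary line are offset by at least one fundamental-domain shift, and a case analysis on their relative positions shows that the corresponding strands must intersect either within the fundamental domain or after an $n$-shift, contradicting the non-crossing condition in the definition of a fundamental arc diagram. Some care is needed when the inner boundary carries more than one marked point, where ``same winding'' must be interpreted cyclically in a manner compatible with the Dehn twist action.

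Granted the lemma, applying $T^{-\lambda}$ to an arbitrary exceptional fundamental arc diagram (where $\lambda$ is the common winding of its bridging arcs) yields a small fundamental arc diagram in the same family, proving existence. For uniqueness, two small diagrams in the same family cannot be related by a nontrivial power of $T$, since any such twist would move the bridging windings away from zero and destroy smallness; hence inner equivalence classes of small diagrams are singletons and correspond bijectively to families. Finiteness of the set of families is then immediate: in a small diagram each arc has winding zero between two of the $n+1$ marked points, leaving only finitely many possibilities.
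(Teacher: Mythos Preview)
First, note that the paper does not itself prove this theorem; it is quoted from reference~[\ref{ref: Maresca}] and marked \qed. So there is no argument in the paper to compare against directly, but the surrounding text does make clear what the correct statement entails, and it shows that your approach cannot work as written.

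The gap is in your description of the Dehn twist action and the uniqueness claim that follows from it. You assert that a $2\pi$ inner Dehn twist simply adds $+1$ to the parameter $\lambda$ of every bridging arc, so that a family contains at most one small diagram and inner equivalence classes are singletons. This is not how the parameter behaves. Because arcs are named according to the convention that $a(i,j)$ travels \emph{clockwise} from $i$ to $j$, the same isotopy class can carry two different names: the paper records (in the proof of Proposition~2.1) the identification $a(0,i)[\lambda]=a(i,0)[\lambda-1]$ for $\lambda\ge 1$. Thus, for instance, applying $T$ to $a(0,i)[0]$ yields the arc $a(0,i)[1]=a(i,0)[0]$, whose parameter in canonical form is again $0$. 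So the twist can convert a preinjective-type bridging arc with $\lambda=0$ into a preprojective-type bridging arc with $\lambda=0$, and a small diagram whose bridging arcs are all of one type is sent by one twist to \emph{another small diagram}. The paper says this explicitly just after Lemma~\ref{lem: unique small triangulation}: families of triangulations biject with small triangulations, ``contrary to the case of exceptional collections in which we needed equivalence classes of small diagrams to form the analogous bijection.'' Inner equivalence classes are therefore genuinely not singletons, and the phrase in the theorem is not decorative.

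This also breaks your ``key lemma.'' Starting from a small diagram containing both a preprojective arc $a(i,0)[0]$ and a preinjective arc $a(0,j)[0]$ and applying $T$ once produces a fundamental arc diagram whose bridging parameters are $1$ and $0$ respectively, so bridging arcs in a fundamental arc diagram need \emph{not} share a common winding number; the paper notes they can differ by as much as $1$. Both your existence argument (apply $T^{-\lambda}$ for ``the'' common $\lambda$) and your uniqueness argument therefore fail, and with them the claimed finiteness reduction.
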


\section{Counting Families of Clusters}

\indent 

As was done in [\ref{ref: Maresca}], we can also place triangulations of the annulus into families. Let $Q^{\bm\varepsilon}$ be a quiver of type $\tilde{\mathbb{A}}_n$ and $T$ be a triangulation of the annulus $A_{Q^{\bm\varepsilon}}$ with $p$ marked points on the outer circle corresponding to the $i\in Q_0^{\bm{\varepsilon}}$ such that $\varepsilon_i = +$. Each point on the outer circle is labeled with its corresponding $i$ in clockwise order respecting their natural numerical order. Moreover, we label the $q$ marked points on the inner circle analogously. Then $T$ consists of $p + q = n+1 = |Q_0^{\bm\varepsilon}|$ arcs of the form $a(i,j)[\lambda]$. 

\begin{defn} {\color{white} .}
\begin{itemize}
\item A \textbf{parametrized family} of triangulations of $A_{Q^{\bm\varepsilon}}$, denoted by $\mathscr{T}$, is a collection of triangulations of $A_{Q^{\bm\varepsilon}}$ such that $T_1,T_2 \in \mathscr{T}$ if and only if $T_1$ is attained from $T_2$ by a sequence of $2\pi$ Dehn twists of the inner boundary circle of $A_{Q^{\bm\varepsilon}}$.
\item A triangulation is called \textbf{small} if $\lambda = 0$ for all bridging arcs in the triangulation. Two small triangulations are \textbf{inner equivalent} if they belong to the same parametrized family.
\end{itemize}
\end{defn}

Note that by definition, every triangulation belongs in a unique family. As was shown in [\ref{ref: Master's Thesis Clusters and Triangulations}] and [\ref{ref: Maresca}], the parameters of all bridging arcs can differ by at most $1$ to avoid crossings. The work done in [\ref{ref: Maresca}] to show the existence of a unique inner equivalence class of small diagrams in each family of exceptional collections also shows that there is a unique equivalence class of small triangulations in each family of triangulations since the proofs are independent of the existence of cycles. Moreover, the fact that clusters in type $\tilde{\mathbb{A}}_n$ are in bijection with triangulations of the annulus $A_{Q^{\bm\varepsilon}}$ allows us to conclude that clusters also occur in parametrized families. Our next goal is to count how many of these families there are in the straight orientation case by counting all possible small triangulations. \\

Consider the annulus $A_{Q^{\bm\varepsilon}}$ associated to the straightly oriented $\tilde{\mathbb{A}}_n$ quiver $Q^{\bm\varepsilon}$. Then there is one marked point on the inner circle and $n$ marked points on the outer circle. By placing these outer marked points equidistant from one another, we have an action of ${2\pi \over n}$ clockwise Dehn twists of the unlabeled annulus about the \textit{outer} circle. After we perform this twist, we label the vertices with the same initial label as depicted in Figure \ref{fig: Example of orbit}. This action is well defined on both triangulations and fundamental arc diagrams since it is a homeomorphism. Moreover, it is well defined on parametrized families of both triangulations and fundamental arc diagrams as shown in the next proposition.

\begin{prop}
The action of ${2\pi\over n}$ clockwise outer Dehn twists is well defined on parametrized families of triangulations and fundamental arc diagrams.
\end{prop}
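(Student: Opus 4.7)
The plan is to view both operations as elements of the mapping class group of the annulus $A_{Q^{\bm\varepsilon}}$ and then exploit the fact that Dehn twists about disjoint, parallel simple closed curves commute.

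First, I would set up the two actions carefully. Let $\sigma$ denote the self-homeomorphism of $A_{Q^{\bm\varepsilon}}$ given by a $\tfrac{2\pi}{n}$ clockwise rotation supported in a collar of the outer boundary, realized as a partial Dehn twist about a simple closed curve isotopic to the outer boundary. Because the $n$ marked points on the outer boundary are placed equidistantly, $\sigma$ permutes them cyclically, so after relabeling the points with their original labels (as in the figure) $\sigma$ descends to a well-defined map on arcs, on triangulations, and on fundamental arc diagrams. Let $\tau$ denote the full $2\pi$ clockwise Dehn twist about the inner boundary circle, which is the homeomorphism defining the equivalence relation in a parametrized family.

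Second, I would observe that $\sigma$ and $\tau$ commute up to isotopy rel boundary. Indeed, $\sigma$ is supported in an arbitrarily thin collar neighborhood $N_{\mathrm{out}}$ of the outer boundary and $\tau$ is supported in a collar neighborhood $N_{\mathrm{in}}$ of the inner boundary; choosing these collars disjoint, the two maps have disjoint supports and therefore commute on the nose. In particular, on the set of isotopy classes of arcs (and hence of triangulations and of fundamental arc diagrams), $\sigma \tau = \tau \sigma$.

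Third, I would verify the well-definedness claim directly. Suppose $T_1, T_2$ lie in the same parametrized family, so $T_2 = \tau^{k}(T_1)$ for some $k\in\mathbb{Z}$ (this is exactly the defining condition of being in the same family, for triangulations and for fundamental arc diagrams). Applying $\sigma$ and using commutativity gives
\[
\sigma(T_2) \;=\; \sigma\tau^{k}(T_1) \;=\; \tau^{k}\sigma(T_1),
\]
so $\sigma(T_1)$ and $\sigma(T_2)$ again differ by $k$ inner $2\pi$ Dehn twists and therefore lie in a common parametrized family. The same argument applies to fundamental arc diagrams since $\sigma$ and $\tau$ are both homeomorphisms of the annulus and therefore preserve the non-crossing, non-loop, non-cycle conditions defining such diagrams.

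The only subtle point, which I regard as the main obstacle, is justifying that $\sigma$ is literally the identity on combinatorial labels rather than merely a bijection on the set of marked points. This is handled by the equidistant placement convention: after $\sigma$, every outer marked point lands exactly where another outer marked point used to sit, and we simply reuse the original labels. Once this bookkeeping is pinned down, the commutativity of $\sigma$ and $\tau$ makes the induced action on parametrized families automatic.
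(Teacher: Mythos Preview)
Your argument is correct and considerably cleaner than the paper's. The key observation---that the partial outer twist $\sigma$ and the full inner twist $\tau$ can be realized with disjoint supports and therefore commute as homeomorphisms, hence also on isotopy classes of arcs---immediately yields $\sigma\tau^k=\tau^k\sigma$, which is exactly what is needed to push $\sigma$ down to parametrized families. The remark that $\sigma$, being an orientation-preserving homeomorphism, preserves non-crossing and the local clockwise/counterclockwise relations (hence the absence of loops and cycles) is the right way to see that fundamental arc diagrams are taken to fundamental arc diagrams.

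By contrast, the paper proves this computationally: it writes each bridging arc as $a(0,i_\ell)[\lambda_\ell]$, records explicitly how the outer twist changes the label $i_\ell$ and, in the boundary case $i_\ell=n$, the parameter $\lambda_\ell$, and then checks by hand that two diagrams in the same family are sent to diagrams whose parameters still differ by a common integer $z$. This case analysis is exactly the bookkeeping your disjoint-support argument renders unnecessary: the jump in $\lambda$ when an endpoint crosses the ``dateline'' is an artifact of the labeling scheme, not of the underlying isotopy class, and commutativity at the level of homeomorphisms bypasses it entirely. What the paper's approach buys is an explicit formula for the image of each arc, which is convenient for the later arguments about small triangulations; what your approach buys is a proof that is orientation-independent and requires no cases.
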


\begin{proof}
To make the proof more concise, we adopt the convention that $a(0,i)[\lambda]$ where $\lambda \geq 1$ is the arc $a(i,0)[\lambda-1]$. Throughout, by \textbf{arc diagram}, we mean a collection of $n+1$ non-crossing arcs on the annulus $A_Q^{\bm\varepsilon}$. Consider the arc diagram $D$ given by $\{a(0,i_1)[\lambda_1]$, $a(0,i_2)[\lambda_2]$, $\dots,$ $ a(0,i_k)[\lambda_k]$, $a(i_{k+1},j_{k+1})$, $\dots$ , $a(i_{n+1},j_{n+1})$\} where $i_1\leq i_2\leq \dots \leq i_k$ and the arcs without parameters have both endpoints on the outer boundary. Then if $i_k\neq n$, a ${2\pi\over n}$ clockwise outer Dehn twist gives the diagram containing the arcs$\{a(0,i_1+1)[\lambda_1]$, $a(0,i_2+1)[\lambda_2]$, $\dots,$ $ a(0,i_k+1)[\lambda_k]$, $a(i_{k+1}+1,j_{k+1}+1)$, $\dots$ , $a(i_{n+1}+1,j_{n+1}+1)$\} where for $p >k$, $i_p+1 = 1$ and  $j_p + 1 = 1$ if $i_p = j_p = n$ and $i_p+1$ and $j_p+1$ equal themselves otherwise. If $i_k = n$, since the arcs do not cross, we have either $i_{k-1} = i_{k}$ or $i_{k-1} < i_k$. In the first case, we have that the ${2\pi\over n}$ clockwise outer Dehn twist gives the diagram containing the arcs $\{a(0,i_1+1)[\lambda_1], a(0,i_2+1)[\lambda_2], \dots, a(0,1)[\lambda_{k-1}-1], a(0,1)[\lambda_k-1], a(i_{k+1}+1,j_{k+1}+1), \dots , a(i_{n+1}+1,j_{n+1}+1)\}$ and in the second case, we get the diagram $\{a(0,i_1+1)[\lambda_1], a(0,i_2+1)[\lambda_2], \dots, a(0,i_{k-1}+1)[\lambda_{k-1}], a(0,1)[\lambda_k-1], a(i_{k+1}+1,j_{k+1}+1), \dots , a(i_{n+1}+1,j_{n+1}+1)\}$. In both these cases, we still have that for $p >k$, $i_p+1 = 1$ and  $j_p + 1 = 1$ if $i_p = j_p = n$ and $i_p+1$ and $j_p+1$ equal themselves otherwise. 

Now suppose we have $D'$ in the same family as $D$, so $D'$ contains the arcs $\{a(0,i_1)[\lambda'_1]$, $a(0,i_2)[\lambda'_2]$, $\dots,$ $ a(0,i_k)[\lambda'_k]$, $a(i_{k+1},j_{k+1})$, $\dots$ , $a(i_{n+1},j_{n+1})$\} where for all $l$, $\lambda'_l = \lambda_l + z$ for some $z\in\mathbb{Z}$. Then if $i_k\neq n$, a ${2\pi\over n}$ clockwise outer Dehn twist gives the diagram containing the arcs $\{a(0,i_1+1)[\lambda'_1]$, $a(0,i_2+1)[\lambda'_2]$, $\dots,$ $ a(0,i_k+1)[\lambda'_k]$, $a(i_{k+1}+1,j_{k+1}+1)$, $\dots$ , $a(i_{n+1}+1,j_{n+1}+1)$\}. After $z$ $2\pi$ inner Dehn twists in the appropriate direction, we will get the diagram $\{a(0,i_1+1)[\lambda_1], a(0,i_2+1)[\lambda_2], \dots, a(0,i_k+1)[\lambda_k], a(i_{k+1}+1,j_{k+1}+1), \dots , a(i_{n+1}+1,j_{n+1}+1)\}$. A similar argument holds when $k = n$. Therefore we conclude that if two arc diagrams are in the same family, a ${2\pi\over n}$ clockwise outer Dehn twist of each diagram will give arc diagrams that are in the same family.
\end{proof}

This leads us to the following definition.

\begin{defn}
We define two arc diagrams to be \textbf{outer equivalent} if their families differ by a sequence of ${2\pi\over n}$ clockwise outer Dehn twists.
\end{defn}

Note that outer equivalence forms an equivalence relation on families and therefore, each family of arc diagrams belongs in one and only one outer equivalence class. Moreover, note that given an arc diagram, there are precisely $n$ families of arc diagrams in its outer equivalence class.  

As shown in Lemma \ref{lem: unique small triangulation}, each of these outer equivalence classes of a small triangulation contains precisely $n$ distinct small triangulations, and hence families of clusters. We will show there are precisely $C_n$ outer equivalence classes of triangulations where $C_n = {1\over n+1} \binom{2n}{n}$ is the $n^{th}$ Catalan number [\ref{ref: Catalan Numbers}]. To do this, we begin with a lemma.

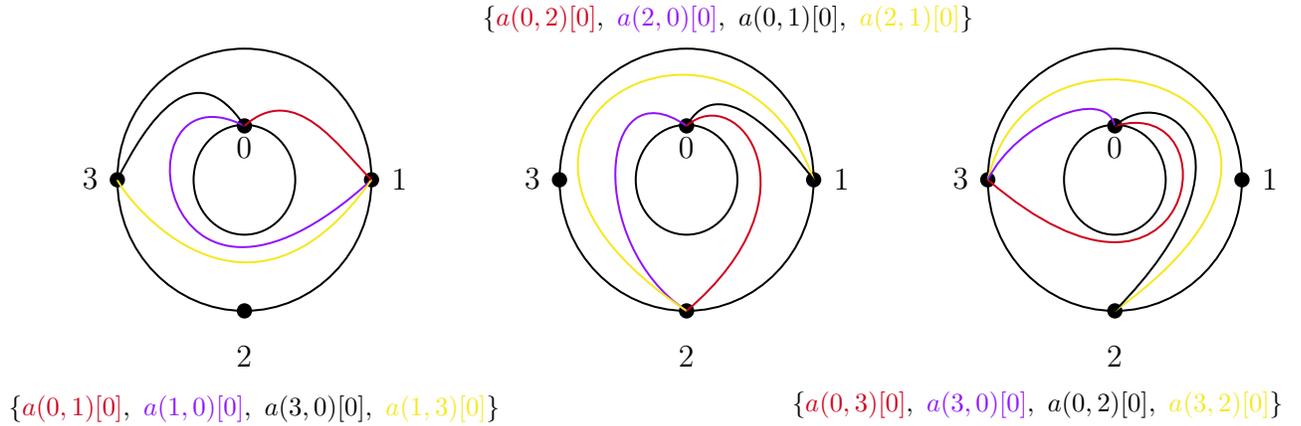
\begin{figure}
    \centering

\tikzset{every picture/.style={line width=0.75pt}} %set default line width to 0.75pt        

\begin{tikzpicture}[x=0.75pt,y=0.75pt,yscale=-1,xscale=1]
%uncomment if require: \path (0,300); %set diagram left start at 0, and has height of 300

%Shape: Donut [id:dp3650995546098128] 
\draw   (104.58,113.87) .. controls (104.58,98.56) and (116.06,86.15) .. (130.22,86.15) .. controls (144.38,86.15) and (155.86,98.56) .. (155.86,113.87) .. controls (155.86,129.18) and (144.38,141.59) .. (130.22,141.59) .. controls (116.06,141.59) and (104.58,129.18) .. (104.58,113.87)(66.13,113.87) .. controls (66.13,77.32) and (94.82,47.69) .. (130.22,47.69) .. controls (165.62,47.69) and (194.32,77.32) .. (194.32,113.87) .. controls (194.32,150.42) and (165.62,180.05) .. (130.22,180.05) .. controls (94.82,180.05) and (66.13,150.42) .. (66.13,113.87) ;
%Straight Lines [id:da7622660688542426] 
\draw    (130.22,86.68) ;
\draw [shift={(130.22,86.68)}, rotate = 0] [color={rgb, 255:red, 0; green, 0; blue, 0 }  ][fill={rgb, 255:red, 0; green, 0; blue, 0 }  ][line width=0.75]      (0, 0) circle [x radius= 3.35, y radius= 3.35]   ;
%Straight Lines [id:da9654444947152969] 
\draw    (194.32,113.87) ;
\draw [shift={(194.32,113.87)}, rotate = 0] [color={rgb, 255:red, 0; green, 0; blue, 0 }  ][fill={rgb, 255:red, 0; green, 0; blue, 0 }  ][line width=0.75]      (0, 0) circle [x radius= 3.35, y radius= 3.35]   ;
%Straight Lines [id:da6734063142807942] 
\draw    (130.22,180.05) ;
\draw [shift={(130.22,180.05)}, rotate = 0] [color={rgb, 255:red, 0; green, 0; blue, 0 }  ][fill={rgb, 255:red, 0; green, 0; blue, 0 }  ][line width=0.75]      (0, 0) circle [x radius= 3.35, y radius= 3.35]   ;
%Straight Lines [id:da7849939557317305] 
\draw    (66.13,113.87) ;
\draw [shift={(66.13,113.87)}, rotate = 0] [color={rgb, 255:red, 0; green, 0; blue, 0 }  ][fill={rgb, 255:red, 0; green, 0; blue, 0 }  ][line width=0.75]      (0, 0) circle [x radius= 3.35, y radius= 3.35]   ;
%Curve Lines [id:da33733274917588596] 
\draw [color={rgb, 255:red, 144; green, 19; blue, 254 }  ,draw opacity=1 ]   (130.22,86.68) .. controls (65.42,53.67) and (82.91,215.52) .. (194.32,113.87) ;
%Curve Lines [id:da18951018814891074] 
\draw [color={rgb, 255:red, 248; green, 231; blue, 28 }  ,draw opacity=1 ]   (66.13,113.87) .. controls (70.89,124.62) and (132.35,202.09) .. (194.32,113.87) ;
%Curve Lines [id:da029400155585297183] 
\draw [color={rgb, 255:red, 0; green, 0; blue, 0 }  ,draw opacity=1 ]   (66.13,113.87) .. controls (80.73,84.71) and (104.79,48.13) .. (130.22,86.68) ;
%Curve Lines [id:da12719489240176207] 
\draw [color={rgb, 255:red, 208; green, 2; blue, 27 }  ,draw opacity=1 ]   (130.22,86.68) .. controls (156.18,63.65) and (178.05,98.01) .. (194.32,113.87) ;
%Shape: Donut [id:dp03389080303788483] 
\draw   (327.58,113.87) .. controls (327.58,98.56) and (339.06,86.15) .. (353.22,86.15) .. controls (367.38,86.15) and (378.86,98.56) .. (378.86,113.87) .. controls (378.86,129.18) and (367.38,141.59) .. (353.22,141.59) .. controls (339.06,141.59) and (327.58,129.18) .. (327.58,113.87)(289.13,113.87) .. controls (289.13,77.32) and (317.82,47.69) .. (353.22,47.69) .. controls (388.62,47.69) and (417.32,77.32) .. (417.32,113.87) .. controls (417.32,150.42) and (388.62,180.05) .. (353.22,180.05) .. controls (317.82,180.05) and (289.13,150.42) .. (289.13,113.87) ;
%Straight Lines [id:da5016787314304745] 
\draw    (353.22,86.68) ;
\draw [shift={(353.22,86.68)}, rotate = 0] [color={rgb, 255:red, 0; green, 0; blue, 0 }  ][fill={rgb, 255:red, 0; green, 0; blue, 0 }  ][line width=0.75]      (0, 0) circle [x radius= 3.35, y radius= 3.35]   ;
%Straight Lines [id:da5950887280431321] 
\draw    (417.32,113.87) ;
\draw [shift={(417.32,113.87)}, rotate = 0] [color={rgb, 255:red, 0; green, 0; blue, 0 }  ][fill={rgb, 255:red, 0; green, 0; blue, 0 }  ][line width=0.75]      (0, 0) circle [x radius= 3.35, y radius= 3.35]   ;
%Straight Lines [id:da19144298566217954] 
\draw    (353.22,180.05) ;
\draw [shift={(353.22,180.05)}, rotate = 0] [color={rgb, 255:red, 0; green, 0; blue, 0 }  ][fill={rgb, 255:red, 0; green, 0; blue, 0 }  ][line width=0.75]      (0, 0) circle [x radius= 3.35, y radius= 3.35]   ;
%Straight Lines [id:da25239621558382197] 
\draw    (289.13,113.87) ;
\draw [shift={(289.13,113.87)}, rotate = 0] [color={rgb, 255:red, 0; green, 0; blue, 0 }  ][fill={rgb, 255:red, 0; green, 0; blue, 0 }  ][line width=0.75]      (0, 0) circle [x radius= 3.35, y radius= 3.35]   ;
%Curve Lines [id:da0688015669650468] 
\draw [color={rgb, 255:red, 144; green, 19; blue, 254 }  ,draw opacity=1 ]   (353.22,86.68) .. controls (308.35,55.39) and (302.35,146.39) .. (353.22,180.05) ;
%Curve Lines [id:da5977942682962976] 
\draw [color={rgb, 255:red, 248; green, 231; blue, 28 }  ,draw opacity=1 ]   (417.32,113.87) .. controls (378.35,2.39) and (214.35,87.39) .. (353.22,180.05) ;
%Curve Lines [id:da9478669735596965] 
\draw [color={rgb, 255:red, 0; green, 0; blue, 0 }  ,draw opacity=1 ]   (417.32,113.87) .. controls (400.35,91.39) and (369.35,58.39) .. (353.22,86.68) ;
%Curve Lines [id:da7273076556712337] 
\draw [color={rgb, 255:red, 208; green, 2; blue, 27 }  ,draw opacity=1 ]   (353.22,86.68) .. controls (379.18,63.65) and (422.35,121.39) .. (353.22,180.05) ;
%Shape: Donut [id:dp8111027509334505] 
\draw   (543.58,113.87) .. controls (543.58,98.56) and (555.06,86.15) .. (569.22,86.15) .. controls (583.38,86.15) and (594.86,98.56) .. (594.86,113.87) .. controls (594.86,129.18) and (583.38,141.59) .. (569.22,141.59) .. controls (555.06,141.59) and (543.58,129.18) .. (543.58,113.87)(505.13,113.87) .. controls (505.13,77.32) and (533.82,47.69) .. (569.22,47.69) .. controls (604.62,47.69) and (633.32,77.32) .. (633.32,113.87) .. controls (633.32,150.42) and (604.62,180.05) .. (569.22,180.05) .. controls (533.82,180.05) and (505.13,150.42) .. (505.13,113.87) ;
%Straight Lines [id:da7730092117575487] 
\draw    (569.22,86.68) ;
\draw [shift={(569.22,86.68)}, rotate = 0] [color={rgb, 255:red, 0; green, 0; blue, 0 }  ][fill={rgb, 255:red, 0; green, 0; blue, 0 }  ][line width=0.75]      (0, 0) circle [x radius= 3.35, y radius= 3.35]   ;
%Straight Lines [id:da2513938295248652] 
\draw    (633.32,113.87) ;
\draw [shift={(633.32,113.87)}, rotate = 0] [color={rgb, 255:red, 0; green, 0; blue, 0 }  ][fill={rgb, 255:red, 0; green, 0; blue, 0 }  ][line width=0.75]      (0, 0) circle [x radius= 3.35, y radius= 3.35]   ;
%Straight Lines [id:da7893374337198811] 
\draw    (569.22,180.05) ;
\draw [shift={(569.22,180.05)}, rotate = 0] [color={rgb, 255:red, 0; green, 0; blue, 0 }  ][fill={rgb, 255:red, 0; green, 0; blue, 0 }  ][line width=0.75]      (0, 0) circle [x radius= 3.35, y radius= 3.35]   ;
%Straight Lines [id:da27831194431640327] 
\draw    (505.13,113.87) ;
\draw [shift={(505.13,113.87)}, rotate = 0] [color={rgb, 255:red, 0; green, 0; blue, 0 }  ][fill={rgb, 255:red, 0; green, 0; blue, 0 }  ][line width=0.75]      (0, 0) circle [x radius= 3.35, y radius= 3.35]   ;
%Curve Lines [id:da29085383161205436] 
\draw [color={rgb, 255:red, 208; green, 2; blue, 27 }  ,draw opacity=1 ]   (569.22,86.68) .. controls (630.35,69.39) and (610.35,204.39) .. (505.13,113.87) ;
%Curve Lines [id:da22070565344298232] 
\draw [color={rgb, 255:red, 248; green, 231; blue, 28 }  ,draw opacity=1 ]   (505.13,113.87) .. controls (529.35,12.39) and (711.35,78.39) .. (569.22,180.05) ;
%Curve Lines [id:da7866512218398602] 
\draw [color={rgb, 255:red, 144; green, 19; blue, 254 }  ,draw opacity=1 ]   (505.13,113.87) .. controls (519.73,84.71) and (564.35,66.39) .. (569.22,86.68) ;
%Curve Lines [id:da847440142680153] 
\draw [color={rgb, 255:red, 0; green, 0; blue, 0 }  ,draw opacity=1 ]   (569.22,86.68) .. controls (595.18,63.65) and (646.35,101.39) .. (569.22,180.05) ;

% Text Node
\draw (124.65,91.09) node [anchor=north west][inner sep=0.75pt]    {$0$};
% Text Node
\draw (202.96,106.79) node [anchor=north west][inner sep=0.75pt]    {$1$};
% Text Node
\draw (124.65,196.4) node [anchor=north west][inner sep=0.75pt]    {$2$};
% Text Node
\draw (47.01,106.61) node [anchor=north west][inner sep=0.75pt]    {$3$};
% Text Node
\draw (347.65,91.09) node [anchor=north west][inner sep=0.75pt]    {$0$};
% Text Node
\draw (425.96,106.79) node [anchor=north west][inner sep=0.75pt]    {$1$};
% Text Node
\draw (347.65,196.4) node [anchor=north west][inner sep=0.75pt]    {$2$};
% Text Node
\draw (270.01,106.61) node [anchor=north west][inner sep=0.75pt]    {$3$};
% Text Node
\draw (563.65,91.09) node [anchor=north west][inner sep=0.75pt]    {$0$};
% Text Node
\draw (641.96,106.79) node [anchor=north west][inner sep=0.75pt]    {$1$};
% Text Node
\draw (563.65,196.4) node [anchor=north west][inner sep=0.75pt]    {$2$};
% Text Node
\draw (486.01,106.61) node [anchor=north west][inner sep=0.75pt]    {$3$};
% Text Node
\draw (10,221.4) node [anchor=north west][inner sep=0.75pt]  [font=\footnotesize]  {$\{\textcolor[rgb]{0.82,0.01,0.11}{a( 0,1)[ 0]} ,\ \textcolor[rgb]{0.56,0.07,1}{a( 1,0)[ 0]} ,\ a( 3,0)[ 0] ,\textcolor[rgb]{0.97,0.91,0.11}{\ a( 1,3)[0]}\}$};
% Text Node
\draw (249,24.4) node [anchor=north west][inner sep=0.75pt]  [font=\footnotesize]  {$\{\textcolor[rgb]{0.82,0.01,0.11}{a}\textcolor[rgb]{0.82,0.01,0.11}{(}\textcolor[rgb]{0.82,0.01,0.11}{0,2}\textcolor[rgb]{0.82,0.01,0.11}{)}\textcolor[rgb]{0.82,0.01,0.11}{[}\textcolor[rgb]{0.82,0.01,0.11}{0}\textcolor[rgb]{0.82,0.01,0.11}{]} ,\ \textcolor[rgb]{0.56,0.07,1}{a}\textcolor[rgb]{0.56,0.07,1}{(}\textcolor[rgb]{0.56,0.07,1}{2,0}\textcolor[rgb]{0.56,0.07,1}{)}\textcolor[rgb]{0.56,0.07,1}{[}\textcolor[rgb]{0.56,0.07,1}{0}\textcolor[rgb]{0.56,0.07,1}{]} ,\ a( 0,1)[ 0] ,\textcolor[rgb]{0.97,0.91,0.11}{\ a}\textcolor[rgb]{0.97,0.91,0.11}{(}\textcolor[rgb]{0.97,0.91,0.11}{2,1}\textcolor[rgb]{0.97,0.91,0.11}{)[0]}\}$};
% Text Node
\draw (405,219.4) node [anchor=north west][inner sep=0.75pt]  [font=\footnotesize]  {$\{\textcolor[rgb]{0.82,0.01,0.11}{a}\textcolor[rgb]{0.82,0.01,0.11}{(}\textcolor[rgb]{0.82,0.01,0.11}{0,3}\textcolor[rgb]{0.82,0.01,0.11}{)}\textcolor[rgb]{0.82,0.01,0.11}{[}\textcolor[rgb]{0.82,0.01,0.11}{0}\textcolor[rgb]{0.82,0.01,0.11}{]} ,\ \textcolor[rgb]{0.56,0.07,1}{a}\textcolor[rgb]{0.56,0.07,1}{(}\textcolor[rgb]{0.56,0.07,1}{3,0}\textcolor[rgb]{0.56,0.07,1}{)}\textcolor[rgb]{0.56,0.07,1}{[}\textcolor[rgb]{0.56,0.07,1}{0}\textcolor[rgb]{0.56,0.07,1}{]} ,\ a( 0,2)[ 0] ,\textcolor[rgb]{0.97,0.91,0.11}{\ a}\textcolor[rgb]{0.97,0.91,0.11}{(}\textcolor[rgb]{0.97,0.91,0.11}{3,2}\textcolor[rgb]{0.97,0.91,0.11}{)[0]}\}$};

\end{tikzpicture}

    \caption{The outer equivalence class of the triangulation in Figure \ref{fig: Example of cluster triangulation bijection}}
    \label{fig: Example of orbit}
\end{figure}

\begin{lem} \label{lem: All triangulations have a heart}
Any small triangulation of $A_{Q^{\bm\varepsilon}}$ contains arcs of the form $a(0,i)[0]$ and $a(i,0)[0]$ for some $i$.
\end{lem}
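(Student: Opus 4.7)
The plan is to pass to the universal cover $\mathbb{R}\times[0,1]$ of $A_{Q^{\bm\varepsilon}}$ described in Section~\ref{sec: cluster combinatorics}, where the single inner marked point $0$ lifts to the points $\mathbf{0}_k := (kn,0)$ for $k\in\mathbb{Z}$ and each outer marked point $i\in\{1,\dots,n\}$ lifts to $\mathbf{i}_k := (i-1/2+kn,1)$. In these coordinates the smallness hypothesis becomes the concrete constraint that every bridging arc of $T$ lifts to an edge contained in a single fundamental domain: $a(0,i)[0]$ lifts to $[\mathbf{0}_k,\mathbf{i}_k]$, while $a(i,0)[0]$ lifts to $[\mathbf{i}_k,\mathbf{0}_{k+1}]$.

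The key observation is that the inner boundary segment $[\mathbf{0}_0,\mathbf{0}_1]$ in the cover is incident to a unique triangle face of (the lift of) $T$---a triangle with $\mathbf{0}_0,\mathbf{0}_1$ as two of its vertices and some third vertex $v$, whose remaining two edges are $e_0=[\mathbf{0}_0,v]$ and $e_1=[\mathbf{0}_1,v]$. I claim $v$ must be an outer lift $\mathbf{i}_0$ for some $i\in\{1,\dots,n\}$. Granting this, the smallness hypothesis identifies $e_0$ with the lift of $a(0,i)[0]$ and $e_1$ with the lift of $a(i,0)[0]$, so both arcs appear in $T$ for the same index $i$, proving the lemma. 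The identification $v=\mathbf{i}_0$ among outer vertices is automatic from smallness alone: the candidates for $v$ at small-arc distance from $\mathbf{0}_0$ are $\{\mathbf{i}_{-1},\mathbf{i}_0\}$ and from $\mathbf{0}_1$ are $\{\mathbf{i}_0,\mathbf{i}_1\}$, intersecting in $\mathbf{i}_0$.

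The heart of the argument is ruling out that $v=\mathbf{0}_m$ is another inner lift (with $m\neq 0,1$), which would make $e_0,e_1$ lifts of loops at $0$. My strategy is to contradict the maximality built into being a triangulation. When $v=\mathbf{0}_m$, all three vertices of the triangle lie on $y=0$, so the face lives strictly above $y=0$ in the strip, but its combinatorial boundary contains at least one intermediate inner lift $\mathbf{0}_k$ (sitting on $y=0$ between $\mathbf{0}_0$ and $v$ or between $\mathbf{0}_1$ and $v$). A short winding-$1$ loop-lift based at $\mathbf{0}_k$ curving just above $y=0$ stays entirely inside this face and does not cross $e_0,e_1,$ or the inner boundary; its periodic translates likewise fit into the analogous faces adjacent to each inner segment $[\mathbf{0}_j,\mathbf{0}_{j+1}]$. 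Adding the associated arc $a(0,0)[\pm1]$ to $T$ therefore produces a strictly larger non-crossing arc system on the annulus, contradicting maximality. I expect the main obstacle to be carefully handling the boundary sub-cases $m\in\{-1,2\}$, in which one of $e_0,e_1$ is itself an inner boundary segment rather than an interior arc: there one must check that the added short loop remains strictly inside the face and is compatible with the periodic lifting, so that it descends to a single new loop on the annulus rather than a collection of arcs that intersect their own translates.
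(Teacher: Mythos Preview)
Your universal-cover approach is genuinely different from the paper's proof (which is a pure arc-counting argument: assuming no outer vertex $i$ receives both an incoming and an outgoing bridging arc, one tallies the bridging arcs and the outer arcs filling the sectors between them and finds at most $n$ arcs total, contradicting $|T|=n+1$). Your idea of looking at the unique triangle on the inner boundary edge $[\mathbf 0_0,\mathbf 0_1]$ is cleaner and more geometric, and your Case~1 analysis (using smallness to pin down the third vertex as $\mathbf i_0$) is correct.

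The gap is in Case~2. The arc $a(0,0)[\pm1]$ that you propose to add is homotopic rel endpoints to the inner boundary circle, so by the paper's definition it is a \emph{boundary arc} and is explicitly excluded from the arcs allowed in a triangulation. Adding it therefore does not contradict maximality, and your contradiction evaporates. (There is also a geometric slip: the points $\mathbf 0_k$ you call ``intermediate inner lifts'' do not lie on the boundary of the triangular face---the sides $e_0,e_1$ curve into the open strip and avoid all marked points in their interiors---so your short loop is not actually contained in that face.)

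The repair is easier than what you attempted: just rule out $v=\mathbf 0_m$ directly. If $v=\mathbf 0_m$ with $m\notin\{0,1\}$, the two non-boundary sides $e_0=[\mathbf 0_0,\mathbf 0_m]$ and $e_1=[\mathbf 0_1,\mathbf 0_m]$ project to loops at $0$ of winding $|m|$ and $|m-1|$. A winding-$1$ loop is a boundary arc (hence not an arc of $T$), while a loop of winding $\ge 2$ self-intersects on the annulus: its lift from $\mathbf 0_0$ to $\mathbf 0_w$ together with the segment on $y=0$ bounds a disk containing $\mathbf 0_1$, and the period-translate, which starts at $\mathbf 0_1$ and ends at $\mathbf 0_{w+1}$ outside that disk, must cross it. Since for every $m\notin\{0,1\}$ at least one of $|m|,|m-1|$ is $\ge 2$, at least one of $e_0,e_1$ is neither a boundary arc nor a simple arc of $T$, a contradiction. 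Hence $v$ is an outer lift and your Case~1 finishes the proof.
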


\begin{proof}
Suppose $T$ is a triangulation of $A_{Q^{\bm\varepsilon}}$ and that there do not exist bridging arcs such that one ends at $i$ and the other begins at $i$. Since $T$ is a triangulation, there can be at most $n-2$ arcs connecting marked points on the outer circle. Whence for $j\geq 3$, suppose the only arcs in $T$ that begin or end at zero are $a_1, a_2, \dots , a_j$. Let $i_1<i_2<\dots<i_j$ be the nonzero endpoints of these arcs. Since $T$ is a triangulation, there must be precisely $i_{l+1} - i_l - 1$ arcs that begin and end between $i_{l+1}$ and $i_l$ when $l\neq j$ and $n-i_j$ arcs when $l=j$. We conclude that the total number of arcs in $T$ is 
$$j + n-i_j + \sum_{l=1}^{j-1} i_{l+1}-i_l-1 = j + n-i_j + i_j - i_1-j+1 = n-i_1+1.$$

\noindent
Since $i_1 \geq 1$ we have $n-i_1+1 \leq n$ which contradicts the assumption that $T$ is a triangulation. Thus any triangulation contains bridging arcs such that one ends at $i$ and the other begins at $i$. Since $T$ is small, they must be of the form $a(0,i)[0]$ and $a(i,0)[0]$. 
\end{proof}

This lemma allows us to prove that families of triangulations are in bijection with small triangulations; contrary to the case of exceptional collections in which we needed equivalence classes of small diagrams to form the analogous bijection.

\begin{lem}\label{lem: unique small triangulation}
Each family of triangulations contains a unique small triangulation.
\end{lem}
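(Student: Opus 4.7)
The plan is to establish existence and uniqueness separately, with Lemma~\ref{lem: All triangulations have a heart} providing the essential leverage for uniqueness.

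For uniqueness, I would argue that no nontrivial $2\pi$ Dehn twist about the inner boundary can preserve smallness. Suppose $T$ is small; by the heart lemma there exists an outer vertex $i$ such that both $a(0,i)[0]$ and $a(i,0)[0]$ belong to $T$. A single $2\pi$ clockwise inner Dehn twist uniformly shifts the winding parameter of every bridging arc by $+1$, and using the convention $a(0,i)[\lambda]=a(i,0)[\lambda-1]$ for $\lambda\geq 1$, the arc $a(i,0)[0]$ gets sent to $a(i,0)[1]=a(0,i)[2]$, whose winding parameter $\lambda=2$ is nonzero. Hence the twisted triangulation is no longer small. A symmetric computation for counterclockwise twists (and their iterates) then shows that at most one small triangulation can sit in each family.

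For existence, I would take an arbitrary triangulation $T$ and normalize its bridging arc parameters. The key input is the fact, noted earlier in the section, that the parameters of all bridging arcs in a triangulation differ by at most $1$. Once every bridging arc is expressed under a single consistent form (say $a(0,i)[\lambda]$, possibly using the convention to rewrite $a(i,0)[\mu]$ as $a(0,i)[\mu+1]$), there is an integer $m$ such that after $m$ clockwise $2\pi$ inner Dehn twists of $T$, every bridging arc has winding parameter lying in $\{0,1\}$. Rewriting any remaining arc with $\lambda=1$ in the equivalent form $a(i,0)[0]$ shows that the resulting triangulation has every bridging arc with $\lambda=0$, and is therefore small.

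The main obstacle I anticipate is the careful book-keeping of how Dehn twists act on the parameter $\lambda$ when each bridging arc admits two names (under the $a(0,i)$ and $a(i,0)$ conventions), together with a clean formulation of the ``differ by at most $1$'' statement that is valid uniformly across both forms. Once this geometric set-up is pinned down, both halves reduce to routine verification, and the heart lemma does the real work in excluding any second small triangulation from the family.
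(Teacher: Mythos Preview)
Your proposal is correct and follows essentially the same route as the paper: uniqueness via Lemma~\ref{lem: All triangulations have a heart} (a single clockwise twist sends $a(i,0)[0]$ to $a(i,0)[1]$, a single counterclockwise twist sends $a(0,i)[0]$ to $a(0,i)[-1]$, and iterates only push the parameter further from $0$), while existence is obtained by normalizing the bridging parameters. Two small remarks: the rewriting $a(i,0)[1]=a(0,i)[2]$ in your uniqueness step is unnecessary, since $a(i,0)[1]$ already has nonzero parameter; and for existence the paper simply invokes the earlier result that every family contains a small representative, whereas you sketch the parameter-normalization argument directly---your sketch is fine, and the ``differ by at most $1$'' fact you cite is exactly what makes it go through.
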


\begin{proof}
For any triangulation, there exists a small triangulation in the same family. Let $T$ be this small triangulation. Then by the previous lemma, $T$ contains arcs of the form $a(0,i)[0]$ and $a(i,0)[0]$. If there were another small triangulation in this family, it must differ from $T$ by a $2\pi$ Dehn twist about the inner circle. But notice that a $2\pi$ clockwise Dehn twist creates the arc $a(i,0)[1]$ and a $2\pi$ counter clockwise Dehn twist creates the arc $a(0,i)[-1]$. In either case, the diagram is no longer small, so $T$ is unique.  
\end{proof}

By Lemmas \ref{lem: All triangulations have a heart} and \ref{lem: unique small triangulation}, it suffices to count the number of small triangulations containing the arcs $a(0,1)$ and $a(1,0)$ since triangulations are preserved under the outer rotation action and every outer equivalence class contains a unique small triangulation with these arcs.

\begin{lem}\label{lem: Counting number of triangulations}
The number of small triangulations of the annulus containing the arcs $a(0,1)$ and $a(1,0)$ is $C_n$ where $C_n$ is the $n^{th}$ Catalan number.
\end{lem}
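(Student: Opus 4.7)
The plan is to set up a bijection between small triangulations of $A_{Q^{\bm\varepsilon}}$ containing $\alpha := a(0,1)[0]$ and $\beta := a(1,0)[0]$ and triangulations of a convex $(n+2)$-gon, then invoke the classical Catalan count $C_n$ for such triangulations.

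First, I would use the universal cover from Section 0.1.2 to describe how $\alpha$ and $\beta$ divide $A_{Q^{\bm\varepsilon}}$. The arc $\alpha$ is represented by a segment from the inner-boundary marked point $(0,0)$ to the outer-boundary marked point $(0.5, 1)$, while $\beta$ is represented by a segment from $(0.5, 1)$ back down to the adjacent copy $(n, 0)$ of the inner-boundary marked point. Together these two lifts form a V spanning one fundamental domain, so in $A_{Q^{\bm\varepsilon}}$ the graph $\alpha \cup \beta$ cuts the annulus into exactly two regions. An Euler-characteristic computation with $V = n + 1$ marked vertices and $E = n + 3$ edges ($\alpha$, $\beta$, the $n$ outer-boundary edges, and the inner-boundary loop at $0$) yields $F = 2$ faces, each an open disk.

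Next, I would analyze the combinatorial structure of each face. Let $R_s$ denote the face whose boundary consists of $\alpha$, $\beta$, and the inner-boundary loop at $0$; its boundary has three edges and it contains no additional marked points, so $R_s$ is already a combinatorial triangle and admits no further diagonals. Let $R_b$ denote the other face; its boundary traverses $\alpha$, the entire outer boundary from $1$ through $2, 3, \dots, n$ back to $1$, and then $\beta$. This cycle has $n + 2$ edges and $n + 2$ vertex-positions (with $0$ appearing once, $1$ appearing twice where $\alpha$ and $\beta$ meet outer $1$ from opposite sides, and each of $2, \dots, n$ appearing once), so $R_b$ is combinatorially a convex $(n+2)$-gon.

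Finally, I would assemble the bijection. A small triangulation of $A_{Q^{\bm\varepsilon}}$ containing $\alpha$ and $\beta$ has $n + 1$ arcs in total, hence $n - 1$ additional arcs; since $R_s$ has no room for further arcs, these must lie entirely in $R_b$ and form a polygon triangulation of the $(n+2)$-gon. Conversely, any triangulation of $R_b$ by $n - 1$ non-crossing diagonals extends to a small triangulation of $A_{Q^{\bm\varepsilon}}$: each diagonal lies inside $R_b$, which is disjoint from the interior of the inner-encircling loop $\alpha \cup \beta$, so the resulting arc on the annulus automatically has winding parameter $\lambda = 0$ and the whole triangulation is small. The standard enumeration then gives $C_n$ triangulations of a convex $(n+2)$-gon. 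The most delicate step is the polygon identification: one must check that the two occurrences of the vertex $1$ in $\partial R_b$ are legitimately distinct corners of an $(n+2)$-gon and that every diagonal—including any between these two copies of $1$—corresponds to a valid, non-self-crossing arc on $A_{Q^{\bm\varepsilon}}$.
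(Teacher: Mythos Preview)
Your argument is correct and follows essentially the same route as the paper's proof: both cut the annulus along the two given bridging arcs and reduce the remaining count to triangulations of a convex $(n+2)$-gon. The only cosmetic difference is that the paper phrases the cut in the universal cover (so that the $(n+2)$-gon appears with genuinely distinct vertex labels $1,2,\dots,n,n+1,n+2$), whereas you cut directly on the annulus and obtain the same polygon with the vertex $1$ repeated; your flagged ``delicate step'' about the two copies of $1$ is exactly what the cover picture handles automatically, and the diagonal between them does correspond to a legitimate (exterior, non-bridging) arc, so the bijection and the smallness check go through.
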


\begin{proof}
Since the triangulation contains the arcs $a(0,1)$ and $a(1,0)$, the universal cover appears as it does on the left of Figure \ref{fig: Lift of Triangulation}. To triangulate the annulus, it suffices to triangulate the polygon on the right of Figure \ref{fig: Lift of Triangulation} and there are precisely $C_n$ triangulations of a convex $(n+2)$-gon [\ref{ref: Catalan Numbers}]. \\

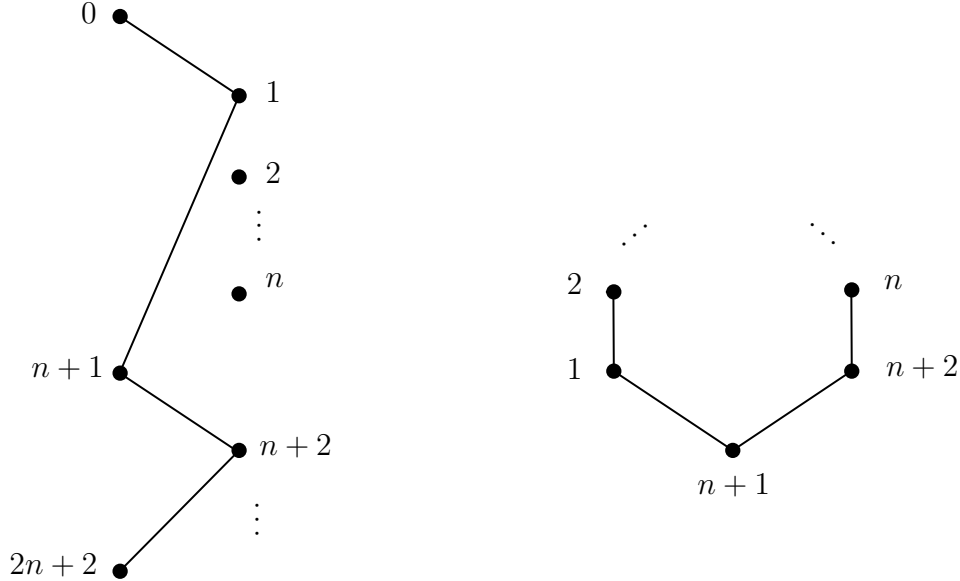
\begin{figure}[h!]
    \centering

\tikzset{every picture/.style={line width=0.75pt}} %set default line width to 0.75pt        

\begin{tikzpicture}[x=0.75pt,y=0.75pt,yscale=-1,xscale=1]
%uncomment if require: \path (0,339); %set diagram left start at 0, and has height of 339

%Straight Lines [id:da007895369001456265] 
\draw    (69.22,21.01) ;
\draw [shift={(69.22,21.01)}, rotate = 0] [color={rgb, 255:red, 0; green, 0; blue, 0 }  ][fill={rgb, 255:red, 0; green, 0; blue, 0 }  ][line width=0.75]      (0, 0) circle [x radius= 3.35, y radius= 3.35]   ;
%Straight Lines [id:da9971264003594567] 
\draw    (129.22,61.01) ;
\draw [shift={(129.22,61.01)}, rotate = 0] [color={rgb, 255:red, 0; green, 0; blue, 0 }  ][fill={rgb, 255:red, 0; green, 0; blue, 0 }  ][line width=0.75]      (0, 0) circle [x radius= 3.35, y radius= 3.35]   ;
%Straight Lines [id:da7002485042118256] 
\draw    (129.22,102.01) ;
\draw [shift={(129.22,102.01)}, rotate = 0] [color={rgb, 255:red, 0; green, 0; blue, 0 }  ][fill={rgb, 255:red, 0; green, 0; blue, 0 }  ][line width=0.75]      (0, 0) circle [x radius= 3.35, y radius= 3.35]   ;
%Straight Lines [id:da14254685086218166] 
\draw    (129.22,161.01) ;
\draw [shift={(129.22,161.01)}, rotate = 0] [color={rgb, 255:red, 0; green, 0; blue, 0 }  ][fill={rgb, 255:red, 0; green, 0; blue, 0 }  ][line width=0.75]      (0, 0) circle [x radius= 3.35, y radius= 3.35]   ;
%Straight Lines [id:da9476999840573783] 
\draw    (69.22,201.01) ;
\draw [shift={(69.22,201.01)}, rotate = 0] [color={rgb, 255:red, 0; green, 0; blue, 0 }  ][fill={rgb, 255:red, 0; green, 0; blue, 0 }  ][line width=0.75]      (0, 0) circle [x radius= 3.35, y radius= 3.35]   ;
%Straight Lines [id:da2569611015011375] 
\draw    (129.22,240.01) ;
\draw [shift={(129.22,240.01)}, rotate = 0] [color={rgb, 255:red, 0; green, 0; blue, 0 }  ][fill={rgb, 255:red, 0; green, 0; blue, 0 }  ][line width=0.75]      (0, 0) circle [x radius= 3.35, y radius= 3.35]   ;
%Straight Lines [id:da9487703622500832] 
\draw    (68.22,301.01) -- (69.22,301.01) ;
\draw [shift={(69.22,301.01)}, rotate = 0] [color={rgb, 255:red, 0; green, 0; blue, 0 }  ][fill={rgb, 255:red, 0; green, 0; blue, 0 }  ][line width=0.75]      (0, 0) circle [x radius= 3.35, y radius= 3.35]   ;
%Straight Lines [id:da6457030011852083] 
\draw    (69,21) -- (129.22,61.01) ;
%Straight Lines [id:da8307306569869866] 
\draw    (69,201) -- (129.22,241.01) ;
%Straight Lines [id:da9047117319794054] 
\draw    (129.22,240.01) -- (69,301) ;
%Straight Lines [id:da33350186651835956] 
\draw    (129.22,61.01) -- (69,200.91) ;
%Straight Lines [id:da49958186314561215] 
\draw    (318.22,201.01) -- (318.22,200.01) ;
\draw [shift={(318.22,200.01)}, rotate = 270] [color={rgb, 255:red, 0; green, 0; blue, 0 }  ][fill={rgb, 255:red, 0; green, 0; blue, 0 }  ][line width=0.75]      (0, 0) circle [x radius= 3.35, y radius= 3.35]   ;
%Straight Lines [id:da629815735253251] 
\draw    (314.22,160.01) -- (318.22,160.01) ;
\draw [shift={(318.22,160.01)}, rotate = 0] [color={rgb, 255:red, 0; green, 0; blue, 0 }  ][fill={rgb, 255:red, 0; green, 0; blue, 0 }  ][line width=0.75]      (0, 0) circle [x radius= 3.35, y radius= 3.35]   ;
%Straight Lines [id:da8201363980710161] 
\draw    (438.22,159.01) ;
\draw [shift={(438.22,159.01)}, rotate = 0] [color={rgb, 255:red, 0; green, 0; blue, 0 }  ][fill={rgb, 255:red, 0; green, 0; blue, 0 }  ][line width=0.75]      (0, 0) circle [x radius= 3.35, y radius= 3.35]   ;
%Straight Lines [id:da7709145563188071] 
\draw    (438.22,200.01) ;
\draw [shift={(438.22,200.01)}, rotate = 0] [color={rgb, 255:red, 0; green, 0; blue, 0 }  ][fill={rgb, 255:red, 0; green, 0; blue, 0 }  ][line width=0.75]      (0, 0) circle [x radius= 3.35, y radius= 3.35]   ;
%Straight Lines [id:da19545532791739184] 
\draw    (378.22,239.01) -- (378.22,240.01) ;
\draw [shift={(378.22,240.01)}, rotate = 90] [color={rgb, 255:red, 0; green, 0; blue, 0 }  ][fill={rgb, 255:red, 0; green, 0; blue, 0 }  ][line width=0.75]      (0, 0) circle [x radius= 3.35, y radius= 3.35]   ;
%Straight Lines [id:da6629602681386177] 
\draw    (318,160) -- (318.22,200.01) ;
%Straight Lines [id:da05581027481490497] 
\draw    (438,159) -- (438.22,199.01) ;
%Straight Lines [id:da6143257398873461] 
\draw    (438,200) -- (378.22,240.01) ;
%Straight Lines [id:da2291517604000588] 
\draw    (378.22,240.01) -- (318.22,200.01) ;
% Text Node
\draw (141.47,115.92) node [anchor=north west][inner sep=0.75pt]  [rotate=-89.41]  {$\dotsc $};
% Text Node
\draw (140.47,263.92) node [anchor=north west][inner sep=0.75pt]  [rotate=-89.41]  {$\dotsc $};
% Text Node
\draw (48,12.4) node [anchor=north west][inner sep=0.75pt]    {$0$};
% Text Node
\draw (141,52.4) node [anchor=north west][inner sep=0.75pt]    {$1$};
% Text Node
\draw (141,93.4) node [anchor=north west][inner sep=0.75pt]    {$2$};
% Text Node
\draw (141,149.4) node [anchor=north west][inner sep=0.75pt]    {$n$};
% Text Node
\draw (23,191.4) node [anchor=north west][inner sep=0.75pt]    {$n+1$};
% Text Node
\draw (138,230.4) node [anchor=north west][inner sep=0.75pt]    {$n+2$};
% Text Node
\draw (12,290.4) node [anchor=north west][inner sep=0.75pt]    {$2n+2$};
% Text Node
\draw (319.17,136.63) node [anchor=north west][inner sep=0.75pt]  [rotate=-319.62]  {$\dotsc $};
% Text Node
\draw (417.15,121.74) node [anchor=north west][inner sep=0.75pt]  [rotate=-40.91]  {$\dotsc $};
% Text Node
\draw (293,192.4) node [anchor=north west][inner sep=0.75pt]    {$1$};
% Text Node
\draw (293,149.4) node [anchor=north west][inner sep=0.75pt]    {$2$};
% Text Node
\draw (453,150.4) node [anchor=north west][inner sep=0.75pt]    {$n$};
% Text Node
\draw (454,190.4) node [anchor=north west][inner sep=0.75pt]    {$n+2$};
% Text Node
\draw (359,250.4) node [anchor=north west][inner sep=0.75pt]    {$n+1$};

\end{tikzpicture}
    \caption{The lift of a triangulation with the corresponding convex polygon}
    \label{fig: Lift of Triangulation}
\end{figure}
\end{proof}

From Lemma \ref{lem: Counting number of triangulations}, we conclude that there are $nC_n$ small triangulations of $A_Q$, and hence families of triangulations by Lemma \ref{lem: unique small triangulation}. Since clusters are in bijection with triangulations of $A_Q^{\bm{\varepsilon}}$, we attain the following theorem.

\begin{thm} \label{thm: counting clusters}
The number of parametrized families of clusters in $\mathcal{C}_{Q^{\bm{\varepsilon}}}$ is $nC_n$. \qed
\end{thm}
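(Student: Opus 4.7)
The plan is to assemble the preceding lemmas into a single counting argument, with no new combinatorics needed. First, I would invoke the bijection recalled in Section~1.2 between clusters of $\mathcal{C}_{Q^{\bm\varepsilon}}$ and triangulations of $A_{Q^{\bm\varepsilon}}$. Because the $2\pi$ inner Dehn twist about the inner boundary circle acts compatibly on both sides of this bijection, it descends to a bijection between parametrized families of clusters and parametrized families of triangulations. Hence it suffices to count the latter.

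Next I would reduce from families of triangulations to small triangulations: by Lemma~\ref{lem: unique small triangulation}, each parametrized family of triangulations of $A_{Q^{\bm\varepsilon}}$ contains exactly one small triangulation, so the count of families equals the count of small triangulations. To count these, I would use the outer rotation action. By Lemma~\ref{lem: All triangulations have a heart}, every small triangulation $T$ contains a ``heart'' pair $a(0,i)[0], a(i,0)[0]$ for some $i\in\{1,\dots,n\}$, and the $2\pi/n$ outer Dehn twist sends a small triangulation with heart at $i$ to one with heart at $i+1\pmod n$, while preserving smallness (winding numbers are unchanged). Thus every outer equivalence class of small triangulations has a representative containing the specific pair $a(0,1)[0], a(1,0)[0]$, and such a representative is unique within its class because any nontrivial outer twist relabels these two arcs. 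Lemma~\ref{lem: Counting number of triangulations} then counts the representatives as $C_n$, and since each outer orbit consists of exactly $n$ small triangulations (one per residue of the heart), the total number of small triangulations is $nC_n$, giving the same number of families of clusters.

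The main obstacle to guard against is the possibility that an outer orbit could have size strictly less than $n$, i.e., that some small triangulation is fixed by a nontrivial power of the outer rotation, which would cause overcounting when multiplying by $n$. To rule this out one can observe that a fixed triangulation would have a set of hearts invariant under a nontrivial cyclic subgroup of $\mathbb{Z}/n$; a straightforward adaptation of the arc-count in the proof of Lemma~\ref{lem: All triangulations have a heart} (which shows that a triangulation cannot support too many bridging arcs touching the vertex $0$) then forces a contradiction, so every outer orbit has size exactly $n$ and the product $nC_n$ is the final count.
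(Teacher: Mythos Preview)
Your proposal follows essentially the same route as the paper: reduce families of clusters to families of triangulations via the bijection in Section~1.2, identify families of triangulations with small triangulations via Lemma~\ref{lem: unique small triangulation}, and then count small triangulations by observing that the $\tfrac{2\pi}{n}$ outer rotation partitions them into orbits of size $n$, each containing a unique member with the pair $a(0,1)[0],a(1,0)[0]$, of which there are $C_n$ by Lemma~\ref{lem: Counting number of triangulations}. The paper simply asserts that each outer equivalence class contains exactly $n$ small triangulations and a unique one with this pair; you are right to flag this as the only point requiring care, and your sketch of how to rule out a short orbit is in the right spirit, though the paper leaves this implicit.

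One small inaccuracy: you write that the $\tfrac{2\pi}{n}$ outer twist ``preserves smallness (winding numbers are unchanged).'' This is not literally true; when a bridging arc has outer endpoint $n$, the twist shifts its winding number by $1$ (see the proof of Proposition~2.1). What is true, and what the paper actually uses, is that the outer twist is well-defined on \emph{families} (Proposition~2.1), and since families correspond bijectively to small triangulations (Lemma~\ref{lem: unique small triangulation}), one obtains an induced $\mathbb Z/n$-action on the set of small triangulations. Your argument goes through unchanged once phrased at that level.
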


\section{The Relationship Between the Exceptional and Cluster Conventions}

\indent 

As noted in Remark \ref{rem: difference between two conventions}, the cluster and exceptional conventions are indeed different. Moreover in triangulations, cycles and loops are allowed whereas they are not in fundamental arc diagrams. In this section we will illuminate precisely when the two conventions coincide when $Q$ is a quiver of type $\tilde{\mathbb{A}}_n$ with straight orientation. \\

We first wish to show that any triangulation containing an arc with both endpoints on the outer boundary circle can not be a fundamental diagram, hence must contain a cycle since no arcs cross. To do this, we need the following lemma.

\begin{lem}\label{lem: only inner outer arcs}
If $\xi$ is an exceptional collection containing a regular module whose fundamental arc diagram has no dateline crossings as in Definition \ref{defn: Universal Dateline}, then $\xi$ contains a simple regular. 
\end{lem}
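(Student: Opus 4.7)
The plan is to use the exceptional convention $\psi$ to translate the hypothesis into a geometric statement about exterior arcs in the fundamental arc diagram, then to locate a simple regular via an innermost-arc argument that leverages the no-dateline-crossings hypothesis. Since $\xi$ contains a regular module, the fundamental arc diagram contains at least one exterior arc or loop: in straight orientation $\tilde{\mathbb{A}}_n$ the inner boundary has only the vertex $0$, so the options are an arc with both endpoints on the outer boundary (coming from a left-regular module) or a loop based at vertex $0$ that wraps around the inner boundary (coming from a right-regular or homogeneous module).

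First I would consider the finite collection of all such exterior arcs and loops appearing in the diagram and select one, $a^\ast$, that is innermost in the nesting order, in the sense that it cuts off a region of the annulus containing the fewest other marked points. Such a choice is well-defined because the arcs form a non-crossing, non-self-intersecting family and the collection is finite, so nesting restricts to a partial order with minimal elements.

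Next I would argue that $a^\ast$ must correspond to a simple regular module. The no-dateline-crossings hypothesis prevents any arc in the diagram from winding around the inner boundary, which rules out arcs from ``outside'' sneaking into the region cut off by $a^\ast$. Combined with the no-crossings condition and the completeness of $\xi$, any marked point lying in the cut-off region would have to be an endpoint of some other arc in the diagram; that arc, by no-crossings and by the dateline hypothesis, would be forced to lie strictly inside the region cut off by $a^\ast$, contradicting the innermost choice. Hence the region cut off by $a^\ast$ contains no marked points, and $a^\ast$ is a minimal-length exterior arc, which under $\psi$ must correspond to a module at the bottom of its regular tube, i.e., a simple regular.

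The main obstacle will be carrying out this innermost argument uniformly across the two cases (outer arc versus inner loop at vertex $0$) and correctly formalizing the ``nesting order'' when loops based at a single vertex are involved. The crux is reconciling the no-dateline-crossings condition with the completeness of $\xi$ to justify the reduction to an empty enclosed region; this will require invoking the precise content of Definition \ref{defn: Universal Dateline} to guarantee that arcs with nontrivial winding, which could otherwise populate the enclosed region, are forbidden.
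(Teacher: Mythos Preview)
Your innermost-arc strategy is correct and genuinely different from the paper's approach. Two remarks.

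First, the loop case never occurs and can be discarded. Fundamental arc diagrams exclude loops by definition, and in straight orientation there is only the single inner marked point $0$, so a right-regular module would yield a loop at $0$. Hence every regular module in $\xi$ is left-regular and its arc has both endpoints on the outer boundary; your worry about ``carrying out this innermost argument uniformly across the two cases'' disappears.

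Second, your crucial claim---that every marked point enclosed by $a^\ast$ is an endpoint of some arc in the diagram---is true, but appealing to ``completeness'' does not by itself explain why, and your proposal does not supply the argument. The clean justification passes through the strand picture: with no dateline crossings, all $n+1$ fundamental strands lie inside the interval $\{0,1,\dots,n+1\}$; since the diagram is cycle-free, having $n+1$ edges on at most $n+2$ vertices forces a spanning tree, so every one of the $n+2$ points is an endpoint of some strand. Translating back to the annulus, any outer vertex enclosed by $a^\ast$ is the endpoint of some arc, and that arc (unable to cross $a^\ast$ or to wind around, by the dateline hypothesis) must sit strictly inside the enclosed region, contradicting the minimality of $a^\ast$. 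With this step filled in, your argument is complete.

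The paper instead works entirely in the strand diagram: it begins from an arbitrary regular strand $c(i,j)$, uses the no-cycle condition to locate a gap vertex in $[i,j]$, extracts a strictly shorter regular strand there, and iterates down to a simple regular. Your direct minimisation avoids the iteration but trades it for the spanning-tree count; once that count is in hand, your route is arguably the more transparent of the two.
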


\begin{proof}
Consider the strand diagram associated to $\xi$ and let $R = ij$ be a regular string module contained in $\xi$. Since there are no dateline crossings, we have $0<i<j<n+1$. If $R$ is simple there is nothing to show, so suppose $j-i\geq 2$. Since $\xi$ is exceptional, there exists some $p\in [i,j]$ such that no strand ends at $p$ for if not, the strand diagram would contain a cycle. Let $qp$ be the regular module in $\xi$ whose support is maximal with $i\leq q < p$, so $0<p-q\leq j-i+1$. Repeating this process will produce a simple regular, a regular strand connecting consecutive vertices, in finite time. 
\end{proof}

\begin{rem}
    Note that the assumption that $\xi$ contains a regular module is needed. For instance in Example \ref{exmp: families of exceptional collections}, the fundamental arc diagram in the second row of the second column of the table does not have a dateline crossing, yet also does not  contain a simple regular.
\end{rem}

\begin{prop}\label{prop: only inner outer arcs}
Any fundamental arc diagram containing an arc with both endpoints on the outer circle of the annulus is not a triangulation.
\end{prop}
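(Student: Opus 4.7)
The plan is to argue by contradiction: assume that a fundamental arc diagram $D$ is simultaneously a triangulation and contains an outer-outer arc, and then exhibit three arcs in $D$ that bound a single triangular face, which by the paper's cycle definition constitutes a 3-cycle, contradicting the no-cycle clause in the definition of a fundamental arc diagram. The key is to locate this triangular face by a minimality argument.

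First I would choose $a = a(i,j)[\lambda]$ to be an outer-outer arc of $D$ whose ``outer region'' $R$ (the component of $A_{Q^{\bm\varepsilon}}\setminus a$ not containing the inner boundary) is minimal among all outer-outer arcs of $D$. Because $R$ is a disk whose boundary consists of $a$ and an outer boundary arc, any triangulation of $R$ by arcs of $D$ must use purely outer-outer arcs inside $R$; minimality therefore forces $R$ to contain no further outer-outer arcs, which in turn forces $R$ to be a triangle. Hence its three vertices are $i$, a single outer vertex $v=i+1$, and $j=i+2$. Next I would examine the unique triangular face $F$ of $D$ on the inner side of $a$, whose third vertex $w$ is either the inner vertex $0$ or another outer vertex.

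Now I analyze three cases. \textbf{Case 1:} $w=0$. Then $F$ has edges $a$ and two bridging arcs from $i$ and $j$ to $0$, all three of which lie in $D$. \textbf{Case 2a:} $w$ is an outer vertex not adjacent to $i$ or $j$ on the outer boundary. Then both edges $(i,w)$ and $(j,w)$ of $F$ are non-boundary outer-outer arcs of $D$. In both cases, $F$'s three edges are arcs of $D$ forming a triangle, and by inspecting the six local clockwise configurations in the excerpt's Figure at each of the three triangle vertices, one sees that these three arcs form a cycle of length $3$ in $D$, a contradiction. \textbf{Case 2b:} $w$ is adjacent to $i$ or $j$, say $w=j+1$, so that edge $(j,w)$ is a boundary arc and $(i,w)$ is a new outer-outer arc $a'\in D$ whose outer region strictly contains $R$. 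I would then iterate the same analysis with $a'$ replacing $a$. Because each iteration annexes at least one additional outer vertex into the current outer region and there are only $|Q_0|-1$ outer vertices available, the iteration must terminate; once every outer vertex has been absorbed, the inner side of the extended outer-outer arc contains only the vertex $0$, so Case 1 applies and yields the desired $3$-cycle.

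The main obstacle I anticipate is controlling arcs with $\lambda\neq 0$: when the chosen $a$ wraps around the inner boundary, the region ``bounded by $a$ and the outer boundary'' is not obviously a disk, and the notion of innermost needs reinterpretation. I would handle this by passing to the universal cover of the annulus, where every lifted outer-outer strand bounds a genuine disk on the upper side, and carrying out the minimality/triangular-face argument there before projecting back; since cycles in $D$ correspond to cycles in any lift, the conclusion is preserved. A smaller obstacle is verifying that a triangular face really does produce a cycle under the excerpt's orientation-sensitive definition, but this is a direct local check using the fact that traversing a triangle in one rotational sense makes each edge locally clockwise from the next at their shared vertex.
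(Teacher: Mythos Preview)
Your approach is correct in outline but genuinely different from the paper's. The paper argues via the outer Dehn-twist action and Lemmas \ref{lem: only inner outer arcs} and \ref{lem: existence of nice diagram}: rotate the diagram so that no arc crosses the universal dateline, invoke Lemma \ref{lem: only inner outer arcs} to produce a \emph{simple} regular, and observe that a simple regular is an arc between adjacent outer vertices and hence contractible to the boundary---so the collection cannot be a triangulation. Your argument instead works inside a single diagram: starting from an innermost outer--outer arc, you walk across successive triangular faces toward the inner boundary until you reach a face whose three sides are all arcs of $D$, and then you note that the three sides of any triangular face form a $3$-cycle in the paper's clockwise-from relation. The paper's route is shorter and reuses machinery developed elsewhere (at the cost of a forward reference to Lemma \ref{lem: existence of nice diagram}); your route is self-contained and more geometric, and it makes explicit the mechanism---an honest cycle of arcs---that obstructs fundamentality.

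Two remarks on details. First, your worry about $\lambda\neq 0$ for the chosen outer--outer arc is unnecessary: the paper already records (just before Theorem \ref{thm: bijection with small diagrams}) that exterior arcs in a fundamental arc diagram must have $\lambda=0$, so every outer--outer arc of $D$ separates the annulus and ``outer region'' is well defined without passing to the cover. Second, in your Case 2b you should note explicitly why the face $F$ cannot have its third vertex $w$ adjacent to \emph{both} endpoints of the current arc: if it were, both remaining sides of $F$ would be outer boundary segments and $F$ would be the entire inner region of $a$, which contains the inner circle and is therefore not a disk---so in that situation the third vertex must in fact be $0$ and you are already in Case 1. With that observation the termination of your iteration is clean: each step strictly enlarges the outer region by at least one marked point, and the process stops either in Case 1, Case 2a, or when only the two endpoints remain on the inner side (forcing Case 1).
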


\begin{proof}
As is the case with triangulations, the action of ${2\pi \over n}$ outer Dehn twists of the annulus is also well defined on fundamental arc diagrams. Since by Lemma \ref{lem: existence of nice diagram} every outer equivalence class of fundamental arc diagrams can be represented by an arc diagram with no dateline crossings, it follows from Lemma \ref{lem: only inner outer arcs} that each of these diagrams will contain a simple module, which is contractible to the boundary. After a sequence of ${2\pi\over n}$ outer clockwise Dehn twists, this arc will still be contractible to the boundary, hence every diagram in any given outer equivalence class that contains regular modules will contain one that is contractible to the boundary. That is, any small fundamental arc diagram that contains a strand with endpoints on the outer circle is not a triangulation.  
\end{proof}

Now to compare the two conventions, given a cluster $\varphi(X_{l+1}) \oplus \dots \oplus \varphi(X_{n+1}) \oplus P_1[1]\ \oplus \dots \oplus P_l[1]$, we analyze the fundamental arc diagram corresponding to the collection $(\varphi(X_{l+1}), \dots,$ $\varphi(X_{n+1}), P_1, \dots, P_l)$. This arc diagram is fundamental by a result in [\ref{ref: Clusters form exceptional collections}] that states that any cluster can be realized as an exceptional collection by simply taking the collection of all unshifted modules. The following theorem tells when the triangulation and the fundamental arc diagrams of the cluster coincide.

\begin{thm} \label{thm: when the conventions coincide}
The cluster and exceptional conventions coincide; that is, the triangulation and fundamental arc diagram of a cluster and its exceptional collection are the same if and only if the triangulation is a fundamental arc diagram.
\end{thm}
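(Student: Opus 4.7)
The forward direction is immediate. If the triangulation $T$ of the cluster equals the fundamental arc diagram of the corresponding exceptional collection, then $T$ inherits the fundamental arc diagram property by construction, since the exceptional convention $\psi$ always produces a fundamental arc diagram.

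For the reverse direction, suppose $T$ is a triangulation that is also a fundamental arc diagram. The first step is to apply Proposition \ref{prop: only inner outer arcs}: since $T$ is a fundamental arc diagram, $T$ contains no arcs with both endpoints on the outer boundary circle. Combined with the no-loops clause in the definition of a fundamental arc diagram, this forces every arc in $T$ to be a bridging arc between the outer and inner boundary components.

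With $T$ consisting exclusively of bridging arcs, the exceptional collection of the cluster contains only modules whose image under $\psi$ is a bridging arc---namely the preprojective and preinjective modules. Regular modules, by contrast, map under $\psi$ to exterior arcs (outer-outer arcs or inner loops for straight orientation), and these are absent from $T$. The key step is then to compare, for each arc of $T$, the cluster-convention arc (read off from the lift as a pattern of crossings with the reference vertical strands $\{X_1, \ldots, X_{n+1}\}$) against the exceptional-convention arc (read off from the $(i',j';l')$ decomposition via the formula $\psi((i',j';l')) = a(i' \bmod (n+1), j' \bmod (n+1))[\pm l']$). For a vertical strand $X_i$ the corresponding module is the projective $P_i$, and a direct computation identifying $P_i$ in the $(i',j';l')$ notation---as $(i-1, n+1; 0)$ for $i = 2, \ldots, n+1$ and as $(n, n+1; 1)$ for $i = 1$---yields $\psi(P_i) = X_i$ on the annulus. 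For a non-vertical bridging arc $Y$ in $T$, a parallel analysis reads off the $(i',j';l')$ decomposition of $\varphi(Y)$ directly from the cyclic crossing pattern of the lift of $Y$ with the reference strands, and the formula for $\psi$ then returns $Y$ itself.

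The main obstacle is making the bookkeeping in the non-vertical case fully rigorous. The cluster convention encodes winding implicitly: each additional clockwise wrap of the lift of a bridging arc contributes $n+1$ extra crossings with the reference strands, which on the module side corresponds precisely to incrementing $l'$ by one in the walk $e_{i'+1}(\alpha_{i'+1}\cdots\alpha_{i'})^{l'}\alpha_{i'+1}\cdots\alpha_{j'-1}e_{j'}$, while the sign of the winding is tracked by whether the module is preprojective or preinjective, matching the $\pm l'$ convention in $\psi$. The endpoints $i' \bmod (n+1)$ and $j' \bmod (n+1)$ coincide with the inner and outer endpoints of the original bridging arc, and putting this bookkeeping together shows $\psi \circ \varphi = \mathrm{id}$ on the bridging arcs forming $T$, so $T$ equals the fundamental arc diagram of its exceptional collection.
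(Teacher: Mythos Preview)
Your proof is correct and follows essentially the same route as the paper's: both argue the forward direction by contraposition, then for the reverse direction invoke Proposition~\ref{prop: only inner outer arcs} to force every arc of $T$ to be bridging, and finish by checking arc-by-arc that $\psi\circ\varphi$ is the identity on bridging arcs, splitting into the vertical-strand case (yielding the projectives $P_i$) and the non-vertical bridging case. Your explicit invocation of the no-loops clause to rule out inner--inner arcs, and your identification of the $P_i$ in the $(i',j';l')$ notation, are slightly more detailed than the paper's treatment but do not change the argument.
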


\begin{proof}
Suppose first that the triangulation is not fundamental. Then certainly the arc diagram can't coincide, for if it did, it would contradict the result that any cluster can be arranged into an exceptional sequence, and hence a collection. \\

Suppose now that the triangulation is a fundamental arc diagram. Then by Proposition \ref{prop: only inner outer arcs}, the triangulation contains only arcs that connect the inner and outer circles of the annulus. Suppose we have the arc $a(i,0)[0]$. Then this arc lifts to a vertical strand $X_i$ in the universal cover. We have that $X_i$ becomes $P_i[1]$ in the cluster, $P_i$ in the exceptional collection and $a(i,0)[0]$ in the fundamental arc diagram. Suppose now that we have an arc $a(i,0)[j]$. Then the fundamental lift will be a strand connecting $i$ on $\mathbb{R} \times \{1\}$ to $j+1$ on $\mathbb{R} \times \{0\}$. This strand will cross each of $\{X_{i+1},\dots,X_{n+1}\}$ precisely $j+1$ times and will cross each of $\{X_{1},\dots,X_{i}\}$ precisely $j$ times. The corresponding module in the cluster is the string module $i n+1_{n-i+1+j(n+1)}$. In the exceptional convention, this module is the arc $a(i,0)[j]$. Thus arcs of the form $a(i,0)[j]$ in the triangulation stay the same in the fundamental arc diagram. The proof that arcs of the form $a(0,i)[j]$ also coincide in both conventions is analogous.
\end{proof}

\noindent 
For the remainder of the paper, our focus will be on exceptional collections and the exceptional convention.

\section{Relative Projectivity in Exceptional Sets}

\indent 

In this section, we will prove that the notion of a module being relatively projective in an exceptional set is well defined; that is, the property of being relatively projective in an exceptional sequence is independent of the order. Throughout this section, let $Q$ be a quiver with $n$ vertices whose path algebra $\Bbbk Q = \Lambda$ is hereditary. In order to prove this well-definedness, we require some definitions and two lemmas.

A representation of a quiver $Q$ is \textbf{projective/injective} if its corresponding $\Bbbk Q$ module under the equivalence of categories between representations of $Q$ and modules over $\Bbbk Q$ is projective/injective respectively. Note that a representation $V$ is projective if and only if $\tau V = 0$; that is, its image under the Auslander--Reiten translate vanishes. Let $V$ be a representation of $Q$. The \textbf{right perpendicular category} of $V$, denoted by $V^{\perp}$, is the full subcategory of rep$_{\Bbbk}(Q)$ of representations $W$ such that Hom$(V,W) = 0 =$ Ext$(V,W)$. The \textbf{left perpendicular category} of $V$, denoted $^{\perp\!}V$, is defined analogously. A module $M$ is \textbf{relatively projective with respect to $V$} if $M$ is projective in $V^{\perp}$. Dually, $M$ is \textbf{relatively injective with respect to $V$} if $M$ is injective in $^{\perp\!}V$. Given an exceptional sequence of representations of $Q$, say $\xi = (E_1,E_2,\dots,E_n)$, we say that $E_i$ is \textbf{relatively projective in $\xi$} if $E_i$ is relatively projective with respect to $E_{i+1} \oplus \cdots \oplus E_n$. Dually, we say that $E_i$ is \textbf{relatively injective in $\xi$} if $E_i$ is relatively injective with respect to $E_{1} \oplus \cdots \oplus E_{i-1}$.

\begin{lem} \label{lem: Can't switch X and Y}
Let $(E_1, E_2, \dots, E_{n-2},Y,X)$ be an exceptional sequence of $\Bbbk Q$-modules where $Y$ is relatively projective with respect to $X$ but not projective. Then $(X,Y)$ is not an exceptional pair.
\end{lem}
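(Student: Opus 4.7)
The plan is to argue by contradiction: assume $(X, Y)$ is also an exceptional pair, so that Hom$(Y, X) = 0 = $ Ext$(Y, X)$ alongside the Hom$(X, Y) = 0 =$ Ext$(X, Y)$ provided by the given exceptional sequence. Then $X$ and $Y$ are mutually Hom- and Ext-orthogonal, and I will derive a contradiction purely from Auslander--Reiten duality on the hereditary algebra $\Bbbk Q$.

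First I would observe that since $Y$ is not projective, the almost split sequence $0 \to \tau Y \to E \to Y \to 0$ is non-split with $\tau Y \neq 0$, witnessing Ext$(Y, \tau Y) \neq 0$. But $Y$ being relatively projective with respect to $X$ means Ext$(Y, Z) = 0$ for every $Z \in X^{\perp}$, so $\tau Y \notin X^{\perp}$. Consequently at least one of Hom$(X, \tau Y)$ or Ext$(X, \tau Y)$ is nonzero. The AR duality isomorphism Ext$(Y, X) \cong D\,$Hom$(X, \tau Y)$ combined with the contradiction hypothesis Ext$(Y, X) = 0$ forces Hom$(X, \tau Y) = 0$, so Ext$(X, \tau Y) \neq 0$.

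This in particular rules out $X$ being projective, so $\tau X$ is a nonzero module. A second application of AR duality gives Ext$(X, \tau Y) \cong D\,$Hom$(\tau Y, \tau X)$, and hence Hom$(\tau Y, \tau X) \neq 0$. Since both $X$ and $Y$ are non-projective over the hereditary algebra $\Bbbk Q$, $\tau$ acts as an autoequivalence of $D^b(\Bbbk Q)$ (as recalled in the preliminaries) and induces a natural isomorphism Hom$(\tau Y, \tau X) \cong $ Hom$(Y, X)$. Therefore Hom$(Y, X) \neq 0$, contradicting the assumption that $(X, Y)$ is an exceptional pair.

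The main obstacle, were one to try a more direct route, is propagating the vanishing Ext$(Y, -) = 0$ from $X^{\perp}$ to all of mod-$\Bbbk Q$ (which would need Schofield-type perpendicular-category reflection machinery). The AR-duality route above sidesteps this entirely by focusing on the single module $\tau Y$ and leveraging the full mutual orthogonality of $X$ and $Y$; the only subtle point is checking that both $X$ and $Y$ are automatically non-projective in the relevant step, which falls out of Ext$(X, \tau Y) \neq 0$.
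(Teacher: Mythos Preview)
Your proof is correct and follows essentially the same route as the paper's: both arguments hinge on showing $\tau Y \notin X^{\perp}$ (since $Y$ is projective there but $\mathrm{Ext}(Y,\tau Y)\neq 0$) and then applying Auslander--Reiten duality to convert the resulting nonvanishing of $\mathrm{Hom}(X,\tau Y)$ or $\mathrm{Ext}(X,\tau Y)$ into $\mathrm{Ext}(Y,X)\neq 0$ or $\mathrm{Hom}(Y,X)\neq 0$. The only cosmetic differences are that you frame it as a contradiction and, for the second case, pass through $\mathrm{Hom}(\tau Y,\tau X)\cong \mathrm{Hom}(Y,X)$ via the equivalence property of $\tau$ on non-projectives (carefully noting $X$ is forced to be non-projective), whereas the paper invokes AR duality directly in the form $D\,\mathrm{Ext}(X,\tau Y)\cong \overline{\mathrm{Hom}}(Y,X)$; both are valid.
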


\begin{proof}
Since $Y$ is not projective, its image under the Auslander--Reiten translate does not vanish. Moreover, since $Y$ is projective in $X^{\perp}$, $\tau Y \not\in X^{\perp}$. Thus either Hom$(X,\tau Y) \neq 0$ or Ext$(X,\tau Y)\neq0$, which, by Auslander--Reiten duality, implies  Ext$(Y,X) \neq 0$ or Hom$(Y,X)\neq 0$ respectively.
\end{proof}

\begin{lem} \label{lem: if both exceptional, then can switch}
Suppose two consecutive terms in an exceptional sequence commute; that is, suppose both $(E_1, E_2, \dots , E_j, X, Y, E_{j+3}, \dots E_n)$ and $(E_1, E_2, \dots , E_j, Y, X, E_{j+3}, \dots E_n)$ are exceptional sequences. Then $Y$ is relatively projective in the first if and only if it is relatively projective in the second.
\end{lem}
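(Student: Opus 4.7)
The plan is to translate both relative-projectivity conditions into conditions internal to the perpendicular category $W^{\perp}$, where $W := E_{j+3}\oplus\cdots\oplus E_n$ (and $W=0$ if $j=n-2$), and then reduce the equivalence to Lemma \ref{lem: Can't switch X and Y} applied inside $W^{\perp}$. Unwinding the definitions, $Y$ is relatively projective in the first sequence exactly when $Y$ is projective in $W^{\perp}$, while $Y$ is relatively projective in the second sequence exactly when $Y$ is projective in $(X\oplus W)^{\perp}=X^{\perp}\cap W^{\perp}$. Because both orderings form exceptional sequences, we have $X,Y\in W^{\perp}$, $Y\in X^{\perp}$, and $X\in Y^{\perp}$, so both $(X,Y)$ and $(Y,X)$ are exceptional pairs inside $W^{\perp}$.

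The forward implication is immediate: if $Y$ is projective in the larger category $W^{\perp}$, then $\text{Ext}^1(Y,Z)=0$ for every $Z\in W^{\perp}$, and hence in particular for every $Z$ in the full subcategory $X^{\perp}\cap W^{\perp}$, which gives projectivity there.

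For the reverse implication I would invoke the perpendicular calculus of Geigle--Lenzing and Schofield: since $(E_{j+3},\ldots,E_n)$ is an exceptional sequence in the hereditary category $\text{mod-}\Bbbk Q$, the perpendicular category $W^{\perp}$ is equivalent to $\text{mod-}\Lambda'$ for some basic finite-dimensional hereditary algebra $\Lambda'$. Under this equivalence, $(E_1,\ldots,E_j,Y,X)$ corresponds to a complete exceptional sequence of length $j+2$ in $\text{mod-}\Lambda'$, and $(X,Y)$ remains an exceptional pair there. I would then apply the contrapositive of Lemma \ref{lem: Can't switch X and Y} inside $\text{mod-}\Lambda'$: under the assumption that $Y$ is projective in $X^{\perp}\cap W^{\perp}$, which is precisely the statement that $Y$ is relatively projective with respect to $X$ inside $\text{mod-}\Lambda'$, and using that the pair $(X,Y)$ is exceptional in $\text{mod-}\Lambda'$, the lemma forces $Y$ to be projective in $\text{mod-}\Lambda'$, i.e., projective in $W^{\perp}$.

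The main obstacle is confirming that Lemma \ref{lem: Can't switch X and Y} transfers verbatim to the module category of the abstract hereditary algebra $\Lambda'$. Its proof, however, uses only the Auslander--Reiten translate, the fact that $\tau Y\neq 0$ for nonprojective $Y$, and Auslander--Reiten duality, all of which are standard features of the module category of any finite-dimensional hereditary algebra, so the argument transfers without change.
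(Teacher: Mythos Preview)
Your proof is correct and follows essentially the same route as the paper: both directions are reduced to comparing projectivity in $W^{\perp}$ versus in $X^{\perp}\cap W^{\perp}$, the forward implication is trivial, and the reverse implication is obtained by applying Lemma~\ref{lem: Can't switch X and Y} inside the perpendicular category. The paper's proof is terser and simply invokes Lemma~\ref{lem: Can't switch X and Y} to conclude that either $\text{Hom}(X,\tau Y)\neq 0$ or $\text{Ext}(X,\tau Y)\neq 0$, leaving implicit the passage to $W^{\perp}\simeq\text{mod-}\Lambda'$; you make this step explicit via Geigle--Lenzing/Schofield and verify that the proof of Lemma~\ref{lem: Can't switch X and Y} uses only features available over any finite-dimensional hereditary algebra, which is a welcome clarification rather than a different argument.
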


\begin{proof}
Let $E = E_{j+3} \oplus \dots \oplus E_n$. Since $Y$ is relatively projective in the first sequence, $Y$ is projective in $E^{\perp}$ by definition. Therefore $Y$ is projective in the smaller category $X^{\perp} \cap E^{\perp}$, implying that $Y$ is relatively projective in the second sequence. Conversely, suppose $Y$ is not projective in $E^{\perp}$ but is in $X^{\perp} \cap E^{\perp}$. Then by Lemma \ref{lem: Can't switch X and Y}, either Hom$(X,\tau Y) \neq 0$ or Ext$(X,\tau Y)\neq0$, contradicting the exceptionality of the sequence.
\end{proof}

We are now ready to prove the main result of this section.

\begin{thm}\label{thm: rel projective is independent of order}
The property of being relatively projective in an exceptional sequence is independent of the order, and thus the relatively projective objects of an exceptional set form a well-defined subset.
\end{thm}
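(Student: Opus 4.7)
The plan is to reduce the statement to Lemma \ref{lem: if both exceptional, then can switch} by means of a combinatorial observation about topological sorts. To this end, associate to the exceptional set $\bar\xi = \{E_1, \dots, E_n\}$ the directed graph $G_{\bar\xi}$ with vertex set $\bar\xi$ and a directed edge $V \to W$ whenever $\text{Hom}(V, W) \neq 0$ or $\text{Ext}(V, W) \neq 0$. Unwinding the definition of an exceptional sequence, an ordering $(W_1, \dots, W_n)$ of $\bar\xi$ is an exceptional sequence if and only if it is a topological sort of $G_{\bar\xi}$. Since $\bar\xi$ is an exceptional set, at least one such ordering exists, so $G_{\bar\xi}$ is acyclic.

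The next step is to invoke the standard combinatorial fact that any two topological sorts of a finite acyclic digraph are connected by a finite sequence of adjacent transpositions, each of which swaps a pair of vertices with no edge between them in either direction. (A short induction on the number of inversions between two sorts yields this; at each stage one locates an adjacent pair in the current sort that is inverted in the target sort, and acyclicity of $G_{\bar\xi}$ together with both sorts being topological forces the pair to have no edge in either direction.) In our setting, a pair $V, W \in \bar\xi$ has no edge in either direction precisely when $\text{Hom}(V, W) = \text{Hom}(W, V) = \text{Ext}(V, W) = \text{Ext}(W, V) = 0$, which is exactly the commuting hypothesis of Lemma \ref{lem: if both exceptional, then can switch} that both $(V, W)$ and $(W, V)$ are exceptional pairs.

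It then remains to check that each elementary swap preserves the set of relatively projective objects. Lemma \ref{lem: if both exceptional, then can switch} directly shows that the swapped element $Y$ retains its relative projectivity; applying the same lemma with the roles of $X$ and $Y$ interchanged (i.e.\ reading the swap in the opposite direction) handles $X$. For any element $E_i$ in a position other than $j+1$ or $j+2$, the collection of modules appearing after $E_i$ is unchanged as an unordered set, so the right perpendicular $(E_{i+1} \oplus \dots \oplus E_n)^\perp$, and therefore the relative projectivity of $E_i$, are unchanged. Iterating along the sequence of adjacent commuting swaps connecting any two orderings of $\bar\xi$ yields the theorem, and the well-definedness of the relatively projective subset is an immediate consequence. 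The real content of the argument is carried by Lemma \ref{lem: if both exceptional, then can switch}; the only conceptual obstacle is verifying that two orderings of a common exceptional set are linked via commuting adjacent swaps, and the topological-sort framework makes this a short bookkeeping step.
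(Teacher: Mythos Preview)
Your proof is correct and rests on the same key ingredient as the paper's, namely Lemma \ref{lem: if both exceptional, then can switch} applied along a chain of adjacent commuting transpositions. The organization differs slightly: rather than connecting two arbitrary orderings directly, the paper fixes an element $Y$, lets $E$ be the direct sum of all elements strictly above $Y$ in the partial order underlying your graph $G_{\bar\xi}$, and shows that in \emph{any} ordering $Y$ is relatively projective if and only if $Y$ is projective in $E^\perp$. This is done by sliding the ``extra'' elements (those following $Y$ in the given ordering but not dominating it) leftward past $Y$ one at a time via the same lemma. The paper's route thus yields an intrinsic, order-free characterization of relative projectivity as a byproduct, while your topological-sort argument is more symmetric and handles all elements at once; both are short once the lemma is available.
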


\begin{proof}
Suppose $\{E_i\} = \{E_1, E_2, \dots, E_n\}$ is an exceptional collection. We define a partial order $\prec$ on $\{E_i\}$ given by the transitive closure of the relation $E_i \prec E_j$ if either Hom$(E_i,E_j) \neq 0$ and/or Ext$(E_i,E_j) \neq 0$. If $Y$ is another module in the collection, let $E_1, E_2, \dots, E_k$ be the elements in the collection that are strictly greater than $Y$. By definition, these elements must follow $Y$ in any exceptional sequence given by rearranging the $E_i$. Set $E = E_1 \oplus E_2 \oplus \dots \oplus E_k$. Note that Hom$(E,E_j)=0=$ Ext$(E,E_j)$ for any other $E_j$ in the collection since, otherwise, $Y\prec E_j$. For any ordering of the $E_i$ that yields an exceptional sequence, we will show that $Y$ is relatively projective in that sequence if and only if it is projective in $E^{\perp}$. \\

Indeed, the summands of $E$ must appear after $Y$ in any exceptional sequence. Let $X_1, X_2, \dots, X_m$ be the other objects in the exceptional sequence that follow $Y$. Then the first one, $X_1$, must commute with $Y$ and all the $E_i$ between $Y$ and $X_1$ as Hom$(Y\oplus E,X_1) = 0$ and Ext$(Y\oplus E,X_1) = 0$. Therefore we can move $X_1$ to the left of $Y$ in the sequence. By Lemma \ref{lem: if both exceptional, then can switch}, $Y$ is relatively projective in the new sequence if and only if it was relatively projective in the original sequence. It follows by induction on $m$ that we can move all $X_i$ to the left of $Y$ without changing its relative projectivity. After doing so, only the summands of $E$ follow $Y$. Therefore $Y$ was relatively projective in the original sequence if and only if $Y$ is projective in $E^{\perp}$. Since this condition is independent of the order of the exceptional sequence, the theorem follows.
\end{proof}

Recall that there is a duality $D=\text{Hom}_{\Bbbk}(-,\Bbbk)$ between the categories of representations of a quiver $Q$ and representations of the opposite quiver $Q^{op}$. By the dual object, we mean the image of the object under this duality. There is a symmetry between relatively projective and relatively injective objects in exceptional sequences.

\begin{lem}\label{lem: duality of exceptional sequences}
$(E_1,\dots,E_n)$ is an exceptional sequence for a hereditary algebra $\Lambda$ if and only if the dual objects $(DE_n,\dots,DE_1)$ form an exceptional sequence for $\Lambda^{op}$. Furthermore, $E_i$ is relatively projective or injective if and only if $DE_i$ is relatively injective or projective, respectively.
\end{lem}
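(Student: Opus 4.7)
The plan is to leverage standard properties of the $\Bbbk$-duality $D = \operatorname{Hom}_{\Bbbk}(-,\Bbbk)$, which is an exact contravariant equivalence between $\operatorname{mod-}\Lambda$ and $\operatorname{mod-}\Lambda^{\op}$. First I would record the natural isomorphisms
\[
\operatorname{Hom}_{\Lambda}(M,N) \cong \operatorname{Hom}_{\Lambda^{\op}}(DN, DM), \qquad
\operatorname{Ext}^{1}_{\Lambda}(M,N) \cong \operatorname{Ext}^{1}_{\Lambda^{\op}}(DN, DM),
\]
along with the fact that $\operatorname{End}_{\Lambda}(E)^{\op} \cong \operatorname{End}_{\Lambda^{\op}}(DE)$, so the latter is a division ring precisely when the former is. These together immediately yield that $E$ is exceptional over $\Lambda$ iff $DE$ is exceptional over $\Lambda^{\op}$.

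Next I would check the sequence condition by reindexing. Setting $F_{k}:=DE_{n+1-k}$, the vanishing $\operatorname{Hom}(F_i,F_j)=0=\operatorname{Ext}^{1}(F_i,F_j)$ for $j<i$ translates, via the displayed isomorphisms with $a:=n+1-j$ and $b:=n+1-i$, into $\operatorname{Hom}(E_a,E_b)=0=\operatorname{Ext}^{1}(E_a,E_b)$ for $b<a$, which is exactly the hypothesis that $(E_1,\dots,E_n)$ is exceptional. Thus $(E_1,\dots,E_n)$ is exceptional for $\Lambda$ iff $(DE_n,\dots,DE_1)$ is exceptional for $\Lambda^{\op}$.

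For the second statement, the key observation is that $D$ identifies the right perpendicular category of $V$ with the left perpendicular category of $DV$: $W\in V^{\perp}$ iff $\operatorname{Hom}(V,W)=0=\operatorname{Ext}^{1}(V,W)$ iff, after applying $D$, $\operatorname{Hom}(DW,DV)=0=\operatorname{Ext}^{1}(DW,DV)$, i.e.\ $DW\in{}^{\perp\!}(DV)$. Since $D$ exchanges projectives and injectives in any such functorially-defined abelian subcategory, it carries projective objects of $V^{\perp}$ to injective objects of ${}^{\perp\!}(DV)$, and vice versa. Taking $V=E_{i+1}\oplus\cdots\oplus E_n$, the summands of $DV$ are precisely the terms of $(DE_n,\dots,DE_1)$ that precede $DE_i$, so ``relatively projective in $(E_1,\dots,E_n)$'' is exchanged with ``relatively injective in $(DE_n,\dots,DE_1)$'' (and symmetrically for the other direction).

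The whole argument is really bookkeeping once the four duality isomorphisms are in hand; the only subtle point is carefully matching indices after reversing the sequence and verifying that the perpendicular categories transform correctly under $D$, which I expect to be the most error-prone step. Well-definedness on exceptional \emph{sets} then follows from Theorem~\ref{thm: rel projective is independent of order}.
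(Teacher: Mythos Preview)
Your proposal is correct and follows essentially the same approach as the paper: the paper's proof simply records the two natural isomorphisms $\operatorname{Hom}_\Lambda(X,Y)\cong\operatorname{Hom}_{\Lambda^{\op}}(DY,DX)$ and $\operatorname{Ext}_\Lambda(X,Y)\cong\operatorname{Ext}_{\Lambda^{\op}}(DY,DX)$ and declares the lemma to follow. You have carried out exactly the bookkeeping the paper omits, including the explicit identification $D(V^\perp)={}^{\perp\!}(DV)$ needed for the relative projectivity/injectivity claim, so your version is a faithful expansion of the paper's one-line argument.
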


\begin{proof}
This follows from the fact that ${\rm Hom}_\Lambda(X,Y)\cong {\rm Hom}_{\Lambda^{op}}(DY,DX)$ and, similarly, ${\rm Ext}_\Lambda(X,Y)\cong {\rm Ext}_{\Lambda^{op}}(DY,DX)$.
\end{proof}

This symmetry together with Theorem \ref{thm: rel projective is independent of order} imply the following.

\begin{cor}\label{cor: rel injectives are independent of order}
The property of being relatively injective in an exceptional sequence is independent of the order, and thus the relatively injective objects of an exceptional set form a well-defined subset.\qed
\end{cor}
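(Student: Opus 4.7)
The plan is to deduce the corollary directly from Theorem \ref{thm: rel projective is independent of order} combined with the duality recorded in Lemma \ref{lem: duality of exceptional sequences}. The key observation is that $\Lambda^{op}=\Bbbk Q^{op}$ is again hereditary, so Theorem \ref{thm: rel projective is independent of order} applies verbatim to exceptional sequences of $\Lambda^{op}$-modules.

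Concretely, fix an exceptional set $\bar\xi=\{E_1,\dots,E_n\}$ of $\Lambda$-modules and fix a module $Y\in\bar\xi$. I want to show that whether $Y$ is relatively injective depends only on $\bar\xi$ and $Y$, not on the chosen ordering into an exceptional sequence. Given any two orderings $\xi$ and $\xi'$ of $\bar\xi$ that are exceptional sequences, apply the duality $D=\mathrm{Hom}_{\Bbbk}(-,\Bbbk)$. By Lemma \ref{lem: duality of exceptional sequences}, reversing the order and dualizing produces two exceptional sequences $D\xi$ and $D\xi'$ of $\Lambda^{op}$-modules; moreover, these two sequences are reorderings of the \emph{same} exceptional set $\{DE_1,\dots,DE_n\}$ for $\Lambda^{op}$.

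By the second half of Lemma \ref{lem: duality of exceptional sequences}, $Y$ is relatively injective in $\xi$ if and only if $DY$ is relatively projective in $D\xi$, and similarly for $\xi'$ and $D\xi'$. Theorem \ref{thm: rel projective is independent of order}, applied to the hereditary algebra $\Lambda^{op}$ and the exceptional set $\{DE_1,\dots,DE_n\}$, tells us that relative projectivity of $DY$ is independent of which exceptional ordering we choose; in particular it holds in $D\xi$ if and only if it holds in $D\xi'$. Translating back through the duality, $Y$ is relatively injective in $\xi$ if and only if it is relatively injective in $\xi'$, proving that relative injectivity is a property of $(\bar\xi,Y)$ alone.

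Since no new constructions are required beyond invoking the two tools already established, there is no real obstacle; the only thing to be careful about is bookkeeping the order reversal (the dual of the last module in $\xi$ becomes the first module in $D\xi$, which is exactly why "relatively injective" on one side corresponds to "relatively projective" on the other).
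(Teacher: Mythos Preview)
Your proposal is correct and follows exactly the approach indicated by the paper, which simply states that the corollary follows from the duality in Lemma \ref{lem: duality of exceptional sequences} together with Theorem \ref{thm: rel projective is independent of order}. You have spelled out the details of this implication carefully and accurately; nothing more is needed.
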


\section{Counting Exceptional Sets} \label{sec: counting exceptional sets}

\indent

In this section we will count the number of exceptional sets in the Dynkin case and families of exceptional sets in the affine case for quivers of type $\mathbb{A}$ and $\tilde{\mathbb{A}}$ with straight orientation respectively. We begin with the Dynkin case.
\subsection{$\mathbb{A}_n$ Straight Orientation}

\indent 

Throughout this subsection, let $Q$ represent a quiver of type $\mathbb{A}_n$ with straight orientation. Let $\Bbbk Q$ denote its path algebra over $\Bbbk$. We use the notation $M_{i,j}$ for the $\Bbbk Q$-module with support $(i,j]$ corresponding to the strand $c(i,j)$. Thus, $M_{0,k}$ are the injective objects, $M_{k,n}$ are the projective objects and the length of $M_{i,j}$ is $j-i$. We will show that the number of exceptional sets over $\Bbbk Q$ is counted by the generalized Catalan numbers, also known as the Rothe numbers, the Rothe--Hagan coefficients of the first type, and the $k$-Catalan numbers. For more on these numbers see [\ref{ref: Gould Generalized Catalan Numbers}] and [\ref{ref: Master's Thesis Generalized Catalan Numbers}], or Section \ref{sec: affine A exceptional collections} below. Throughout, let $E(n)$ denote the number of exceptional sets over $\Bbbk Q$. We begin with a pair of lemmas, followed by the Rothe recursion result. Note that the following lemma has also been proven in [\ref{ref: Igusa and Sen Rooted Labeled Forests}].

\begin{lem} \label{lem: existence of gap and maximal injective}
In any exceptional collection over $\Bbbk Q$ there exists at least one injective object and moreover, in the support of the injective object with maximal support, there exists a unique vertex at which no other object in the exceptional set has support.
\end{lem}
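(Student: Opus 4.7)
The plan is to exploit the correspondence between exceptional collections over $\Bbbk Q$ and fundamental strand diagrams from the previous subsection: since the collection has $n$ modules, the diagram consists of $n$ non-crossing strands on the $n+1$ marked points $\{0, 1, \ldots, n\}$. Combined with the vertex-edge count and the absence of cycles in the definition of a fundamental strand diagram, this forces the diagram to be a tree on this vertex set. Under the strand-module correspondence in the $\mathbb{A}_n$-straight case, $M_{x, y} \leftrightarrow c(x, y)$, and the injective modules $M_{0, k}$ correspond precisely to the strands incident to vertex $0$.

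For the existence of an injective object, since the tree is connected vertex $0$ is incident to at least one strand $c(0, k)$, which represents the injective $M_{0, k}$. A self-contained alternative, avoiding the tree structure, is available: if no injective appeared in the collection, every module would have support in $\{2, \ldots, n\}$ and would therefore give an exceptional module over the sub-quiver $\mathbb{A}_{n-1}$ on $\{2, \ldots, n\}$; the resulting length-$n$ exceptional sequence in $\mathbb{A}_{n-1}$ would contradict the length bound $k \leq n - 1$.

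For the unique-vertex claim, let $M_{0, m}$ be the injective with maximal support, so $c(0, m)$ is the strand at $0$ with the largest right endpoint. Non-crossing together with maximality of $m$ forces every other strand to have both endpoints in $\{0, 1, \ldots, m\}$ (nested inside $c(0, m)$) or both in $\{m, m+1, \ldots, n\}$: a strand with one endpoint $\leq m-1$ and the other $\geq m+1$ would cross $c(0, m)$, while a strand $c(0, y)$ with $y > m$ would violate maximality. An easy vertex-edge count then shows that the nested strands, together with $c(0, m)$, form a spanning subtree of $\{0, 1, \ldots, m\}$ with exactly $m$ edges. Removing $c(0, m)$ from this subtree splits it into a forest with two components, $\tilde{C}_0 \ni 0$ and $\tilde{C}_m \ni m$, and the crucial structural claim is that these components are intervals: $\tilde{C}_0 = \{0, 1, \ldots, k\}$ and $\tilde{C}_m = \{k+1, \ldots, m\}$ for some $0 \leq k \leq m-1$.

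Granting the interval claim, the candidate vertex is $v = k + 1$. Any strand $c(x, y) \neq c(0, m)$ containing $v$ in its support would need $x \leq k$ and $y \geq k + 1$ and would therefore reconnect $\tilde{C}_0$ to $\tilde{C}_m$, which is impossible. For uniqueness, each of $\tilde{C}_0$ and $\tilde{C}_m$ is a non-crossing tree on a block of consecutive vertices with one fewer edge, and an elementary argument covers every remaining vertex: if some interior vertex $v$ of such a sub-tree were in the support of no edge, the edges would partition into two families lying entirely on the two sides of $v$, contradicting the connectedness of the sub-tree. The main technical obstacle is the interval claim, which I plan to prove by contradiction and path-tracing --- if $\tilde{C}_0$ omitted some $v^* \in \tilde{C}_m$ with $v^* < k := \max(\tilde{C}_0)$, any path in $\tilde{C}_0$ from $0$ to $k$ must contain a strand $c(p, q)$ with $p < v^* < q \leq k$, and non-crossing then traps any $\tilde{C}_m$-path starting at $v^*$ inside the interval $(p, q)$, contradicting $m \in \tilde{C}_m$ and $m > q$.
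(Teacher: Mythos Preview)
Your argument is correct. Both the paper and you ultimately rely on the strand picture, but the strategies diverge after that.

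The paper argues by completeness throughout: if no injective were present, one could append $I_1=M_{0,1}$; if there were no gap under $M_{0,k}$, the strands under $c(0,k)$ would form a (clockwise) cycle; if there were two gaps $i,j$, one could append the simple $M_{i-1,i}$. Each time the contradiction is with the bound $k\le n$ on the length of an exceptional sequence.

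You instead use the fact that a fundamental strand diagram on $\{0,\dots,n\}$ with $n$ strands, no crossings and no cycles is a spanning tree. From this you extract everything structurally: vertex $0$ has an incident edge (existence of an injective); the strands nested inside $c(0,m)$ together with $c(0,m)$ form a spanning subtree of $\{0,\dots,m\}$; deleting $c(0,m)$ yields two components, and your ``interval claim'' (proved cleanly by the trapping argument) forces them to be $[0,k]$ and $[k+1,m]$, giving the unique gap $k+1$. Uniqueness then follows from connectedness of each interval subtree rather than from a maximality argument.

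Your route is longer but more constructive: it actually locates the gap as the boundary between the two components and yields the interval decomposition that the paper only obtains implicitly later (in the proof of Lemma \ref{lem: recursion for g(z)}). The paper's route is shorter and leans on the length bound for exceptional sequences. One small comment: in your uniqueness paragraph, ``interior vertex'' should be read as every vertex of the subtree other than its left endpoint (so $v\in\{1,\dots,k\}$ for $\tilde C_0$, including $v=k$); you might want to make that explicit, since the case $v=k$ uses that $k$ has an incident edge in $\tilde C_0$ rather than the two-sided partition argument.
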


\begin{proof}
The injective objects are those having $1$ in their support. Thus, if an exceptional set has no injectives, then vertex $1$ would not be in the support of the set and we could add $I_1 = M_{0,1}$ to the set. So let $k$ be maximal so that $M_{0,k}$ is in the set. The corresponding strand connects $(x_0,0)$ to $(x_k,0)$. There must be at least one vertex at which no module in the set has support, for if not, there would be a strand connecting each point between $(x_0,0)$ and $(x_k,0)$, hence creating a cycle. On the other hand, if there were two vertices say $i,j \in [1,k]$ that are not in the support of any module in the set, we could add the simple module $M_{i-1,i}$ or the module $M_{j-1,j}$ to the set without changing exceptionality.
\end{proof}

\begin{lem}\label{lem: recursion for g(z)} {\color{white} .} 
$E(0) = E(1) = 1$ and $E(n) = \displaystyle\sum_{\substack{a+b+c = n-1\\a,b,c\,\geq 0}} E(a)E(b)E(c)$. Equivalently, the generating function $g(z) = \displaystyle\sum_{n\geq0} E(n)z^n$ satisfies $g=zg^3+1$.
\end{lem}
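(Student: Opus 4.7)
The plan is to establish a bijective decomposition of exceptional sets in $\mathbb{A}_n$ that directly realizes the trinomial convolution. The base cases are immediate: for $n=0$ the empty collection is the unique exceptional set, so $E(0)=1$; for $n=1$ the only module is $S_1$ and the only exceptional set is $\{S_1\}$, so $E(1)=1$.

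For $n\ge 1$, let $\xi$ be an exceptional set of $\Bbbk Q$. By Lemma~\ref{lem: existence of gap and maximal injective}, $\xi$ contains a unique maximal-support injective $M_{0,k}$, and inside its support $\{1,\dots,k\}$ there is a unique gap vertex $i$ appearing in no other module of $\xi$. I would partition $\xi\setminus\{M_{0,k}\}$ into three groups according to the position of the strand $c(x,y)$ of each module $M_{x,y}$ relative to $c(0,k)$: the left group $L$ of modules with $0\le x<y<i$, the center group $C$ of modules with $i\le x<y\le k$, and the right group $R$ of modules with $k\le x<y\le n$. The essential combinatorial input is that a strand $c(x,y)$ with $0<x<k<y$ would interleave and therefore cross $c(0,k)$, which is forbidden in an exceptional set; together with the condition that $i$ is not in the support of $M_{x,y}$, this forces every module of $\xi\setminus\{M_{0,k}\}$ into exactly one of the three groups.

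I would next verify that $L$, $C$, $R$ are themselves complete exceptional sets for the straight-oriented subquivers of types $\mathbb{A}_{i-1}$, $\mathbb{A}_{k-i}$, $\mathbb{A}_{n-k}$. Exceptionality passes to subsets, and completeness follows from the size bound (any exceptional set of $\mathbb{A}_m$ has at most $m$ elements) combined with $1+|L|+|C|+|R|=n=1+(i-1)+(k-i)+(n-k)$, which forces each subcollection to attain its maximal size. For the converse, given $k\in\{1,\dots,n\}$, $i\in\{1,\dots,k\}$, and any triple of complete exceptional sets $L, C, R$ of $\mathbb{A}_{i-1}, \mathbb{A}_{k-i}, \mathbb{A}_{n-k}$, I would show that $\{M_{0,k}\}\cup L\cup C\cup R$ is a complete exceptional set of $\mathbb{A}_n$ whose maximal-support injective is $M_{0,k}$ and whose gap is $i$: strands from different groups are either nested inside $c(0,k)$ with disjoint $x$-ranges (L vs.\ C) or lie strictly to the right of $c(0,k)$ (R), so no new crossings are introduced and acyclicity is preserved.

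Summing over the bijection and substituting $a=i-1$, $b=k-i$, $c=n-k$ yields
\[
E(n)=\sum_{k=1}^n\sum_{i=1}^k E(i-1)\,E(k-i)\,E(n-k)=\sum_{\substack{a+b+c=n-1\\ a,b,c\ge 0}}E(a)\,E(b)\,E(c).
\]
Multiplying this identity by $z^n$ and summing over $n\ge 1$ gives $g(z)-1=z\,g(z)^3$, the claimed functional equation. The main obstacle will be the strand analysis in the decomposition step: I need to check carefully that non-crossing with $c(0,k)$ combined with the uniqueness of the gap $i$ genuinely produces the clean three-way split, and that the inverse assembly yields a valid fundamental strand diagram free of crossings and cycles. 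Once the bijection is in hand, the recursion and the generating function identity follow mechanically.
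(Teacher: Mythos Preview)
Your proposal is correct and follows essentially the same argument as the paper: both use Lemma~\ref{lem: existence of gap and maximal injective} to locate the maximal injective $M_{0,k}$ and the unique gap vertex, then split $\xi\setminus\{M_{0,k}\}$ into three complete exceptional sets on the intervals to the left of the gap, to the right of the gap inside $[1,k]$, and to the right of $k$, with completeness forced by the count $(i-1)+(k-i)+(n-k)=n-1$. Your treatment is slightly more explicit about the inverse assembly and about deriving $g-1=zg^3$, but the decomposition and the substitution $a=i-1,\ b=k-i,\ c=n-k$ are identical to the paper's.
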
 

\begin{proof}
In this proof we will show that the modules in an exceptional set can only have support in three possible subsets of $[1,n]$. By Lemma \ref{lem: existence of gap and maximal injective}, let $k$ be maximal so that $M_{0,k}$ is in the set. Now the other objects in the exceptional set must have support in $[1,k]$ or have support disjoint from $[1,k]$. We can see this from the fact that in the corresponding strand diagram, since we have straight orientation, any strand beginning before $k$ and ending after $k$ must cross the strand corresponding to $M_{0,k}$, hence these objects can not occur in the same exceptional set at $M_{0,k}$. Again by Lemma \ref{lem: existence of gap and maximal injective}, there is a unique $p\in[1,k]$ which is not in the support of any object in the set other than $M_{0,k}$. \\

We have that $1 \leq p \leq k \leq n$. Let $a = p-1$, $b = k-p$, and $c = n-k$. Then $a,b,c \geq 0$, $a + b + c = n-1$, and $k = a+b+1$. Thus the exceptional set consists of $M_{0,k}$ and a subset of $A\coprod B\coprod C$ where
\begin{enumerate}
    \item $A$ is the set of objects with support in $[1,a]$,
    \item $B$ is the set of objects with support in $[a+2,k]$ and
    \item $C$ is the set of objects with support in $[k+1,n]$.
\end{enumerate}
Since an exceptional sequence can have at most $a, b$, and $c$ objects in $A, B$, and $C$, respectively, and $a+b+c = n-1$, we have the maximum. We conclude that our exceptional set consists of $M_{0,k}$ and exceptional sets in $A$, $B$, and $C$. The number of such sets are $E(a)$, $E(b)$, and $E(c)$ respectively. We conclude that the number of possible exceptional sets is 
$$\sum_{k=1}^n \sum_{p=1}^k E(p-1)E(k-p)E(n-k) = \displaystyle\sum_{\substack{a+b+c = n-1\\a,b,c\,\geq 0}} E(a)E(b)E(c).$$
\end{proof}

It is known that the generalized Catalan numbers satisfy the Rothe recursion given in the following proposition [\ref{ref: Rothe Convolution}].

\begin{prop}
For $k\geq 1$, the recursion $A_0(1,k) = A_1(1,k) = 1$ and $$A_n(1,k) = \displaystyle\sum_{\substack{a_1+a_2+\dots + a_k = n-1\\(a_1,a_2,\dots, a_k)\geq 0}} \displaystyle\prod_{i} A_{a_i}(1,k)$$
has unique solution $A_n(1,k) = {1\over kn+1}\binom{kn+1}{n}$.
\end{prop}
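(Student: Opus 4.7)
The plan is to convert the recursion into a functional equation for the generating function and then apply Lagrange inversion, which is the standard route for this kind of ``one step plus $k$ subproblems'' Catalan-type recursion.

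First I would set $f(z) = \sum_{n\geq 0} A_n(1,k) z^n$, where the sequence is defined by $A_0(1,k) = 1$ together with the stated convolution identity for $n\geq 1$. Observe that plugging $n = 1$ into the recursion forces $A_1(1,k) = 1$ automatically (the only tuple is all zeros, and each factor is $A_0(1,k) = 1$), so the stated initial condition $A_1(1,k) = 1$ is redundant and the recursion together with $A_0(1,k) = 1$ determines $A_n(1,k)$ uniquely by strong induction on $n$. This handles the uniqueness claim.

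Next I would translate the recursion into a functional equation. The right-hand side $\sum_{a_1+\cdots+a_k = n-1} \prod_i A_{a_i}(1,k)$ is exactly the $(n-1)$st coefficient of $f(z)^k$. Multiplying by $z^n$ and summing over $n\geq 1$ yields
\[
f(z) - 1 \;=\; z\, f(z)^k.
\]
Setting $w = f(z) - 1$ transforms this into $w = z(1+w)^k$, equivalently $z = w/(1+w)^k$, so $w$ is the compositional inverse (in $w$) of the power series $\phi(w) = w(1+w)^{-k}$.

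Now I would apply the Lagrange inversion formula in the form: if $z = w/\psi(w)$ with $\psi(0)\neq 0$, then $[z^n]\,w = \tfrac{1}{n}[w^{n-1}]\psi(w)^n$. Here $\psi(w) = (1+w)^k$, so
\[
A_n(1,k) \;=\; [z^n]\,f(z) \;=\; [z^n]\,w \;=\; \frac{1}{n}[w^{n-1}](1+w)^{kn} \;=\; \frac{1}{n}\binom{kn}{n-1}
\qquad (n\geq 1).
\]
Finally I would check the elementary identity
\[
\frac{1}{n}\binom{kn}{n-1} \;=\; \frac{(kn)!}{n!\,(kn-n+1)!} \;=\; \frac{1}{kn+1}\binom{kn+1}{n},
\]
which completes the verification that $A_n(1,k) = \tfrac{1}{kn+1}\binom{kn+1}{n}$. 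There is no real obstacle here: the only subtlety is recognising that $A_1(1,k) = 1$ follows from the recursion, so the hypothesis is consistent and uniqueness is immediate; the rest is a textbook application of Lagrange inversion.
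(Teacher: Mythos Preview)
Your argument is correct. The paper does not actually supply a proof of this proposition; it quotes the result from Gould's work on generalized Vandermonde convolutions (reference [\ref{ref: Rothe Convolution}]) and uses it as a black box to identify the closed form for $E(n)$. Your Lagrange-inversion derivation is the standard route and is carried out cleanly: the reduction to $f = 1 + z f^k$, the substitution $w = f - 1$, and the extraction $[z^n]w = \tfrac{1}{n}\binom{kn}{n-1}$ are all fine, and the final binomial identity is straightforward. The only thing worth noting is that you implicitly use $[z^n]f = [z^n]w$ for $n\ge 1$, which is immediate since $f = 1 + w$; and the formula also checks at $n=0$ since $\tfrac{1}{1}\binom{1}{0}=1$.
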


Combining the Rothe convolution with Lemma \ref{lem: recursion for g(z)} yields the following theorem.

\begin{thm} \label{thm: counting exceptional sets type A}
The number of exceptional sets of type $\mathbb{A}_n$ with straight orientation is $E(n) ={1\over{3n+1}}\binom{3n+1}n= {1\over 2n+1}\binom{3n}{n}. \hfill \square$
\end{thm}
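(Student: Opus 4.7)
The plan is to combine Lemma \ref{lem: recursion for g(z)} with the Rothe recursion stated in the preceding proposition, invoking uniqueness of the solution to the recursion to conclude the formula. Specifically, Lemma \ref{lem: recursion for g(z)} tells us that $E(n)$ satisfies
\[
E(0) = E(1) = 1, \qquad E(n) = \sum_{\substack{a+b+c = n-1 \\ a,b,c \geq 0}} E(a)E(b)E(c),
\]
which is precisely the Rothe recursion with $k = 3$. Since the recursion is well-posed (the value $E(n)$ is determined by $E(0),\dots,E(n-1)$), and the numbers $A_n(1,3) = \frac{1}{3n+1}\binom{3n+1}{n}$ satisfy the same recursion with the same initial conditions, we conclude $E(n) = \frac{1}{3n+1}\binom{3n+1}{n}$.

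The remaining step is to verify the stated equivalence $\frac{1}{3n+1}\binom{3n+1}{n} = \frac{1}{2n+1}\binom{3n}{n}$. This is a routine manipulation of factorials: expanding the left side as $\frac{(3n)!}{n!(2n+1)!}$ and rewriting as $\frac{1}{2n+1} \cdot \frac{(3n)!}{n!(2n)!}$ gives the right side directly. I would include this one-line computation for completeness.

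I do not anticipate any real obstacle here, as the bulk of the work has been done in Lemma \ref{lem: recursion for g(z)}, where the decomposition of an exceptional set according to the maximal injective $M_{0,k}$ and the unique gap $p$ in its support produced the cubic convolution. Given that the Rothe proposition is cited from the literature, the theorem follows essentially by matching recursions. The proof proposal is therefore short: state the match with $k=3$, invoke uniqueness of the solution to the recursion, and verify the binomial identity.
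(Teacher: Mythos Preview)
Your proposal is correct and matches the paper's approach exactly: the paper simply states that combining the Rothe convolution (the preceding proposition with $k=3$) with Lemma~\ref{lem: recursion for g(z)} yields the theorem, and marks it with a $\square$. Your additional verification of the binomial identity $\frac{1}{3n+1}\binom{3n+1}{n}=\frac{1}{2n+1}\binom{3n}{n}$ and the explicit appeal to uniqueness of the recursion's solution are details the paper leaves implicit.
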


\subsection{Ternary trees and relative projectives and injectives}

Using Lemma \ref{lem: recursion for g(z)} and its recursive proof we easily see that exceptional sets for $\mathbb{A}_n$ with straight orientation are in bijection with rooted ternary trees with $n$ nodes and the nodes are in bijection with the objects of the exceptional collection. Figures \ref{Fig: strand diagram for Fig 1} and \ref{Fig: 3-nary tree} give an example. Strands labeled $Xa,Xb,Xc$ by Definition \ref{def: Xa, Xb, Xc} give the children of strand $X$ in the ternary tree. If there is no strand labeled $Xx$, we put a leaf in the ternary tree in place of node $Xx$ for $x=a,b,c$. E.g., in Figure \ref{Fig: strand diagram for Fig 1}, $Aba,Abc$ do not exist. So, these are leaves in Figure \ref{Fig: 3-nary tree}.

%%%%%%%%%%%%%%%%
%	Strand diagram
%%%%%%%%%%%%%%%%
%
\begin{figure}[htbp]
\begin{center}
\begin{tikzpicture}%[scale=3]
%\draw[help lines=1,thick] (-5,-5) grid (5,4);
\begin{scope} % 0-1
\draw[thick,blue] (0,0)..controls (0,-.5) and (1,-.5)..(1,0)
(.5,0) node{$Aa$}; % Aa
\draw[thick,blue] (0,0)..controls (0,-1.25) and (4.3,-1.75)..(4,0)
(1.85,-.85) node{$A$}; % A
\draw[thick,blue] (0,0)..controls (-.5,-2.5) and (11.5,-2.5)..(11,0)
(5.5,-1.5) node{$R$}; % R
\end{scope}
\begin{scope}[xshift=2cm] % 2-3
\draw[thick,blue] (0,0)..controls (0,-.5) and (1,-.5)..(1,0)
(.5,0) node{$Abb$}; % Abb
\draw[thick,red] (0,0)..controls (0,-1) and (2,-1)..(2,0)
(1,-.5) node{$Ab$}; % Ab
\end{scope}
\begin{scope}[xshift=4cm] % 4-5
\draw[thick,blue] (0,0)..controls (0,-.5) and (1,-.5)..(1,0)
(.5,0) node{$Ac$}; % Ac
\end{scope}
\begin{scope}[xshift=6cm] % 6-7
\draw[thick,red] (0,0)..controls (0,-.5) and (1,-.5)..(1,0)
(.5,0) node{$Bc$}; % Bc
\draw[thick,red] (1,0)..controls (.7,-1.75) and (5,-1.25)..(5,0)
(3.15,-.85) node{$B$}; % B
\end{scope}
\begin{scope}[xshift=8cm] % 8-9
\draw[thick,red] (0,0)..controls (0,-.5) and (1,-.5)..(1,0)
(.5,0) node{$Bbb$}; % Bbb
\draw[thick,blue] (-1,0)..controls (-1,-1) and (1,-1)..(1,0)
(0,-.5) node{$Bb$}; % Bb
\end{scope}
\begin{scope}[xshift=10cm] % 10-11
\draw[thick,red] (0,0)..controls (0,-.5) and (1,-.5)..(1,0)
(.5,0) node{$Ba$}; % Ba
\end{scope}
\begin{scope}[xshift=12cm] % 12-13
\draw[thick,red] (0,0)..controls (0,-.5) and (1,-.5)..(1,0)
(.5,0) node{$Cbc$}; % Cbc
\end{scope}
\begin{scope}[xshift=13cm] % 13-14
\draw[thick,red] (0,0)..controls (0,-.5) and (1,-.5)..(1,0)
(.5,0) node{$Cb$}; % Cb
\draw[thick,blue] (-2,0)..controls (-1.7,-1.3) and (1,-1.3)..(1,0)
(-.5,-.75) node{$C$}; % C
\end{scope}
\begin{scope}[xshift=14cm] % 14-15
\draw[thick,blue] (0,0)..controls (0,-.5) and (1,-.5)..(1,0)
(.5,0) node{$Cc$}; % Cc
\end{scope}
\foreach \x in {0,1,...,15}
\draw[fill] (\x,0) circle[radius=1mm] (\x,.4) node{\x};
\end{tikzpicture}
\caption{A strand diagram for $A_{15}$ with labels according to Definition \ref{def: Xa, Xb, Xc}. This is a rooted tree with root 0. The longest injective $M_{0,11}$ gives the strand $R=c(0,11)$. The other injectives are $A,Aa$ (and $Aaa,Aaaa,$ etc if these were strands). Relative injectives, oriented right, are in blue. In Lemma \ref{lem: recursion for g(z)}, $k=11,p=6, a=5,b=5,c=4$. Note that the $A$ and $B$ branches are mirror images with opposite colors.
}
\label{Fig: strand diagram for Fig 1}
\end{center}
\end{figure}
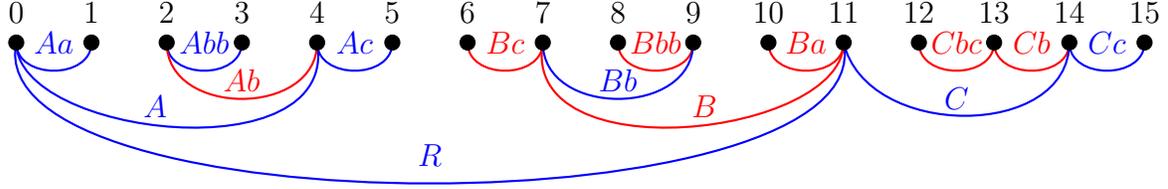
%
%%%%%%%%%%%%%%%%
%	end of Strand diagram
%%%%%%%%%%%%%%%%

Since the strand diagram has the maximum number of strands with vertex set $\{0,1,\cdots,n\}$ and has no cycles, it is a tree with $n$ edges. We take 0 to be root. Then each strand has an orientation pointing away from the endpoint closer to the root 0 in the tree. For example, the strand $c(i,j)$ is oriented to the right if and only if the unique path from the root 0 to $j$ passes through $i$. We now explain the labeling of our strands. This labeling uses the Hasse diagram of the exceptional set partially ordered by inclusion of supports (Figure \ref{Fig: support Hasse diagram}). We call this the {\bf support Hasse diagram}. Thus $M_{i,j}\le M_{p,q}$ if $(i,j]\subseteq (p,q]$. As in any partially ordered set, we say that $A$ {\bf covers} $B$ if $A>B$ and there is no $C$ so that $A>C>B$.

\begin{defn}\label{def: Xa, Xb, Xc}
Let $X=c(i,j)$ be a strand in our strand diagram corresponding to an object $M_{i,j}$ of an exceptional collection. Suppose that $X$ is oriented to the left so that $j$ is closer to the root than $i$. Then, the strands $Xa,Xb,Xc$, if they exist, are given as follows.
\begin{enumerate}
\item $Xa=c(x,j)$ where $i<x<j$ is minimal so that $M_{x,j}$ is an element of the exceptional collection. ($Xa$ is a leaf, the left child of $X$ in the ternary tree if no such $x$ exists.)
\item $Xb=c(i,y)$ is the largest strand so that $i<y<j$.
\item $Xc=c(z,i)$ is the largest strand so that $z<i$.
\end{enumerate}
Figure \ref{Fig: strand diagram for Fig 1} illustrates this with $X=B$. When $X=c(i,j)$ is oriented to the right, the definitions are analogous: $Xa=c(i,x)$ with $x<j$ maximal, $Xb=c(y,j)$ is maximal so that $i<y<j$ and $Xc=c(j,z)$ with $z>j$ maximal. (Figure \ref{Fig: strand diagram for Fig 1} illustrates this with $X=A,C$.) 
\end{defn}

In both cases of Definition \ref{def: Xa, Xb, Xc}, $Xa,Xc$ have the same orientation as $X$ and $Xb$ has the opposite orientation. $Xa,Xb$ are children of $X$ in the support Hasse diagram but $Xc$ is a sibling of $X$. ($Xc,X$ have the same parent or they both have maximal support.) Note that, for $X=R$ we have, $A=Ra$, $B=Rb$ and $C=Rc$.

The notation of Definition \ref{def: Xa, Xb, Xc} gives a ternary tree with root $R$. For any node $X$, its three children are, from left to right, $Xa,Xb,Xc$. See Figure \ref{Fig: 3-nary tree} where leaves are not labeled.

%%%%%%%%%%%%%%%%
%	Ternary tree
%%%%%%%%%%%%%%%%
%
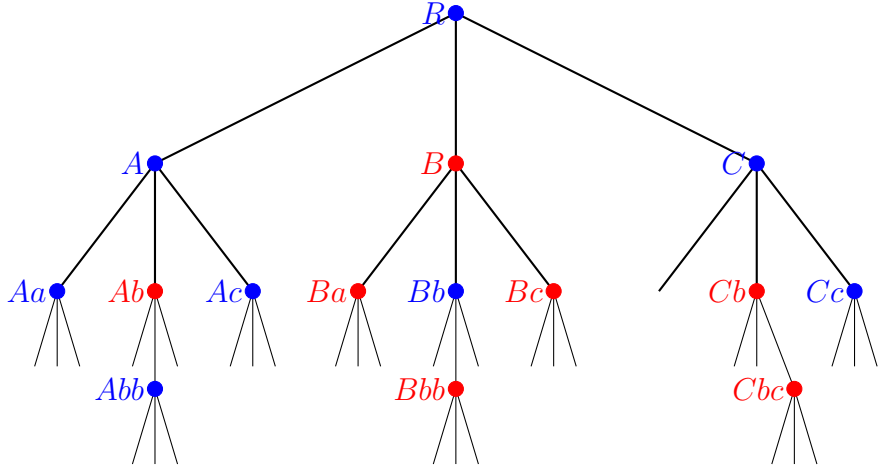
\begin{figure}[htbp]
\begin{center}
\begin{tikzpicture}%[scale=3]
%\draw[help lines=1,thick] (-5,-5) grid (5,4);
%\foreach \x in {-8,-6,...,8}\draw (\x,0) node{\x};\foreach \y in {-6,-4,...,8}\draw (0,\y) node{\y};
\coordinate (R) at (0,2);
\coordinate (A) at (-4,0);
\coordinate (B) at (0,0);
\coordinate (C) at (4,0);
\coordinate (Aa) at (-5.3,-1.7);
\coordinate (Ab) at (-4,-1.7);
\coordinate (Abb) at (-4,-3);
\coordinate (Ac) at (-2.7,-1.7);
\coordinate (Ba) at (-1.3,-1.7);
\coordinate (Bb) at (0,-1.7);
\coordinate (Bbb) at (0,-3);
\coordinate (Bc) at (1.3,-1.7);
\coordinate (Ca) at (2.7,-1.7);
\coordinate (Cb) at (4,-1.7);
\coordinate (Cbc) at (4.5,-3);
\coordinate (Cc) at (5.3,-1.7);
\begin{scope} % B subtree
\begin{scope}[xshift=-1.3cm,yshift=-1.7cm] %Ba
\draw (0,0)--(0,-1) (-.3,-1)--(0,0)--(.3,-1);
\end{scope}
\begin{scope}[yshift=-1.7cm] %Bb
\draw (0,0)--(0,-1.3) (-.3,-1)--(0,0)--(.3,-1);
\begin{scope}[yshift=-13mm] % Bbb
\draw (0,0)--(0,-1) (-.3,-1)--(0,0)--(.3,-1);
\end{scope}
\end{scope}
\begin{scope}[xshift=1.3cm,yshift=-1.7cm] %Bc
\draw (0,0)--(0,-1) (-.3,-1)--(0,0)--(.3,-1);
\end{scope}
\end{scope}
\begin{scope}[xshift=-4cm] % A subtree
\begin{scope}[xshift=-1.3cm,yshift=-1.7cm] %Aa
\draw (0,0)--(0,-1) (-.3,-1)--(0,0)--(.3,-1);
\end{scope}
\begin{scope}[yshift=-1.7cm] % Ab
\draw (0,0)--(0,-1.3) (-.3,-1)--(0,0)--(.3,-1);
\begin{scope}[yshift=-13mm] % Abb
\draw (0,0)--(0,-1) (-.3,-1)--(0,0)--(.3,-1);
\end{scope}
\end{scope}
\begin{scope}[xshift=1.3cm,yshift=-1.7cm] % Ac
\draw (0,0)--(0,-1) (-.3,-1)--(0,0)--(.3,-1);
\end{scope}
\end{scope}
\begin{scope}[xshift=4cm] % C subtree
\begin{scope}[xshift=-1.3cm,yshift=-1.7cm] %Ca
%\draw (0,0)--(0,-1) (-.3,-1)--(0,0)--(.3,-1);
\end{scope}
\begin{scope}[yshift=-1.7cm] % Cb
\draw (0,0)--(0,-1) (-.3,-1)--(0,0)--(.5,-1.3);
\begin{scope}[yshift=-13mm,xshift=5mm] % Cbc
\draw (0,0)--(0,-1) (-.3,-1)--(0,0)--(.3,-1);
\end{scope}
\end{scope}
\begin{scope}[xshift=1.3cm,yshift=-1.7cm] % Cc
\draw (0,0)--(0,-1) (-.3,-1)--(0,0)--(.3,-1);
\end{scope}
\end{scope}
\draw[thick] (Aa)--(A)--(R) (Ab)--(A)--(Ac);
\draw[thick] (Ba)--(B)--(R) (Bb)--(B)--(Bc);
\draw[thick] (Ca)--(C)--(R) (Cb)--(C)--(Cc);
\foreach \x in {R,A,Aa,Ac,Bb,C,Cc,Abb}
\draw[blue,fill] (\x) circle[radius=1mm] node[left]{$\x$} ;
\foreach \x in {B,Ab,Ba,Bc,Cb,Cbc,Bbb}
\draw[red,fill] (\x) circle[radius=1mm] node[left]{$\x$} ;
%\draw[thick,color=blue] (A) ellipse [x radius=2.8cm,y radius=2.1cm];
\end{tikzpicture}
\caption{This is the ternary tree corrsponding to Figure \ref{Fig: strand diagram for Fig 1}. Each node $X$ has three children $Xa,Xb,Xc$ given by Definition \ref{def: Xa, Xb, Xc}. Strands which are blue in Figure \ref{Fig: strand diagram for Fig 1} give blue nodes here. Blue nodes are those whose labels have an even number of $B$'s.}
\label{Fig: 3-nary tree}
\end{center}
\end{figure}
%
%%%%%%%%%%%%%%%%
%	end of Ternary tree
%%%%%%%%%%%%%%%%

\begin{lem}\label{lem: bijection with 3-nary trees}
Exceptional sets on $\mathbb{A}_n$ are in bijection with rooted ternary trees with $n$ nodes (and $2n+1$ leaves) where each node is labeled with an object of the exceptional set.
\end{lem}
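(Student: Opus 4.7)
The plan is to prove the bijection by induction on $n$, using the recursive decomposition of exceptional sets established in the proof of Lemma \ref{lem: recursion for g(z)} and matching it with the familiar recursive structure of ternary trees. The base case $n = 0$ is trivial: the empty exceptional set corresponds to the one-leaf ternary tree (no nodes, one leaf). Note that a rooted ternary tree with $n$ internal nodes automatically has $2n+1$ leaves, since $3n$ child slots among the internal nodes account for all non-root nodes, giving $3n = (n-1) + \text{(number of leaves)}$.

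For the inductive step, given an exceptional set $\bar\xi$ on $\mathbb{A}_n$ with straight orientation, Lemma \ref{lem: existence of gap and maximal injective} produces a unique maximal injective $R = M_{0,k}$ and a unique ``gap'' vertex $p \in [1,k]$ not in the support of any other object. Setting $a = p-1$, $b = k-p$, $c = n-k$, the proof of Lemma \ref{lem: recursion for g(z)} partitions the remaining objects of $\bar\xi$ into three sub-exceptional sets $\bar\xi_A, \bar\xi_B, \bar\xi_C$ on the intervals $[1,a]$, $[a+2,k]$, $[k+1,n]$, each a straight $\mathbb{A}$-quiver of smaller size. By the inductive hypothesis, each of these corresponds to a labeled ternary tree $T_A, T_B, T_C$ (with any empty set giving a leaf). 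I would then define the ternary tree for $\bar\xi$ to have root labeled by $R$ with left, middle, and right children equal to $T_A, T_B, T_C$ respectively. This yields a ternary tree with $n$ labeled nodes, and is clearly reversible: given such a tree, one reads off $R$ from the root label and applies the inverse inductive procedure to the three subtrees.

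To tie this recursive construction to Definition \ref{def: Xa, Xb, Xc}, I would verify that the roots of $T_A, T_B, T_C$ are precisely $Ra, Rb, Rc$ in the language of the definition, so the tree being built matches the one implicit in the figures. Since $R = c(0,k)$ is oriented to the right (its endpoint $0$ is the global root of the strand-tree), the definition asks for $Ra = c(0,x)$ with $x<k$ maximal, $Rb = c(y,k)$ with $0<y<k$ maximal, and $Rc = c(k,z)$ with $z>n$? Actually $Rc = c(k,z)$ maximal. These are, respectively, the longest projective in the sub-quiver on $[1,a]$ (viewed as the longest injective in the reverse-oriented sub-quiver), the longest injective on $[a+2,k]$, and the longest injective on $[k+1,n]$. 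Equivalently, after the dihedral symmetry identifying each sub-exceptional set with a straight $\mathbb{A}$-quiver, these are the maximal injectives produced by Lemma \ref{lem: existence of gap and maximal injective} applied to $\bar\xi_A, \bar\xi_B, \bar\xi_C$, which agree with the roots of $T_A, T_B, T_C$ by the inductive hypothesis.

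The main obstacle will be checking the identification of $Ra, Rb, Rc$ with the inductive sub-roots: the $B$-branch uses the same orientation convention as the ambient quiver, but the $A$-branch is a mirror image (as illustrated by the ``opposite colors'' in Figure \ref{Fig: strand diagram for Fig 1}), so one must carefully match ``maximal injective'' on one side with ``maximal projective'' on the other under the order-reversing identification. Once this compatibility is verified, the bijection is established and each node carries a canonical label, namely the object $M_{i,j}$ associated with the strand it represents.
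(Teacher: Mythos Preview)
Your overall approach is correct and essentially the same as the paper's: induct on $n$, use the decomposition from Lemma \ref{lem: recursion for g(z)} to split $\bar\xi\setminus\{R\}$ into three sub-exceptional sets, and attach the three inductively-built trees as children of the root $R$. The paper's proof is equally brief and proceeds the same way.

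However, you have the orientation-reversal on the wrong branch. With $R=c(0,k)$ oriented to the right, Definition \ref{def: Xa, Xb, Xc} gives $Ra=c(0,x)$ with $x$ maximal, which is the longest \emph{injective} (not projective) in the sub-quiver on $[1,a]$; this matches the root of $T_A$ directly, with no reversal. Likewise $Rc=c(k,z)$ is the longest injective on $[k+1,n]$, again with no reversal. It is $Rb=c(y,k)$, the largest strand ending at $k$, that is the longest \emph{projective} on $[a+2,k]$; in the strand-tree the path from $0$ to any vertex in $[a+2,k-1]$ runs through $k$ via $R$, so $B$ is oriented to the left and its subtree is the mirror-image one. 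The paper notes exactly this: ``the descendants of $B$ are in reverse lateral order since, unlike $R,A$ and $C$, $B$ is oriented to the left.'' Once you swap the roles of $A$ and $B$ in your final two paragraphs, the compatibility check goes through and your argument is complete.
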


\begin{proof} Definition \ref{def: Xa, Xb, Xc} gives the formula for the ternary tree with node labels associated to each strand diagram. Referring to the proof of Lemma \ref{lem: recursion for g(z)}, for any exceptional collection for $\mathbb{A}_n$ with $n\ge1$, the root $R$ of the ternary tree corresponds to the maximal injective module $M_{0,k}$ in the exceptional set. The children $A=Ra,B=Rb,C=Rc$ are as given in Definition \ref{def: Xa, Xb, Xc}. The rest is given by induction on $n$ where we note that the descendants of $B$ are in reverse lateral order since, unlike $R,A$ and $C$, $B$ is oriented to the left.
\end{proof}

We use the support Hasse diagram to determine the positions in the ternary tree occupied by the relatively injective and relatively projective objects of an exceptional collection on linear $\mathbb{A}_n$. An exceptional sequence is an ordered exceptional set. Such an ordering makes the support Hasse diagram into a rooted labeled forest. We then use the following theorem proved in [\ref{ref: Igusa and Sen Rooted Labeled Forests}]. 

\begin{lem}\label{lem: rooted labeled forests}
Objects in an exceptional sequence are both relatively projective and relatively injective if any only if they are roots of the rooted forest (objects with maximal support). A vertex which is not a root corresponds to a relatively projective, resp injective, object if and only if it comes before, resp. after, its parent in the exceptional sequence.
\end{lem}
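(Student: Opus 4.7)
The plan is to apply Theorem~\ref{thm: rel projective is independent of order} to reduce the statement to an analysis of the partial order $\prec$ induced by nonvanishing Hom and Ext, and then identify this order with the parent--child relation of the support Hasse forest. I first compute that for distinct indecomposables $M_{i,j}, M_{p,q}$ over $\Bbbk\mathbb{A}_n$ (straight orientation), $\operatorname{Hom}(M_{i,j},M_{p,q})\ne 0$ iff $p\le i<q\le j$ and $\operatorname{Ext}(M_{i,j},M_{p,q})\ne 0$ iff $i<p\le j<q$. Combined with the no-crossing condition on strand diagrams of exceptional collections, every such nonvanishing relation forces the corresponding pair of strands to share an endpoint.

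Next I classify the configurations between a non-root $Y=M_{i,j}$ and its Hasse parent $P=M_{p,q}$ according to shared endpoints: (A) $p<i$, $q=j$, giving $Y\prec P$ directly via Hom; (B) $p=i$, $q>j$, giving $P\prec Y$ directly via Hom; and (C) $p<i$, $q>j$, with no shared endpoint. For case (C) the key observation is that the strand diagram of a complete exceptional collection is a spanning tree on $\{0,1,\ldots,n\}$, so the sub-collections supported in the ``left gap'' $[p,i]$ and the ``right gap'' $[j,q]$ form sub-spanning-trees and hence must contain strands incident to $p$ and to $i$ (respectively, $j$ and $q$). Combined with no-crossing and the minimality of $P$ as Hasse parent, this forces the existence of Hasse siblings of $Y$ that share endpoints with $Y$ and with $P$ and that produce a transitive $\prec$-chain between $P$ and $Y$ in a uniquely determined direction. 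Consequently, the relative order of $Y$ and $P$ is the same in every valid exceptional sequence for $\xi$.

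Finally, I invoke the characterization from the proof of Theorem~\ref{thm: rel projective is independent of order}: $Y$ is relatively projective iff it is projective in $E^\perp$, where $E$ is the direct sum of the modules $\prec$-greater than $Y$. Since $\Bbbk Q$ is hereditary, this is equivalent to $\operatorname{Ext}(Y,N)=0$ for every indecomposable $N\in E^\perp$. The indecomposables $N=M_{c,d}$ with $\operatorname{Ext}(Y,N)\ne 0$ are exactly those with $i<c\le j<d$; a case analysis using (A)--(C) shows that each such $N$ is ruled out of $E^\perp$ precisely when no ancestor of $Y$ in the Hasse forest lies $\prec$-above $Y$, equivalently, when $Y$ comes before its parent $P$. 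For roots there is no parent and the parallel analysis simultaneously verifies both rel proj and rel inj. The injective statement then follows from Lemma~\ref{lem: duality of exceptional sequences} and Corollary~\ref{cor: rel injectives are independent of order} by duality. The main obstacle is case (C): the bridging-sibling argument requires careful combinatorial verification, and a cleaner alternative is induction on $|\xi|$ using the $(A,B,C)$-decomposition of Lemma~\ref{lem: recursion for g(z)}, in which the maximal injective $R=M_{0,k}$ is directly verified to be a root (and hence both rel proj and rel inj) at each inductive step.
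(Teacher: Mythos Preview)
The paper does not prove this lemma: it is quoted from the external reference [\ref{ref: Igusa and Sen Rooted Labeled Forests}] (Igusa--Sen), so there is no in-paper argument to compare against. I will therefore comment on your attempt on its own merits.

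Your overall strategy---reducing via Theorem~\ref{thm: rel projective is independent of order} to the transitive $\prec$-order and then computing Hom/Ext combinatorics for $M_{i,j}$---is sound, and your Hom/Ext formulas are correct. Cases (A) and (B), where $Y$ and its Hasse parent $P$ share an endpoint, do yield a direct $\prec$-relation and are fine.

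Case (C), however, has a genuine gap. Your claim that the sub-collections supported in the two gaps $[p,i]$ and $[j,q]$ ``form sub-spanning-trees'' is false as stated. For example, in $\mathbb{A}_4$ take the exceptional set with strands $c(0,4),\,c(1,3),\,c(0,1),\,c(1,2)$: here $Y=c(1,3)$ has Hasse parent $P=c(0,4)$, but the right gap $[3,4]$ contains no strand at all. What is actually true is that \emph{exactly one} of the two gaps is bridged by a chain of siblings linking $Y$ to $P$, and the side on which this happens determines the direction of the $\prec$-relation; establishing this uniformly is precisely the delicate combinatorics you defer. Moreover, your final step---``each such $N$ is ruled out of $E^\perp$ precisely when no ancestor of $Y$ lies $\prec$-above $Y$''---is asserted rather than proved: you must check this for \emph{all} indecomposables $N=M_{c,d}$ with $i<c\le j<d$, not only for members of the collection, and this verification is where the content of the lemma lies.

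Your closing remark is well taken: the induction via the $(A,B,C)$-decomposition of Lemma~\ref{lem: recursion for g(z)} is the cleaner route, since at each step the root $R=M_{0,k}$ is visibly both projective in the ambient category restricted to $\{E_{>R}\}^\perp$ and injective by duality, and the three subproblems inherit the hypothesis. That is almost certainly closer to the argument in [\ref{ref: Igusa and Sen Rooted Labeled Forests}], and I would recommend you carry it out in place of the case-(C) bridging argument.
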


%%%%%%%%%%%%%%%%
%	support Hasse diagram
%%%%%%%%%%%%%%%%
%
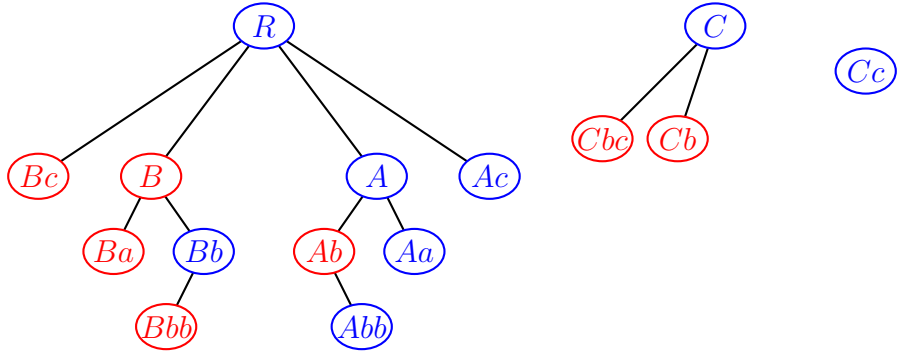
\begin{figure}[htbp]
\begin{center}
\begin{tikzpicture}%[scale=3]
\clip (0,-2.4) rectangle (12,2.4);
%\foreach \x in {-8,-6,...,8}\draw (\x,0) node{\x};\foreach \y in {-6,-4,...,8}\draw (0,\y) node{\y};
\coordinate (R) at (3.5,2);
\coordinate (C) at (9.5,2);
\coordinate (Cc) at (11.5,1.4);
\coordinate (Bc) at (.5,0);
\coordinate (B) at (2,0);
\coordinate (A) at (5,0);
\coordinate (Ac) at (6.5,0);
\coordinate (Cbc) at (8,0.5);
\coordinate (Cb) at (9,0.5);
\coordinate (Ba) at (1.5,-1);
\coordinate (Bb) at (2.7,-1);
\coordinate (Ab) at (4.3,-1);
\coordinate (Aa) at (5.5,-1);
\coordinate (Bbb) at (2.2,-2);
\coordinate (Abb) at (4.8,-2);
\draw[thick] (Bc)--(R)--(B)--(Ba) (B)--(Bb)--(Bbb)
;
\draw[thick] (Ac)--(R)--(A)--(Aa) (A)--(Ab)--(Abb)
;
\draw[thick] (Cbc)--(C)--(Cb)
;
\foreach \x in {A,Ac,R,C,Cc,Bb,Aa,Abb,Bc,B,Cbc,Cb,Ba,Ab,Bbb}
\draw[fill,white] (\x) ellipse[x radius=4mm,y radius=3mm];
%\draw[thick,color=blue] (A) ellipse [x radius=2.8cm,y radius=2.1cm];
\foreach \x in {A,Ac,R,C,Cc,Bb,Aa,Abb}
\draw[blue,thick] (\x) ellipse[x radius=4mm,y radius=3mm] node{$\x$};
\foreach \x in {Bc,B,Cbc,Cb,Ba,Ab,Bbb}
\draw[red,thick] (\x) ellipse[x radius=4mm,y radius=3mm] node{$\x$};
\end{tikzpicture}
\caption{Support Hasse diagram for Figure \ref{Fig: strand diagram for Fig 1}. Left children (those which come before their parents in the exceptional sequence) are red. These are relatively projective but not relatively injective. The rest: the roots, $R,C,Cc$, and the right children (which come after their parents), are relatively injective and colored blue.}
\label{Fig: support Hasse diagram}
\end{center}
\end{figure}
%
%%%%%%%%%%%%%%%%
%	end of support Hasse diagram
%%%%%%%%%%%%%%%%

\begin{prop}\label{prop: rel injectives are oriented left}
A strand $c(i,j)$ corresponds to a relatively injective object $M_{i,j}$ in the exceptional set if and only if it is oriented to the right.
\end{prop}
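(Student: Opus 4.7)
My plan is to induct on the ternary tree from Lemma \ref{lem: bijection with 3-nary trees}, proving in lockstep that a node $X$ is oriented right if and only if $X$ is relatively injective. Both statements will be shown to satisfy the same recursion: they hold at the root $R$, are preserved under passage to $a$- and $c$-children, and flip under passage to $b$-children. The base case is immediate: $R = M_{0,k}$ has strand $c(0,k)$ oriented right (its smaller endpoint is already the tree root $0$), and $R$ is the maximum of its support-Hasse tree, so it is relatively injective by Lemma \ref{lem: rooted labeled forests}. The orientation half of the recursion is encoded directly in Definition \ref{def: Xa, Xb, Xc}, so the real content is the relative-injectivity half.

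For the step from $Y$ to a support-Hasse child $Ya$ or $Yb$, I will use the standard interval-module Hom formula $\operatorname{Hom}(M_{i,j},M_{p,q}) \ne 0 \iff p\le i<q\le j$ to verify that when parent and child share the smaller endpoint (Case B) one has $\operatorname{Hom}(\text{parent},\text{child}) \ne 0$, so the parent must precede the child in any exceptional ordering and the child is relatively injective by Lemma \ref{lem: rooted labeled forests}; sharing the larger endpoint (Case A) is symmetric and makes the child relatively projective. A short unwinding of Definition \ref{def: Xa, Xb, Xc} then shows $Ya$ falls in the ``inherit'' case and $Yb$ in the ``flip'' case of this dichotomy, which exactly matches the orientation recursion on these two positions.

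The main obstacle is the $c$-position, where $Yc$ is a support-Hasse \emph{sibling} rather than a child of $Y$, so no direct Hom constraint is available. I will first establish that $Y$ and $Yc$ must share a common support-Hasse parent $P$ (or both be support-Hasse roots), using the non-crossing condition on fundamental strand diagrams: if $Y = M_{i,j}$ is oriented right and $Yc = M_{j,z}$, then any $P = M_{a,b}$ with $b < z$ would force $P$'s arc and $Yc$'s arc to have interleaved endpoints $a \le i < j < b < z$ and therefore cross, which is forbidden; the same non-crossing constraint also rules out any candidate intermediate support-Hasse ancestor of $Yc$ strictly below $P$. Next, using $\tau M_{i,j} = M_{i+1,j+1}$ for $j<n$ in straight orientation, a direct Ext computation gives $\operatorname{Ext}^1(Y,Yc) \ne 0$ when $Y$ is right and $\operatorname{Ext}^1(Yc,Y) \ne 0$ when $Y$ is left. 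In both cases $Y$ and $Yc$ are forced to lie on the same side of $P$ in every exceptional ordering, so Lemma \ref{lem: rooted labeled forests} yields that they share relative-injectivity status. Combined with the orientation inheritance from Definition \ref{def: Xa, Xb, Xc}, the two invariants remain locked together and the induction closes; the support-Hasse root subcase follows from the same crossing argument, since any module enclosing $Yc$'s support would also enclose $Y$'s.
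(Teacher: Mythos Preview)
Your approach is essentially the paper's: induct down the ternary tree, handling $Ya$ and $Yb$ via the Hom direction with their support-Hasse parent $Y$, and handling $Yc$ via the Ext direction with its support-Hasse sibling $Y$ together with the inductive hypothesis on $Y$. The paper organises the same argument as a downward induction on size and likewise relies on the fact (stated just before Definition~\ref{def: Xa, Xb, Xc}, but not proved there) that $Y$ and $Yc$ are siblings in the support Hasse diagram.

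There is one small gap in your justification of that sibling claim. Your crossing argument for ``the parent $P = M_{a,b}$ of $Y$ must have $b \ge z$'' writes the interleaving as $a \le i < j < b < z$, silently assuming $b > j$. A priori $b = j$ is permitted by $(i,j] \subsetneq (a,b]$, and in that case $c(a,j)$ and $c(j,z)$ share the endpoint $j$ rather than cross, so the argument does not apply. In fact $b = j$ \emph{is} impossible when $Y=c(i,j)$ is oriented right, but this needs a short extra step: if $c(a,j)$ with $a<i$ were in the diagram, the tree path from $0$ to $i$ would have to enter the region under $c(a,j)$ through one of its endpoints $a$ or $j$, and either choice forces a vertex to be simultaneously an ancestor and a proper descendant of $i$. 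Once that boundary case is excluded your crossing argument goes through; the dual case $a=i$ when $Y$ is oriented left needs the mirror treatment.
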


By duality (Lemma \ref{lem: duality of exceptional sequences}) this is equivalent to the following.

\begin{cor}\label{cor: rel projectives are oriented right}
A strand $c(i,j)$ corresponds to a relatively projective object in the exceptional set if and only if the unique path from $i$ to $n$ in the strand diagram passes through $j$.
\end{cor}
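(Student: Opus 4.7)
The plan is to deduce this directly from Proposition \ref{prop: rel injectives are oriented left} by invoking the duality $D=\operatorname{Hom}_{\Bbbk}(-,\Bbbk)$ of Lemma \ref{lem: duality of exceptional sequences}. The opposite quiver $Q^{op}$ of straightly oriented $\mathbb{A}_n$ has arrows $n\to n-1\to\cdots\to 1$; relabeling vertices by $k\mapsto n+1-k$ converts $Q^{op}$ back into straightly oriented $\mathbb{A}_n$. On the level of strand diagrams on $S_{n,\varepsilon}=\{(x_0,0),\dots,(x_n,0)\}$, this relabeling is the reflection $i\mapsto n-i$ of the index set $\{0,1,\dots,n\}$. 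First, I would verify that the duality combined with this relabeling sends the indecomposable $M_{i,j}$ (support $[i+1,j]$) to $M_{n-j,\,n-i}$, and hence the strand $c(i,j)$ to the strand $c(n-j,n-i)$, while carrying the given exceptional set to an exceptional set on the relabeled straight $\mathbb{A}_n$.

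Next I would apply Lemma \ref{lem: duality of exceptional sequences}: $M_{i,j}$ is relatively projective in the original exceptional set if and only if its image $M_{n-j,n-i}$ is relatively injective in the dual exceptional set. By Proposition \ref{prop: rel injectives are oriented left}, the latter holds if and only if the strand $c(n-j,n-i)$ is oriented to the right in the dual strand diagram, meaning that the unique path in that strand diagram from its root $0$ to the endpoint $n-i$ passes through $n-j$.

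Finally, I would translate this back through the reflection $k\mapsto n-k$, which sends the dual root $0$ to $n$ and swaps ``left'' with ``right.'' The condition that the dual path from $0$ to $n-i$ passes through $n-j$ becomes the condition that the path in the original strand diagram from $n$ to $i$ passes through $j$, i.e.\ that the unique path from $i$ to $n$ passes through $j$. The main (minor) obstacle is bookkeeping: making sure that the duality on representations, the relabeling that restores straight orientation, and the induced reflection of the strand diagram all compose to the same involution on strand diagrams, and that this involution genuinely exchanges ``oriented right toward root $0$'' with ``path to $n$ passes through $j$.'' Once that is verified, the corollary follows immediately from Proposition \ref{prop: rel injectives are oriented left}.
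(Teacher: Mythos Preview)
Your proposal is correct and is exactly the paper's approach: the paper deduces the corollary from Proposition \ref{prop: rel injectives are oriented left} by the duality of Lemma \ref{lem: duality of exceptional sequences}, and you have simply spelled out the bookkeeping (the relabeling $k\mapsto n+1-k$ on $Q^{op}$ and the induced reflection $i\mapsto n-i$ on strand endpoints) that makes this duality explicit on strand diagrams. The details you give are accurate, so nothing further is needed.
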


\begin{proof}[Proof of Proposition \ref{prop: rel injectives are oriented left}]
Let $k_0=0$ and let $k_1>k_0$ be maximal so that $c(k_0,k_1)$ is a strand in the strand diagram. This is $R=c(0,k)$. Let $k_2>k_1$ be maximal so that $C=c(k_1,k_2)$ is a strand, and so on. Then $c(k_i,k_{i+1})$ form a sequence or strands going from 0 to $n$. These are all oriented right, giving a path from 0 to $n$, and they give the roots of the support Hasse diagram and are thus both relatively projective and relatively injective. 

Next, consider a strand whose support is not maximal. This will be either $Xa,Xb$ or $Xc$ for another strand $X=c(i,j)$. Suppose first  $X=c(i,j)$ is oriented to the right. Then $Xa=c(i,x)$ is also oriented right and comes after its parent $X$. So, $Xa$ is relatively injective by Lemma \ref{lem: rooted labeled forests}. $Xb=c(y,j)$ is oriented left and comes before its parent $X$ since $M_{yj}\subset M_{ij}$. So, $Xb$ is relatively projective and not relatively injective. Lastly, $Xc=c(j,z)$ is oriented to the right and Ext$(X,Xc)\neq0$. So, either $Xc$ is a root (with maximal support), in which case it is relative injective and relatively projective, or $Xc,X$ have the same parent $Z$ in which case $X$, being oriented right, is relatively injective by downward induction on size and comes after its parent $Z$. Since $Xc$ comes after $X$, it also comes after its parent $Z$. So, $Xc$ is also relatively injective. 

The case when $X$ is oriented left is similar, but easier since, in that case, $X$ cannot be a root, eliminating one subcase. This proves the proposition in all cases.
\end{proof}

\begin{thm}\label{thm: rel injectives are blue}
A node in the ternary tree corresponds to a relatively injective object in the exceptional set if and only if it has an even number of letter $b$'s in its label. For example, $Cc$ and $Bb$ are relatively injective, but $Cb,Bbb$ are not.
\end{thm}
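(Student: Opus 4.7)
The plan is to combine Proposition \ref{prop: rel injectives are oriented left} with a straightforward parity argument tracking how the orientation of a strand changes as we descend in the ternary tree.

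First, I would recall that by Proposition \ref{prop: rel injectives are oriented left}, a strand $c(i,j)$ in our diagram corresponds to a relatively injective object if and only if $c(i,j)$ is oriented to the right; that is, the endpoint closer to the root $0$ is the left endpoint $i$. So the theorem reduces to showing that the node labeled $w$ in the ternary tree is oriented to the right if and only if $w$ contains an even number of $b$'s.

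Second, I would observe that the root $R = c(0,k)$ corresponds to the maximal injective $M_{0,k}$, and hence is oriented to the right since its left endpoint is $0$, the root of the underlying tree on $\{0,1,\dots,n\}$. This is the base case and it is consistent with the claim, since the empty word has zero $b$'s.

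Third, I would invoke the observation recorded immediately after Definition \ref{def: Xa, Xb, Xc}: for any node $X$, the children $Xa$ and $Xc$ inherit the orientation of $X$, while $Xb$ has the opposite orientation. A straightforward induction on the length of the label $w$ then shows that the orientation of the node labeled $w$ agrees with the orientation of $R$ precisely when $w$ has an even number of letters equal to $b$. Since $R$ is oriented to the right, this says that the node labeled $w$ is oriented to the right if and only if $w$ contains an even number of $b$'s. Combining this with the reduction in the first step completes the proof; there is essentially no obstacle here, as every ingredient has already been set up, and the argument is a one‑line parity count once Proposition \ref{prop: rel injectives are oriented left} is in hand.
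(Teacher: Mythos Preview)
Your proposal is correct and follows essentially the same approach as the paper: both reduce to Proposition \ref{prop: rel injectives are oriented left}, use that $R$ is oriented right as the base case, and then induct on the length of the label using the observation after Definition \ref{def: Xa, Xb, Xc} that $Xa,Xc$ preserve the orientation of $X$ while $Xb$ reverses it.
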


\begin{proof}
This follows from Proposition \ref{prop: rel injectives are oriented left} by induction on the length of the label since $R,A,C$ point to the right, but $B$ is oriented left and $Xa,Xc$ point in the same direction as $X$ but $Xb$ points in the opposite direction from $X$.
\end{proof}

\begin{cor}\label{cor: recursion for rel inj}
For $n,m\ge0$, let $N_{n,m}$ denote the number of exceptional sets on $A_{n+m}$ having $n$ relatively injective elements. Then the $N_{n,m}$ are given by the following recursion. $N_{0,0}=1$, $N_{0,k}=0$ for $k>0$ and, for $n\ge1$ we have:
\begin{equation}\label{eq: recursion for Nnm}
	N_{n,m}=\sum_{\substack{a+b+c=n-1\\ i+j+k=m\\ a,b,c,i,j,k\ge0}} N_{a,i}N_{j,b}N_{c,k}.
\end{equation}
The same recursion holds if ``injective'' is replaced with ``projective''.
\end{cor}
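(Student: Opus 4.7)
The plan is to use the ternary–tree encoding of exceptional sets from Lemma \ref{lem: bijection with 3-nary trees} together with the parity criterion of Theorem \ref{thm: rel injectives are blue}: a node is relatively injective exactly when its label contains an even number of letter $b$'s. I would then rerun the root decomposition from Lemma \ref{lem: recursion for g(z)}, refining the count by rel-inj parity in each of the three subtrees.

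Fix an exceptional set $\xi$ on straight $\mathbb{A}_{n+m}$ with ternary tree $T$. The root $R$ corresponds to the maximal injective $M_{0,k}\in\xi$; its label is empty and hence has an even ($=0$) number of $b$'s, so $R$ is relatively injective and contributes one to the rel-inj count. The remaining $n+m-1$ nodes split into three ternary subtrees $T_A$, $T_B$, $T_C$ rooted at the children $Ra$, $Rb$, $Rc$, and by Lemma \ref{lem: recursion for g(z)} these subtrees are themselves ternary trees of exceptional collections on shorter straight $\mathbb{A}$-quivers. Suppose the standalone rel-inj/non-rel-inj counts of $T_A$, $T_B$, $T_C$ are $(a,i)$, $(j,b)$, $(c,k)$ respectively; then the subtrees are counted by $N_{a,i}$, $N_{j,b}$, $N_{c,k}$.

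The key step is the parity bookkeeping when passing from standalone subtrees back to $T$. For a node in $T_A$ or $T_C$, the prefix it acquires inside the big label is ``$Ra$'' or ``$Rc$'', which contains no letter $b$, so its parity of $b$'s in the big tree matches its parity in the standalone subtree; the relatively injective nodes of $\xi$ inside $T_A$ (resp. $T_C$) are exactly those counted as relatively injective in the standalone subtree, numbering $a$ (resp. $c$). For a node in $T_B$, the prefix ``$Rb$'' contributes one extra $b$, so the parity flips and the relatively injective nodes of $\xi$ inside $T_B$ are the \emph{non}-relatively-injective nodes of the standalone $T_B$, of which there are $b$. Including the root's contribution $(1,0)$, the total rel-inj count of $\xi$ is $1+a+b+c$ and the total non-rel-inj count is $i+j+k$. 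Setting these equal to $n$ and $m$ yields the constraints $a+b+c=n-1$ and $i+j+k=m$; since the three subtrees are independent and the base cases $N_{0,0}=1$ and $N_{0,\ell}=0$ for $\ell>0$ follow from Lemma \ref{lem: existence of gap and maximal injective}, summing over all admissible $(a,b,c,i,j,k)\ge 0$ gives \eqref{eq: recursion for Nnm}.

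The projective version follows from Lemma \ref{lem: duality of exceptional sequences}: the opposite of the straight-oriented $\mathbb{A}_{n+m}$ is isomorphic to the straight-oriented $\mathbb{A}_{n+m}$ after relabeling $i\leftrightarrow n+m+1-i$, so duality yields a self-bijection of exceptional sets on straight $\mathbb{A}_{n+m}$ that exchanges relative injectivity with relative projectivity. Hence the number of exceptional sets with exactly $n$ relatively projective and $m$ non-relatively-projective elements is also $N_{n,m}$, and the same recursion holds. The main subtlety will be the parity-flip step in the middle subtree: one must verify that the standalone subtree at $Rb$ is the ternary tree of a genuine exceptional collection produced via Definition \ref{def: Xa, Xb, Xc} applied inside that subtree, so that Theorem \ref{thm: rel injectives are blue} applies there, and that the only discrepancy upon reinsertion into $T$ is the single extra $b$ in every label prefix, which gives the required swap of the two indices of $N_{j,b}$.
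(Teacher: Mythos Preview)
Your proof is correct and follows essentially the same approach as the paper: both use the ternary-tree root decomposition, observe that the root is always relatively injective, and exploit the color reversal in the $B$-subtree (which you phrase as the parity flip from the extra letter $b$ in the prefix, and the paper phrases by reference to its Figure~\ref{Fig: 3-nary tree}) to obtain the index swap $N_{j,b}$ in the middle factor. Your treatment is more explicit than the paper's---in particular you spell out the duality argument for the projective statement, which the paper leaves implicit---but the underlying argument is the same.
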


\begin{proof}
Since the root $R$ is relatively injective, $n$ must be $\ge1$ if $n+m\ge1$. This makes $N_{0,k}=0$ for $k>0$. After the root $R$, the remaining $n-1$ relatively injective and $m$ other objects must be distributed among the three subtrees with roots $A,B,C$, say $a,b,c$ relative injectives and $i,j,k$ others nodes for these subtrees. The $A$ and $C$ subtrees are like the $R$-tree and there are $N_{a,i}$, $N_{c,k}$ such trees. But the $B$-subtree has the colors reversed in Figure \ref{Fig: 3-nary tree}. So, there are $N_{j,b}$ such subtrees. This gives the recursion.
\end{proof}

In terms of the two variable generating function
\[
	f(x,y)=\sum_{n,m\ge0} N_{n,m}x^ny^m
\]
the recursion is given by:
\[
	f(x,y)=1+xf(x,y)^2f(y,x).
\]
For one variable $g(z)=f(z,z)$ this recovers the previous recursion: $g(z)=1+zg(z)^3$. For $m=0$, the solution is the Catalan number:
\[
	N_{n,0}=\frac1{n+1}\binom{2n}n=C_n.
\]

%%%%%%%%%%%%%%%%
%	lattice path subsection
%%%%%%%%%%%%%%%%

Exceptional collections for $\mathbb{A}_n$ are also in bijection with \emph{lattice paths of height $n$} which are defined to be paths from $(0,0)$ to $(2n+1,n)$ consisting of edges between lattice points in the plane (points with integer coordinates) where each step is either up or right of unit length and so that $x\le 2y$ except at the last step where $x=2y+1$. We also need the notion of a \emph{lattice subpath of height $h$} which is a portion of the lattice path going from some point $(a,b)$ to a point $(a+2h+1,b+h)$ which stays above the line $x-a=2(y-b)+1$ until the last step, i.e., this is the translation by $(a,b)$ of a lattice path of height $h$. We say that the lattice subpath is \emph{even or odd} depending on the parity of $a$.

\begin{thm}\label{thm: lattice paths and relative injectives}
Exceptional collections for $\mathbb{A}_n$ are in bijection with lattice paths of height $n$. The edges of the lattice paths are given by the vertices of the corresponding ternary tree with vertical edge $R$ followed by other vertices in lexicographic order with leaves of the tree giving the $2n+1$ horizontal edges and nodes giving the $n$ vertical edges. Thus, the $n$ vertical edges correspond to the objects of the exceptional collection. Furthermore, the relatively injective objects are those vertical edges whose first coordinates are even integers.
\end{thm}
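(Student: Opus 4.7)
The strategy is to apply Lemma \ref{lem: bijection with 3-nary trees} and then construct the lattice path by pre-order depth-first traversal of the ternary tree: emit an up-step for each internal node encountered and a right-step for each leaf. Since any ternary tree with $k$ internal nodes has exactly $2k+1$ leaves, the resulting walk consists of $n$ up-steps and $2n+1$ right-steps, starts at $(0,0)$, opens with an up-step (visiting the root $R$), and ends at $(2n+1,n)$. I plan to verify by induction on the number of internal nodes that each intermediate lattice point satisfies $x \le 2y$ and that $x = 2y+1$ holds only at the very end. The key input is that the three subtrees hanging off any internal node each have an odd leaf count, so traversing them in order brings the walker through the positions $x=2y-1$, $x=2y$, and $x=2y+1$ at the ends of subtrees one, two, and three respectively, while each subpath is a translate of a smaller lattice path that satisfies the inequality by the inductive hypothesis.

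The inverse map is produced recursively by the same rule: given a lattice path, the first step must be an up-step (else $x=1>0=2y$) and corresponds to the root, and the remainder decomposes uniquely into three subpaths demarcated by the first hit of $x=2y-1$, the first subsequent hit of $x=2y$, and the endpoint $x=2y+1$, each itself a translate of a smaller lattice path. Alternatively, since lattice paths of height $n$ (the Fuss--Catalan paths) are well-known to be enumerated by $\frac{1}{2n+1}\binom{3n}{n}$, which matches the count of exceptional sets given by Theorem \ref{thm: counting exceptional sets type A}, any well-defined map of the above form is automatically a bijection.

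For the statement about relatively injective objects, I will combine Theorem \ref{thm: rel injectives are blue} with a short parity calculation. Theorem \ref{thm: rel injectives are blue} says that a node is relatively injective precisely when its label contains an even number of $b$'s. By induction on the depth of a node $X$ in the ternary tree, I will show that the first coordinate of the vertical edge assigned to $X$ has the same parity as the number of $b$'s in its label. The base case $X=R$ is immediate, since the root gives the initial vertical edge at $x=0$ and has zero $b$'s. For the inductive step, the pre-order traversal visits $X$ and then the subtrees at $Xa$, $Xb$, $Xc$ in order; between $X$ and $Xa$ no horizontal edges are emitted, matching the fact that $Xa$ has the same $b$-count as $X$; the $Xa$-subtree contributes an odd number of right-steps, which flips the parity exactly as $Xb$ gains a $b$; and the $Xb$-subtree likewise contributes an odd count, restoring the parity at $Xc$, which has the same $b$-count as $X$. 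The main obstacle is organizing the pre-order bookkeeping cleanly; the combinatorial heart of the parity argument is just the identity that a ternary tree on $k$ internal nodes has $2k+1$ leaves, which drives every parity flip.
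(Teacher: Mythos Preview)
Your proposal is correct and follows essentially the same approach as the paper: both argue by induction on $n$, decompose a lattice path of height $n$ into the initial up-step $R$ followed by three lattice subpaths ending at the lines $x=2y-1$, $x=2y$, $x=2y+1$ (corresponding to the subtrees at $A,B,C$), and deduce the parity statement for relative injectives from the fact that each subtree contributes an odd number $2k+1$ of horizontal steps, so that the $B$-subpath begins at odd $x$-coordinate while the $A$- and $C$-subpaths begin at even $x$-coordinate. Your phrasing via pre-order traversal and explicit description of the inverse is slightly more detailed than the paper's, but the structure of the argument is the same.
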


\begin{proof} The proof is by induction on $n\ge1$. The unique ternary tree with one node $R$ has three leaves $A,B,C$ and these four vertices correspond to the four steps in the unique lattice path of height 1. The unique vertical step in this lattice path, $R$ going from $(0,0)$ to $(0,1)$, corresponds to the unique object in the exceptional collection of $A_1$ and this is relatively injective and thus blue in the tree and in the lattice path. So, the theorem holds for $n=1$.
%%%%%%%%%%%%%%%%
%	basic lattice path
%%%%%%%%%%%%%%%%
%
\begin{center}
\begin{tikzpicture}%[scale=3]

\coordinate (R0) at (0,0);
\coordinate (R1) at (1,0);
\coordinate (R2) at (0,1);
\coordinate (A1) at (1,1);
\coordinate (B1) at (2,1);
\coordinate (C1) at (3,1);
\coordinate (A) at (.5,1);
\coordinate (B) at (1.5,1);
\coordinate (C) at (2.5,1);
\coordinate (R) at (0,.5);
\draw (R1)--(C1);

\draw[very thick,blue] (R0)--(R2);
\draw[blue,thick] (R2)--(A1) (B1)--(C1);
\draw[red,thick] (A1)--(B1);
\draw[blue] (R)node[left]{$R$};
\draw[blue] (A)node[below]{\tiny$A$};
\draw[blue] (C)node[below]{\tiny$C$};
\draw[red] (B)node[below]{\tiny$B$};

%\draw[help lines=1,thick] (-5,-5) grid (5,5);
\foreach \x in {R0,R1,R2,C1}
\draw[fill,blue] (\x) circle[radius=2pt];
\foreach \x in {A1,B1}
\draw[fill,red] (\x) circle[radius=2pt];
%\foreach \y in {-5,...,5}\draw (0,\y) node{\y};
%\draw[thick,color=blue] (0,1) ellipse [x radius=2.8cm,y radius=2.1cm];
\end{tikzpicture}
\end{center}
%
%%%%%%%%%%%%%%%%
%	end of basic lattice path
%%%%%%%%%%%%%%%%

For $n\ge2$, we recall that ternary trees with $n$ nodes consist of the root $R$ together with three subtrees with roots $A,B,C$ which are the children of $R$. The children of any node $X\neq R$ are labeled $Xa,Xb,Xc$ and the relatively injective objects are those with an even number of $b$'s in their label. These are colored blue and the other are colored red. Thus, subtrees $A,C$ have the same color scheme as the whole tree but subtree $B$ has the opposite color scheme. See Figure \ref{Fig: 3-nary tree} and the bottom right of Figure \ref{Fig: lattice path to ternary tree bijection}.

Any lattice path of height $n\ge2$ consists of a vertical edge $R$ followed by three lattice subpaths of heights $a,b,c$ where $a+b+c=n-1$. The reason for this is simple. In order to get from the point $(0,1)$ at the top of edge $R$ to the line $L_1:x=2y+1$, the path must pass the lines $L_{-1}:x=2y-1$ and $L_0:x=2y+0$. If $a,b,c\ge0$ are the heights of these lattice subpaths, we must have $a+b+c=n-1$. Since $a,b,c<n$, we have by induction that these lattice subpaths correspond bijectively to ternary trees with $a,b,c$ nodes respectively.

The first and third lattice subpaths are even since they start at $(0,1)$, $(2a+2b+2,a+b+1)$ with even first coordinates. Thus the red/blue color schemes are the same as for the whole tree. However, the second lattice subpath is odd since it starts at $(2a+1,a+1)$. So, the parity of the first coordinates in the second lattice subpath is reversed. This matched the reversal of the colors in the corresponding subtree. So, the blue nodes in the ternary tree correspond exactly to those vertical edges of the lattice path with even first coordinates. This proves the theorem.
\end{proof}

%%%%%%%%%%%%%%%%
%	lattice path <-> ternary tree bijection
%%%%%%%%%%%%%%%%
%
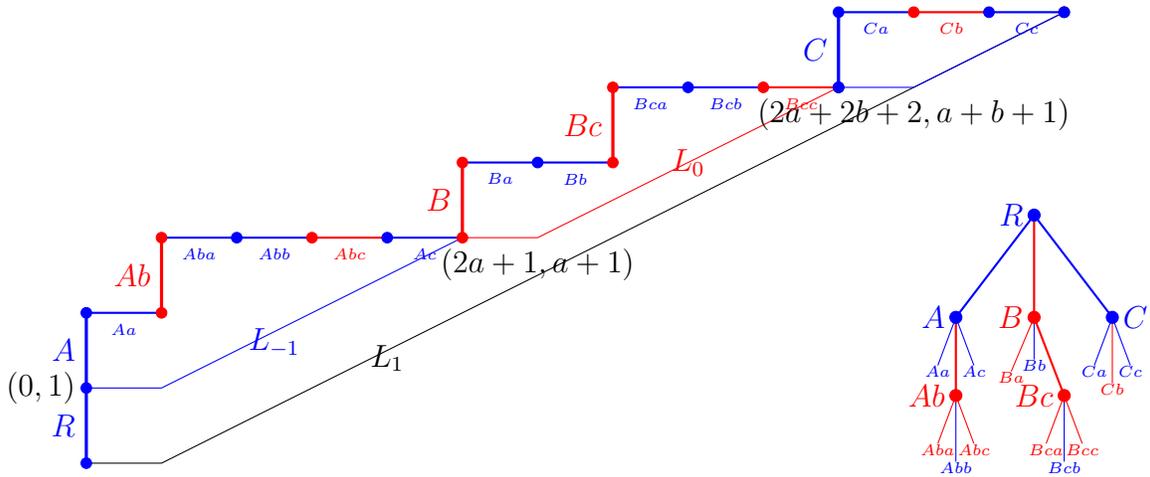
\begin{figure}[htbp]
\begin{center}

\begin{tikzpicture}%[scale=3]

\begin{scope}[yshift=3.3cm,xshift=11cm,scale=.8] % tree
\begin{scope}[xshift=2cm] % scope one
\begin{scope}[xshift=-1.3cm,yshift=-1.7cm] %A
	\draw[thick,red] (0,0)--(0,-1.3);% node[left]{\tiny$Aa$};
	\draw[blue] (-.3,-.9) node{\tiny$Aa$};
	\draw[blue] (.3,-.9) node{\tiny$Ac$};
	\draw[blue] (-.3,-.8)--(0,0)--(.3,-.8);
	\begin{scope}[yshift=-13mm,xshift=0mm] % Ab
		\draw[blue] (0,0)--(0,-1.1) ;
		\draw[red] (-.3,-.8)--(0,0)--(.3,-.8);
		\draw[red] (-.3,-.9) node{\tiny$Aba$};
		\draw[red] (.3,-.9) node{\tiny$Abc$};
		\draw[blue] (0,-1.2) node{\tiny$Abb$};
	\end{scope}
\end{scope}
\begin{scope}[yshift=-1.7cm] % B
	\draw[blue] (0,0)--(0,-.7) ;
	\draw[red]  (-.38,-.9)--(0,0);
	\draw[red,thick]  (0,0)--(.5,-1.3);
	\draw[red] (-.38,-1) node{\tiny$Ba$};
	\draw[blue] (0,-.8) node{\tiny$Bb$};
	\begin{scope}[yshift=-13mm,xshift=5mm] % Bc
		\draw[blue] (0,0)--(0,-1.1);
		\draw[red] (-.3,-.8)--(0,0)--(.3,-.8);
		\draw[red] (-.3,-.9) node{\tiny$Bca$};
		\draw[red] (.3,-.9) node{\tiny$Bcc$};
		\draw[blue] (0,-1.2) node{\tiny$Bcb$};
	\end{scope}
\end{scope}
\begin{scope}[xshift=1.3cm,yshift=-1.7cm] % C
%\draw (0,0)--(0,-1) (-.3,-1)--(0,0)--(.3,-1);
	\draw[red] (0,0)--(0,-1.1);
	\draw[blue](-.3,-.8)--(0,0)--(.3,-.8);
	\draw[blue] (-.3,-.9) node{\tiny$Ca$};
	\draw[blue] (.3,-.9) node{\tiny$Cc$};
	\draw[red] (0,-1.2) node{\tiny$Cb$};
\end{scope}
\end{scope} % end scope one

\begin{scope}[xshift=-2cm] % scope two
\draw[thick,blue] (2.7,-1.7)--(4,0)--(5.3,-1.7);
\draw[thick,red] (4,-1.7)--(4,0);
\draw[blue,fill] (4,0) circle[radius=1mm] node[left]{$R$} ;
\draw[blue,fill] (5.3,-1.7) circle[radius=1mm] node[right]{$C$} ;
\draw[blue,fill] (2.7,-1.7) circle[radius=1mm] node[left]{$A$} ;
\draw[red,fill] (2.7,-3) circle[radius=1mm] node[left]{$Ab$} ;
%\foreach \x in {C0,Cc0}
%\draw[blue,fill] (\x) circle[radius=1mm] node[right]{\x} ;
\draw[red,fill] (4,-1.7) circle[radius=1mm] node[left]{$B$} ;
\draw[red,fill] (4.5,-3) circle[radius=1mm] node[left]{$Bc$} ;
\end{scope} % end scope two

\end{scope} % end tree
%%%%%%%%%%%%%%%%%%%%

\coordinate (R0) at (0,0);
\coordinate (R1) at (1,0);
\coordinate (R2) at (0,1);
\coordinate (R3) at (1,1);

\coordinate (A1) at (0,2);
\coordinate (Aa1) at (1,2);
\coordinate (Ab1) at (1,3);
\coordinate (Aba1) at (2,3);
\coordinate (Abb1) at (3,3);
\coordinate (Abc1) at (4,3);
\coordinate (Ac1) at (5,3); % A ends at (2a+1,a+1): a=2
\coordinate (Ac2) at (6,3); 
\draw[thick,blue] (A1)--(Aa1)--(Ab1)--(Aba1)--(Abb1)(Abc1)--(Ac1);
\draw[thick,red] (Abb1)--(Abc1);
%\draw[fill] (Ac1) circle[radius=2pt];
\coordinate (B1) at (5,4);
\coordinate (Ba1) at (6,4);
\coordinate (Bb1) at (7,4);
\coordinate (Bc1) at (7,5);
\coordinate (Bca1) at (8,5);
\coordinate (Bcb1) at (9,5);
\coordinate (Bcc1) at (10,5); % B ends at (2a+2b+2,a+b+1): b=2
\coordinate (Bcc2) at (11,5);
\draw[thick,blue] (Ac1)--(B1)--(Ba1)--(Bb1)--(Bc1)--(Bca1)--(Bcb1);
\draw[fill] (Bcc1) circle[radius=2pt];

\coordinate (C1) at (10,6);
\coordinate (Ca1) at (11,6);
\coordinate (Cb1) at (12,6);
\coordinate (Cc1) at (13,6);
\draw[thick,blue] (Bcc1)--(C1)--(Ca1) (Cb1)--(Cc1);
\draw[fill] (Cc1) circle[radius=2pt];

\coordinate (A) at (0,1.5);
\coordinate (Ab) at (1,2.5);
\coordinate (Aa) at (.5,2);
\coordinate (Aba) at (1.5,3);
\coordinate (Abb) at (2.5,3);
\coordinate (Abc) at (3.5,3);
\coordinate (Ac) at (4.5,3);

\coordinate (B) at (5,3.5);
\coordinate (Ba) at (5.5,4);
\coordinate (Bb) at (6.5,4);
\coordinate (Bc) at (7,4.5);
\coordinate (Bca) at (7.5,5);
\coordinate (Bcb) at (8.5,5);
\coordinate (Bcc) at (9.5,5);
\coordinate (C) at (10,5.5);
\coordinate (Ca) at (10.5,6);
\coordinate (Cb) at (11.5,6);
\coordinate (Cc) at (12.5,6);
\coordinate (L1) at (4,1.4);
\coordinate (Lm1) at (2.5,1.6);
\coordinate (L0) at (8,4);

\draw (L1) node{$L_1$};
\draw[red] (L0) node{$L_0$};
\draw[blue] (Lm1) node{$L_{-1}$};

\foreach \x in {Aa,Aba,Abb,Ac,Ba,Bb,Bca,Bcb,Ca,Cc}
\draw[blue] (\x) node[below]{\tiny$\x$};
\foreach \x in {Abc,Bcc,Cb}
\draw[red] (\x) node[below]{\tiny$\x$};
\coordinate (R) at (0,.5);
\draw (R0)--(R1)--(Cc1);
\draw[blue] (Ac1)--(R3)--(R2);
\draw[red](Ac1)--(Ac2)--(Bcc1);
\draw[blue] (Bcc1)--(Bcc2)--(Cc1);

\draw[red,thick] (Bcb1)--(Bcc1);
\draw[red,thick] (Ca1)--(Cb1);

\draw[very thick,blue] (R0)--(R2);
\draw[blue,very thick] (R2)--(A1);
\draw[blue,very thick] (Bcc1)--(C1);
\draw[red,very thick] (Aa1)--(Ab1);
\draw[red,very thick] (Ac1)--(B1);
\draw[red,very thick] (Bb1)--(Bc1);
%\draw[red,thick] (A1)--(B1);
\draw[blue] (R)node[left]{$R$};
\draw[blue] (A)node[left]{$A$};
\draw[blue] (C)node[left]{$C$};
\draw[red] (B)node[left]{$B$};
\draw[red] (Ab)node[left]{$Ab$};
\draw[red] (Bc)node[left]{$Bc$};

\draw (R2) node[left]{$(0,1)$};
\draw (Ac2) node[below]{$(2a+1,a+1)$};
\draw (Bcc2) node[below]{$(2a+2b+2,a+b+1)$};
%\draw[help lines=1,thick] (-5,-5) grid (5,5);
\foreach \x in {R0,R2,C1,A1,Aba1,Abc1,Ba1,Bca1,Cb1,Bcc1,Cc1}
\draw[fill,blue] (\x) circle[radius=2pt] ;
\foreach \x in {Aa1,Ab1,Abb1,Ac1,B1,Bb1,Bc1,Bcb1,Ca1}
\draw[fill,red] (\x) circle[radius=2pt] ;
%\foreach \y in {-5,...,5}\draw (0,\y) node{\y};
%\draw[thick,color=blue] (0,1) ellipse [x radius=2.8cm,y radius=2.1cm];
\end{tikzpicture}
\caption{Illustrating Theorem \ref{thm: lattice paths and relative injectives}: A lattice path of height $n$ consists of the vertical edge $R$ together with three sublattice paths of heights $a,b,c$ where $a+b+c=n-1$. These lattice subpaths have first edges $A,B,C$. Lattice subpath $B$ is odd since it starts at $(2a+1,a+1)$, but lattice subpaths $A,C$ starting at $(0,1)$, $(2a+2b+2,a+b+1)$ are even. The edge labels are the vertex labels of a uniquely determined ternary tree in lexicographic order after $R$. The nodes of the tree correspond to the vertical edges of the lattice path.}
\label{Fig: lattice path to ternary tree bijection}
\end{center}
\end{figure}
%
%%%%%%%%%%%%%%%%
%	end of  lattice path <-> ternary tree bijection
%%%%%%%%%%%%%%%%

%%%%%%%%%%%%%

\subsection{$\tilde{\mathbb{A}}_n$ Straight Orientation}\label{sec: affine A exceptional collections}

\indent 

Throughout this subsection, let $Q$ represent a quiver of type $\tilde{\mathbb{A}}_{n}$ with straight orientation. We will show that the number of families of exceptional sets over the path algebra of this quiver is also counted by the generalized Catalan numbers, or the Rothe numbers, or the Rothe--Hagan coefficients of the first type [\ref{ref: Gould Generalized Catalan Numbers}] and [\ref{ref: Master's Thesis Generalized Catalan Numbers}]. In 1956, Gould defined the following numbers
$$A_n(a,b) = {a\over a + bn}\binom{a+bn}{n}.$$

\noindent
We call these numbers the \textbf{Rothe numbers, Rothe--Hagan coefficients,} or the \textbf{generalized Catalan numbers}. By taking $a = 1$ and $b = 2$, we get the Catalan numbers. Moreover, we by taking $a = 1$ and $b = k$, we get 

$$C_n^k = {1\over kn+1}\binom{kn+1}{n}.$$

These are called the \textbf{$k$-ary numbers} or the \textbf{$k$-Catalan numbers}, which also generalize the Catalan numbers. It is known that the $k$-ary numbers count certain lattice paths, staircase tilings, and $k$-ary trees [\ref{ref: RK}]. As we will see in the upcoming sections, the Rothe numbers also count lattice paths in $\mathbb{R}^2$ that satisfy certain conditions, and in the case $a = 4$ and $b = 3$, families of exceptional collections. To this end, consider the annulus $A_{Q^{\bm{\varepsilon}}}$ associated to $Q^{\bm{\varepsilon}}$, so there is one marked point on the inner circle and $n$ marked points on the outer circle. Recall the action  of ${2\pi\over n}$ clockwise outer Dehn twists of the annulus on the set of small fundamental arc diagrams over $\Bbbk Q$, analogous to the one in Figure \ref{fig: Example of orbit}. Each outer equivalence class of a small fundamental arc diagram contains precisely $n$ distinct families and hence inner equivalence classes of small fundamental arc diagrams by Theorem \ref{thm: bijection with small diagrams}. We will show there are precisely $A_{n-1}(4,3) = {4\over 3n+1} \binom{3n+1}{n-1}$ outer equivalence classes. To do this, we need the following definition. \\

\begin{defn}\label{defn: Universal Dateline}
The \textbf{universal date line} of the annulus $A_{Q^{\bm{\varepsilon}}}$ associated to $Q^{\bm{\varepsilon}}$ is the line segment connecting the marked point on the inner circle of the annulus to the midpoint of the segment of the outer circle of the annulus connecting the marked points corresponding to $n$ and $1$.
\end{defn}

Notice that the only way an arc connecting the inner and outer circles crosses the universal date line is if its parameter is nonzero. Moreover, note that an arc corresponding to a regular module crosses the universal dateline if and only if its corresponding strand in the universal cover is not contained in $[0,n+1]$. In order to count the outer equivalence classes, we need the following lemma and another Rothe recursion formula. \\

\begin{lem} \label{lem: existence of nice diagram}
There exists a unique small fundamental arc diagram in each outer equivalence class with no universal date line crossings that contains only one preprojective arc.
\end{lem}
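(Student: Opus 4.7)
The plan is to work in the universal cover $\mathbb{R} \times [0, 1]$ of the annulus, where the universal dateline lifts to vertical segments at $x = kn$ for $k \in \mathbb{Z}$ and the $n$ small representatives in an outer equivalence class correspond to the $n$ possible choices of fundamental domain $[kn, (k+1)n]$ modulo $n$. Within the fundamental domain $[0, n]$, a preprojective arc $a(i,0)[0]$ lifts to a strand from $(i - 1/2, 1)$ to $(n, 0)$, a preinjective arc $a(0,j)[0]$ lifts to a strand from $(0,0)$ to $(j - 1/2, 1)$, and regular arcs lie on the top boundary between half-integer points.

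For existence, I would start with any small fundamental arc diagram $D$ in the outer equivalence class and apply clockwise outer Dehn twists strategically. Under one clockwise twist, a preprojective $a(i,0)[0]$ with $i < n$ shifts to $a(i+1, 0)[0]$ (still preprojective with no dateline crossing), whereas $a(n, 0)[0]$ has its outer endpoint wrap across the dateline, and in the small representative of the new family, this arc appears as the preinjective $a(0, 1)[0]$ (analogously to the conversion visible in Figure \ref{fig: Example of orbit}). This operation reduces the preprojective count by one. A symmetric statement for counterclockwise twists converts $a(0, 1)[0]$ to $a(n, 0)[0]$, increasing the count by one. Since $D$ must contain at least one bridging arc---otherwise its $n+1$ arcs would all be exterior, but the outer $n$-gon admits only finitely many pairwise non-crossing interior chords, contradicting cardinality---one may iterate these operations to reach a diagram $D'$ in the outer equivalence class with exactly one preprojective arc. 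It then remains to verify that $D'$ has no dateline crossings: the single preprojective arc $a(i, 0)[0]$ fixes a specific fundamental domain of the cover, and using Lemma \ref{lem: only inner outer arcs} together with the no-cycle condition on fundamental arc diagrams, all regular arcs of $D'$ must lift within this domain.

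For uniqueness, suppose $D_1$ and $D_2$ both satisfy the required conditions in the same outer equivalence class. Each lifts entirely into a single fundamental domain of the universal cover, and the location $((k+1)n, 0)$ of the unique preprojective's inner endpoint pins down this fundamental domain. Since $D_1$ and $D_2$ differ by outer Dehn twists---which correspond to horizontal shifts of the fundamental domain in the cover---and both use the same domain, the shift is trivial and $D_1 = D_2$.

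The main obstacle is rigorously verifying the second half of existence: that once the preprojective count has been reduced to one, the resulting small diagram has no regular arcs crossing the dateline. While the reduction to one preprojective singles out a candidate fundamental domain, showing that all regular arcs must also fit within it requires a careful combinatorial argument using Lemma \ref{lem: only inner outer arcs} and the characterization of regular-arc dateline crossings as lifts not contained in $[0, n+1]$, combined with the cycle-free property of fundamental arc diagrams applied to the interaction between the unique preprojective and the remaining regular and preinjective arcs.
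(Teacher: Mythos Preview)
Your overall strategy reverses the order of the paper's two steps, and this creates a genuine gap that you yourself flag as the ``main obstacle''---and which your proposed fix does not address. The paper proceeds by \emph{first} eliminating dateline crossings and \emph{then} reducing the preprojective count, not the other way around. Concretely: since a fundamental arc diagram has no cycles, there is some interval on the outer boundary not in the support of any regular arc; a suitable outer Dehn twist places the dateline in that gap, yielding a small diagram with no dateline crossings. At this point the strand diagram lives entirely in $[0,n+1]$ and is an exceptional collection for $\mathbb{A}_{n+1}$ with straight orientation, so the dual of Lemma \ref{lem: existence of gap and maximal injective} applies: inside the support of the longest preprojective there is a unique vertex missed by every other object, and moving the dateline to that gap (by further outer twists) leaves exactly one preprojective while preserving the no-crossing property. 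Your route, by contrast, first twists down to one preprojective and then needs to show no regular arc crosses the dateline---but Lemma \ref{lem: only inner outer arcs}, which you invoke, has ``no dateline crossings'' as a \emph{hypothesis}, not a conclusion, so it cannot help here. Indeed, twisting a diagram that has no dateline crossing can produce one: a regular arc $a(i,n)$ becomes $a(i+1,1)$, which crosses the dateline.

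Your uniqueness argument is also not correct. The inner endpoint of the unique preprojective is always the single inner marked point; its location in the cover does not distinguish different members of the outer equivalence class, since outer Dehn twists fix the inner boundary. The paper's uniqueness argument is instead combinatorial: once one has a diagram with no dateline crossings, the gap guaranteed by (the dual of) Lemma \ref{lem: existence of gap and maximal injective} is \emph{unique}, and since the construction places the dateline precisely at such a gap, two diagrams satisfying the conclusion must have their datelines at the same position and hence coincide.
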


\begin{proof}
Let $D$ be a fundamental arc diagram. Then there exists some interval at which no regular arc has support, for if not $D$ would contain a cycle and not be fundamental. Suppose this gap in support is $[i,j]$. Then after some number of ${2\pi\over n}$ clockwise outer Dehn twists, the universal date line would be between the marked points corresponding to $i$ and $j$. By taking a small diagram in this family we have in each outer equivalence class a small fundamental arc diagram that has no universal date line crossings. \\

Now let $D'$ be an element of the outer equivalence class of $D$ with no universal date line crossings. This means that the corresponding strand diagram $\tilde{d'}$ under the bijection given in [\ref{ref: Maresca}] is entirely contained in $[0,n+1]$. Moreover, this strand diagram has an orientation vector given by $\bm{\varepsilon} = (-,+,+,\dots,+,-)$, hence yields an exceptional collection of an $\mathbb{A}_{n+1}$ quiver with straight orientation. Suppose $\tilde{d'}$ has at least two preprojective strands. Let $k$ and $l$ be the lengths of the longest and second longest strands. Then the strands are denoted by $c(n-k,n+1)$ and $c(n-l,n+1)$ respectively. By the dual of Lemma \ref{lem: existence of gap and maximal injective}, there is a unique $i \in [n-k,n-l]$ that is not in the support of any module other than the one corresponding to $c(n-k,n+1)$. Therefore no arc in $D'$ has support in $[i-1,i]$. Since there exists a sequence of ${2\pi\over n}$ clockwise outer Dehn twists placing the universal date line between the marked points $i-1$ and $i$, there exists a small fundamental arc diagram in each outer equivalence class that has no universal date line crossings and only one preprojective arc. \\

It remains to show uniqueness, which we will do in two cases. First suppose there are two non-regular arcs in the arc diagram $D$. Then the gap in the support of the non-regular arcs is necessarily unique, for if not, the exceptional collection would not be complete. Now suppose $D$ contains $r\geq3$ non-regular arcs. Let $c(0,l)$ and $c(n-k,n+1)$ be the longest preinjective and preprojective strands respectively. Since the collection is complete, each vertex from $l$ to $n-k$ must be in the support of some strand, so there is no gap in support from $l$ to $n-k$. The argument in the second paragraph asserts that there is a unique gap in the support of $c(n-k,n+1)$ in which we can place the universal date line and obtain an exceptional collection with only one preprojective and $r-1$ preinjectives. By duality, there is a unique gap in the support of $c(0,l)$ in which we can place the universal date line and obtain an exceptional collection with only one preinjective and $r-1$ preprojectives. Since these two gaps are unique and there is no gap between these two maximal strands, it follows that there is only one way to obtain a small fundamental arc diagram in each outer equivalence class with no universal date line crossings that contains only one preprojective arc.
\end{proof}

The following proposition from [\ref{ref: Rothe Convolution}] gives the recursion formula we need to count the families. Note that the Rothe numbers in general satisfy a recursion like the one in the next proposition.

\begin{prop} \label{prop: Rothe recursion A(4,3)} 
For $k\geq 1$, the recursion $A_0(4,3)=1, A_1(4,3) = 4$ and \\
$$A_n(4,3) = \displaystyle\sum_{k = 0}^{n-1}\displaystyle\sum_{i=0}^k A_{i}(1,3)A_{k-i}(1,3)\displaystyle\sum_{j=0}^{n-k-1}A_j(1,3)A_{n-k-j-1}(1,3)$$
has unique solution $A_{n-1}(4,3) = {4\over 3n+1} \binom{3n+1}{n-1}$.
\end{prop}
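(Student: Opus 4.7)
The plan is to reduce the double-convolution recursion to the classical Rothe--Hagan convolution identity, which for fixed $b \geq 1$ asserts
\[
\sum_{i=0}^{m} A_i(p,b)\, A_{m-i}(q,b) \;=\; A_m(p+q,\, b),
\]
and which can be cited from [\ref{ref: Rothe Convolution}] and [\ref{ref: Gould Generalized Catalan Numbers}]; Proposition 5.4 is a special instance of it. The idea is that two applications of this convolution collapse the nested double sum in the statement to the closed form $\tfrac{4}{3n+1}\binom{3n+1}{n-1}$.

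First I would apply the identity with $p=q=1$ and $b=3$ to each of the two inner sums in $k$. This replaces $\sum_{i=0}^{k} A_i(1,3)\, A_{k-i}(1,3)$ by $A_k(2,3)$ and, symmetrically, $\sum_{j=0}^{n-k-1} A_j(1,3)\, A_{n-k-j-1}(1,3)$ by $A_{n-k-1}(2,3)$. After these substitutions the recursion collapses to the single convolution
\[
A_n(4,3) \;=\; \sum_{k=0}^{n-1} A_k(2,3)\, A_{n-1-k}(2,3).
\]
Next I would apply the same convolution identity once more, now with $p=q=2$ and $b=3$, to recognize this sum as $A_{n-1}(2+2,3) = A_{n-1}(4,3)$. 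The closed-form definition of the Rothe--Hagan coefficients then yields $A_{n-1}(4,3) = \tfrac{4}{3n+1}\binom{3n+1}{n-1}$, matching the claimed solution.

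Uniqueness is automatic: the right-hand side of the recursion depends only on the numbers $A_i(1,3)$, which are already uniquely determined by Proposition 5.4, so the recursion expresses $A_n(4,3)$ explicitly and requires no induction on $A_n(4,3)$ itself; the two prescribed initial values $A_0(4,3) = 1$ and $A_1(4,3) = 4$ handle the small cases outside the natural range of the double sum and are readily verified against Gould's closed form. The only conceptual obstacle is justifying the Rothe--Hagan convolution identity, but since it is classical and already implicitly used for Proposition 5.4, a citation suffices and the proof reduces to the two bookkeeping applications of the identity just described.
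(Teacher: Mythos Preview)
Your argument is correct and matches the paper's approach: the paper does not supply its own proof of this proposition but cites it directly from Gould's work on the Rothe--Hagan convolution, and your two applications of the identity $\sum_{i} A_i(p,b)\,A_{m-i}(q,b)=A_m(p+q,b)$---first with $p=q=1$ to collapse each inner sum to $A_k(2,3)$ and $A_{n-1-k}(2,3)$, then with $p=q=2$ to obtain $A_{n-1}(4,3)$---are precisely how one extracts the result from that source. One small fix: the earlier proposition on $A_n(1,k)$ that you invoke is Proposition~5.3 in the paper's numbering, not 5.4.
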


\noindent
We are now ready to count the families.

\begin{thm} \label{thm: counting exceptional families}
The number of outer equivalence classes of families of exceptional collections of $\Bbbk Q$-modules is $A_{n-1}(4,3) = {4\over 3n+1} \binom{3n+1}{n-1}$ and thus the number of families of exceptional collections of $\Bbbk Q$-modules is $nA_{n-1}(4,3) = {4n\over 3n+1} \binom{3n+1}{n-1}$.
\end{thm}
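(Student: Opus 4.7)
The plan is to combine Lemma \ref{lem: existence of nice diagram} with a two-stage decomposition modeled on the proof of Lemma \ref{lem: recursion for g(z)}. First, I observe that Lemma \ref{lem: existence of nice diagram} provides a unique representative for each outer equivalence class: a small fundamental arc diagram with no universal dateline crossings and exactly one preprojective arc. Since the no-crossing condition means the lifted strand diagram is contained in the fundamental interval $[0, n+1]$, and the orientation of $\tilde{\mathbb{A}}_n$ restricts there to $(-,+,+,\dots,+,-)$, such a representative is in bijection with a complete exceptional set for the straightly oriented $\mathbb{A}_{n+1}$ quiver having exactly one strand with endpoint at vertex $n+1$ (the lift of the unique preprojective). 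Thus counting outer equivalence classes reduces to counting these constrained $\mathbb{A}_{n+1}$ exceptional sets.

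Next I count these by iterating the decomposition of Lemma \ref{lem: recursion for g(z)}. Applying Lemma \ref{lem: existence of gap and maximal injective} produces a longest injective strand $c(0,k)$ with a unique uncovered vertex $p \in [1,k]$, splitting the remaining strands uniquely into complete exceptional sets on $[1, p-1]$, $[p, k]$, and $[k, n+1]$; the first two pieces contribute $E(p-1)\,E(k-p)$ options by Theorem \ref{thm: counting exceptional sets type A}, and the last must absorb the unique preprojective. For this rightmost piece, which is an $\mathbb{A}_{c}$ exceptional set with $c = n+1-k$, the unique preprojective has the form $c(k+c-l, n+1)$ for some length $l \ge 1$, and the non-crossing condition again forces every other strand to lie in $[k, k+c-l]$ or in $[k+c-l, n]$, yielding complete exceptional sets of sizes $c-l$ and $l-1$. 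The strand counts $1+(c-l)+(l-1)=c$ match, so each stage is a genuine bijection and the rightmost piece contributes $\sum_{d+e=c-1} E(d)\,E(e)$.

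Setting $s = k - 1$ and using $E(m) = A_m(1,3)$, the total number of outer equivalence classes becomes
\begin{equation*}
\sum_{s=0}^{n-1} \sum_{i=0}^{s} A_i(1,3)\,A_{s-i}(1,3) \sum_{j=0}^{n-s-1} A_j(1,3)\,A_{n-s-j-1}(1,3),
\end{equation*}
which is exactly the recursion in Proposition \ref{prop: Rothe recursion A(4,3)} and hence equals $A_{n-1}(4,3) = \tfrac{4}{3n+1}\binom{3n+1}{n-1}$. Multiplying by $n$, the number of families in each outer equivalence class, yields the second formula. The main step requiring care is verifying that the two-stage decomposition is a bijection; invertibility comes from the uniqueness of the gap $p$ given by Lemma \ref{lem: existence of gap and maximal injective} and from the uniqueness of the preprojective $c(k+c-l, n+1)$, while the matching of strand counts to quiver sizes at each stage ensures the reconstructed diagram is a complete exceptional set.
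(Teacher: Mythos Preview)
Your proof is correct and follows essentially the same approach as the paper. Both arguments invoke Lemma \ref{lem: existence of nice diagram} to obtain a unique representative strand diagram on $[0,n+1]$ with one preprojective, and then decompose it into four smaller $\mathbb{A}$-type exceptional sets whose sizes sum to $n-1$, yielding the Rothe convolution of Proposition \ref{prop: Rothe recursion A(4,3)}. The only difference is presentational: the paper identifies all four pieces at once using the maximal preinjective (with its gap) together with the unique preprojective, whereas you carry out the decomposition in two stages, first applying Lemma \ref{lem: recursion for g(z)} to split off the rightmost piece and then splitting that piece via the preprojective; the resulting four pieces and the final count are identical.
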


\begin{proof}
By Lemma \ref{lem: existence of nice diagram}, each outer equivalence class is uniquely represented by a strand diagram on $n+2$ vertices with only one preprojective. In this strand diagram, there must exist a maximal length preinjective strand $c(0,k+1)$ and a maximal length preprojective strand $c(n-j,n+1)$. Note that $n-j \geq k+1$, for if not the strands would cross. By Lemma \ref{lem: existence of gap and maximal injective}, there exists a unique $i\in [0,k+1]$ such that no other strand has support in $[i-1,i]$. Therefore, in $[0,k+1]$, strands can have support on $[0,i-1]$ or $[i,k+1]$. Note that no strand can have support beginning before $k+1$ and ending after, since it would cross the maximal preinjective strand. Similarly, no strand can have support beginning before $n-j$ and ending after. Thus the next possible interval in which strands can have support is $[k+1,n-j]$. Finally, since there is only one preprojective, no other strand has support in $[n,n+1]$ other than $c(n-j,n+1)$. It follows that the final possible support for strands is $[n-j,n]$. Note that the number of strands in each interval is given by $i-1, k-i+1, n - j - k - 1$, and $j$ respectively. Since these strands correspond to exceptional collections in type $\mathbb{A}$ with straight orientation, Theorem \ref{thm: counting exceptional sets type A}  asserts that the number of such strands is given by 
$$\displaystyle \sum_{\substack{a + b + c + d = n-1\\a+b = k\\c+d=n-k-1\\a,b,c,d\geq0}} E(a)E(b)E(c)E(d) = \displaystyle\sum_{k = 0}^{n-1}\displaystyle\sum_{i=0}^k A_{i}(1,3)A_{k-i}(1,3)\displaystyle\sum_{j=0}^{n-k-1}A_j(1,3)A_{n-k-j-1}(1,3).$$

Since the number of families of exceptional collections of type $\tilde{\mathbb{A}}_i$ for $i=1,2$ are $1$ and $8$ respectively as computed in $[\ref{ref: Maresca}]$, Proposition \ref{prop: Rothe recursion A(4,3)} yields the theorem.
\end{proof}

\subsection{Bijection with Lattice Paths}

\indent 

Let $P_n(c, d)$ be the set of lattice paths from $(0, 0)$ to $(c + nd, n)$ not touching the line $y = {x-c\over d}$ except at the end. It is well known that the generalized Catalan number $A_n(c,d+1) = |P_n(c,d)|$ [\ref{ref: generalized Catalan numbers count paths 1}], [\ref{ref: generalized Catalan numbers count paths 2}], and [\ref{ref: generalized Catalan numbers count paths 3}]. If follows from Theorem \ref{thm: counting exceptional families} that we have a bijection between outer equivalence classes of families of exceptional collections and lattice paths lying above the line $l$ defined by $y = {x-4\over2}$. In this subsection we provide an explicit formula for this bijection between strand diagrams and the aforementioned lattice paths by assigning a label to each diagram.

\begin{defn} \label{defn: label}
Let $\tilde{d}$ be a small fundamental strand diagram as in Lemma \ref{lem: existence of nice diagram} for a quiver of type $\tilde{\mathbb{A}}_n$ with straight orientation. Let $A = c(0,i)$ and $B=c(j,n+1)$ denote the maximal preinjective and unique maximal preprojective respectively. We begin our labels with $A$ and $a, b, c$ underneath and to the right followed by $B$ with $a$ underneath and to the right. There are two possibilities for a strand $X \neq B$ immediately counter clockwise from $A$ and one possibility for $X$ to be immediately clockwise from $A$ given as follows. 

\begin{enumerate}
\item $X=c(i,z)$ where $z<j$.
\item $X=c(0,y)$ where $0<y<i$.
\item $X=c(p,i)$ where $0<p<i$.
\end{enumerate}

In our label under $A$, the three words $a, b,$ and $c$ correspond to cases $1., 2.,$ and $3.$ respectively. If there is a strand in $\tilde{d}$ that falls into one of these three cases, we circle the corresponding word $x$ and introduce three new words $xa, xb,$ and $xc$. The process continues analogously for strands immediately counter clockwise from $X$ that fall in cases $1.$ and $2$. Suppose $X = c(p,i)$ falls in case $3$ and hence is clockwise from $A$. Then there are three possibilities for $Z$, some strand sharing support with $X$, given as follows.

\begin{enumerate}[(i)]
\item $Z=c(k,p)$ where $0<k<p$.
\item $X=c(p,f)$ where $p<f<i$.
\item $X=c(g,i)$ where $p<g<i$.
\end{enumerate}

In our label under the circled $x$, the three words $xa, xb,$ and $xc$ correspond to cases $(i), (ii),$ and $(iii)$ respectively. The labeling continues analogously for all other strands immediately clockwise from this new one. \\

There is only one way a strand can be either clockwise or counter clockwise from $B$. If a strand is clockwise from $B$, it is of the form $Y = c(b,j)$ where $j>b>i$, and has already been labeled under $A$. If a strand is counter clockwise, it is of the form $Y = c(j,q)$ were $q<n+1$. This case corresponds to the label $a$ under $B$. If a strand of this form is in $\tilde{d}$, we circle $a$ under $B$ and proceed as above.\\

The label is complete once there are $n-1$ circled words. 
\end{defn}

\begin{figure}[h]
    \centering

\tikzset{every picture/.style={line width=0.75pt}} %set default line width to 0.75pt        

\begin{tikzpicture}[x=0.75pt,y=0.75pt,yscale=-1,xscale=1]
%uncomment if require: \path (0,387); %set diagram left start at 0, and has height of 387

%Curve Lines [id:da9260814532105794] 
\draw [color={rgb, 255:red, 0; green, 0; blue, 0 }  ,draw opacity=1 ]   (9.22,177.48) .. controls (17.22,120.48) and (95.22,376.48) .. (218.22,195.01) ;
%Curve Lines [id:da997250983778226] 
\draw    (219.22,195.01) .. controls (342.22,350.48) and (448.22,130.48) .. (451.22,178.01) ;
%Curve Lines [id:da9096567283011463] 
\draw    (104,195) .. controls (105.22,199.01) and (119.22,226.01) .. (159.22,196.01) ;
%Curve Lines [id:da852334248434808] 
\draw    (104.22,194.01) .. controls (102.22,231.01) and (165.22,251.01) .. (217.22,195.01) ;
%Curve Lines [id:da5519382147528851] 
\draw    (221.22,195.01) .. controls (240.22,211.01) and (300.22,238.48) .. (337.22,196.01) ;
%Curve Lines [id:da3728701915270123] 
\draw    (283.22,195.01) .. controls (283.44,199.01) and (298.22,224.01) .. (338.22,194.01) ;
%Curve Lines [id:da09715425935543243] 
\draw    (338.22,193.01) .. controls (338.44,197.01) and (353.22,222.01) .. (393.22,192.01) ;
%Shape: Circle [id:dp3174580177414599] 
\draw   (472.22,56.49) .. controls (472.28,50.16) and (477.47,45.08) .. (483.8,45.15) .. controls (490.13,45.21) and (495.21,50.4) .. (495.14,56.73) .. controls (495.08,63.06) and (489.89,68.14) .. (483.56,68.07) .. controls (477.23,68.01) and (472.15,61.82) .. (472.22,56.49) -- cycle ;
%Shape: Circle [id:dp07446160581521566] 
\draw   (486.22,77.49) .. controls (486.28,71.16) and (491.47,66.08) .. (497.8,66.15) .. controls (504.13,66.21) and (509.21,71.4) .. (509.14,77.73) .. controls (509.08,84.06) and (503.89,89.14) .. (497.56,89.07) .. controls (491.23,89.01) and (486.15,83.82) .. (486.22,77.49) -- cycle ;
%Shape: Circle [id:dp5644555353079732] 
\draw   (475.22,253.49) .. controls (475.28,247.16) and (480.47,242.08) .. (486.8,242.15) .. controls (493.13,242.21) and (498.21,247.4) .. (498.14,253.73) .. controls (498.08,260.06) and (492.89,265.14) .. (486.56,265.07) .. controls (480.23,265.01) and (475.15,259.82) .. (475.22,253.49) -- cycle ;
%Shape: Circle [id:dp2841260940483865] 
\draw   (489.22,278.49) .. controls (489.28,273.16) and (494.47,267.08) .. (500.8,267.15) .. controls (507.13,267.21) and (512.21,272.4) .. (512.14,278.73) .. controls (512.08,285.06) and (506.89,290.14) .. (500.56,290.07) .. controls (494.23,290.01) and (489.15,284.82) .. (489.22,278.49) -- cycle ;
%Shape: Circle [id:dp5424417099789445] 
\draw   (489.22,366.49) .. controls (489.28,360.16) and (494.47,355.08) .. (500.8,355.15) .. controls (507.13,355.21) and (512.21,360.4) .. (512.14,366.73) .. controls (512.08,373.06) and (506.89,378.14) .. (500.56,378.07) .. controls (494.23,378.01) and (489.15,372.82) .. (489.22,366.49) -- cycle ;
%Curve Lines [id:da31205569162320423] 
\draw    (50.22,195.48) .. controls (51.44,199.49) and (64.22,224.01) .. (104.22,194.01) ;
%Shape: Circle [id:dp963919307078724] 
\draw   (487.22,149.49) .. controls (487.28,143.16) and (492.47,138.08) .. (498.8,138.15) .. controls (505.13,138.21) and (510.21,143.4) .. (510.14,149.73) .. controls (510.08,156.06) and (504.89,161.14) .. (498.56,161.07) .. controls (492.23,161.01) and (487.15,155.82) .. (487.22,149.49) -- cycle ;

% Text Node
\draw (3,177.4) node [anchor=north west][inner sep=0.75pt]    {$0$};
% Text Node
\draw (98,179.4) node [anchor=north west][inner sep=0.75pt]    {$2$};
% Text Node
\draw (155,179.4) node [anchor=north west][inner sep=0.75pt]    {$3$};
% Text Node
\draw (213,179.4) node [anchor=north west][inner sep=0.75pt]    {$4$};
% Text Node
\draw (276,179.4) node [anchor=north west][inner sep=0.75pt]    {$5$};
% Text Node
\draw (141,269) node [anchor=north west][inner sep=0.75pt]   [align=left] {A};
% Text Node
\draw (333,179.4) node [anchor=north west][inner sep=0.75pt]    {$6$};
% Text Node
\draw (389,179.4) node [anchor=north west][inner sep=0.75pt]    {$7$};
% Text Node
\draw (445,179.4) node [anchor=north west][inner sep=0.75pt]    {$8$};
% Text Node
\draw (143,230) node [anchor=north west][inner sep=0.75pt]   [align=left] {c};
% Text Node
\draw (117,188) node [anchor=north west][inner sep=0.75pt]   [align=left] {cb};
% Text Node
\draw (315,263) node [anchor=north west][inner sep=0.75pt]   [align=left] {B};
% Text Node
\draw (277,219) node [anchor=north west][inner sep=0.75pt]   [align=left] {a};
% Text Node
\draw (297,187) node [anchor=north west][inner sep=0.75pt]   [align=left] {ac};
% Text Node
\draw (353,207) node [anchor=north west][inner sep=0.75pt]   [align=left] {aa};
% Text Node
\draw (465,8) node [anchor=north west][inner sep=0.75pt]   [align=left] {A};
% Text Node
\draw (478,20) node [anchor=north west][inner sep=0.75pt]   [align=left] {a};
% Text Node
\draw (479,30) node [anchor=north west][inner sep=0.75pt]   [align=left] {b};
% Text Node
\draw (488,69) node [anchor=north west][inner sep=0.75pt]   [align=left] {ca};
% Text Node
\draw (508,89) node [anchor=north west][inner sep=0.75pt]   [align=left] {caa};
% Text Node
\draw (509,105) node [anchor=north west][inner sep=0.75pt]   [align=left] {cab};
% Text Node
\draw (509,129) node [anchor=north west][inner sep=0.75pt]   [align=left] {cac};
% Text Node
\draw (489,141) node [anchor=north west][inner sep=0.75pt]   [align=left] {cb};
% Text Node
\draw (489,206) node [anchor=north west][inner sep=0.75pt]   [align=left] {cb};
% Text Node
\draw (480,51) node [anchor=north west][inner sep=0.75pt]   [align=left] {c};
% Text Node
\draw (467,232) node [anchor=north west][inner sep=0.75pt]   [align=left] {B};
% Text Node
\draw (482,248) node [anchor=north west][inner sep=0.75pt]   [align=left] {a};
% Text Node
\draw (490,272) node [anchor=north west][inner sep=0.75pt]   [align=left] {aa};
%Text Node
\draw (510,288) node [anchor=north west][inner sep=0.75pt]   [align=left] {aaa};
% Text Node
\draw (510,302) node [anchor=north west][inner sep=0.75pt]   [align=left] {aab};
% Text Node
\draw (510,327) node [anchor=north west][inner sep=0.75pt]   [align=left] {aac};
% Text Node
\draw (491,335) node [anchor=north west][inner sep=0.75pt]   [align=left] {ab};
% Text Node
\draw (492,359) node [anchor=north west][inner sep=0.75pt]   [align=left] {ac};
% Text Node
\draw (45,179.4) node [anchor=north west][inner sep=0.75pt]    {$1$};
% Text Node
\draw (64,188) node [anchor=north west][inner sep=0.75pt]   [align=left] {ca};
% Text Node
\draw (510,155) node [anchor=north west][inner sep=0.75pt]   [align=left] {cba};
% Text Node
\draw (511,175) node [anchor=north west][inner sep=0.75pt]   [align=left] {cbb};
% Text Node
\draw (511,195) node [anchor=north west][inner sep=0.75pt]   [align=left] {cbc};

\end{tikzpicture}

    \caption{A strand diagram for a quiver of type $\tilde{\mathbb{A}}_7$ with straight orientation (left) with its corresponding label (right).}
    \label{fig: Example of Labeling}
\end{figure}
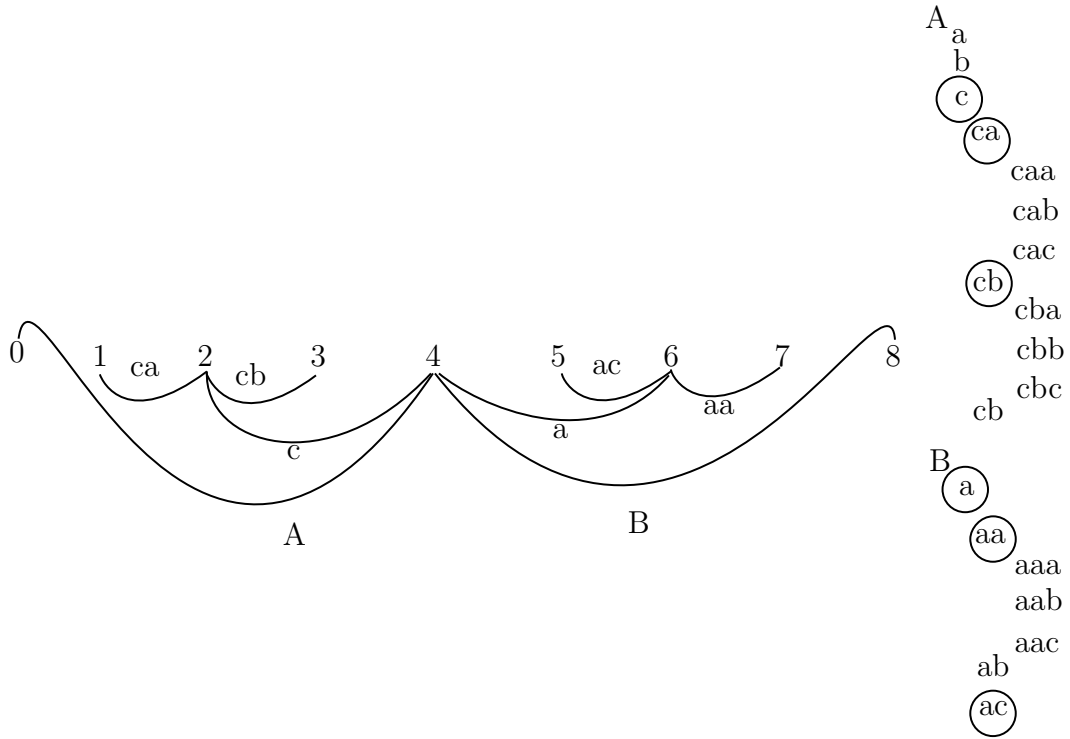

By reading this label from top to bottom, we attain a lattice path by beginning at the origin in $\mathbb{R}^2$, moving one unit horizontally for each un-circled word beginning with a lowercase letter and one unit vertically for each circled word beginning with a lowercase letter. \\

\begin{thm}\label{thm: label bijection}
The label given in Definition \ref{defn: label} provides a bijection from the set of outer equivalence classes of families of exceptional collections of type $\tilde{\mathbb{A}}_n$ with straight orientation to the set of lattice paths $P_{n-1}(4,2)$. 
\end{thm}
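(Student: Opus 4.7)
The plan is to recognize Definition \ref{defn: label} as an encoding of canonical small fundamental strand diagrams (in the sense of Lemma \ref{lem: existence of nice diagram}) by ternary forests with four trees, and then compose with the standard bijection between such forests and the lattice paths in $P_{n-1}(4,2)$. This is the natural $\tilde{\mathbb{A}}$-analog of the $\mathbb{A}_n$ bijection of Theorem \ref{thm: lattice paths and relative injectives}. I would view the label as a forest $F$ of four rooted ternary trees: three rooted at the slots $a, b, c$ under $A$ and one rooted at the slot $a$ under $B$. Each slot $X$ is either a leaf (uncircled, no strand occupies that position) or an internal node with three ordered children $Xa, Xb, Xc$ (circled, corresponding to the three subcases of Definition \ref{defn: label} for strands neighboring the strand at slot $X$). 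Since the canonical diagram has $n + 1$ strands total and two of them ($A$ and $B$) correspond to the uppercase roots, $F$ has exactly $n - 1$ internal nodes and therefore $2(n-1) + 4 = 2n + 2$ leaves.

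The top-to-bottom reading of the label is precisely the pre-order DFS of $F$: first the three subtrees of $T_A$ in the order $a, b, c$, then the single subtree of $T_B$. Each uncircled lowercase slot contributes a horizontal step and each circled slot contributes a vertical step (with the uppercase $A, B$ ignored), producing a lattice path from $(0, 0)$ to $(2n + 2, n - 1)$. To show this path lies in $P_{n-1}(4, 2)$, set $f(p) = (\text{rights read in } p) - 2(\text{ups read in } p)$. I would prove by induction on subtree size that the DFS of any subtree of $F$ rooted at a slot, when begun at cumulative level $v$, ends at level $v + 1$ and satisfies $f \leq v + 1$ throughout, with equality only at its final step. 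The leaf case is immediate; for an internal subtree the initial up-step takes $f$ to $v - 2$, after which by induction the three child subtrees move $f$ successively from $v - 2$ to $v - 1$ to $v$ to $v + 1$, each attaining its maximum only at its own final step. Applied to the four subtrees of $F$ starting from $f = 0$, this gives $f \leq 3$ throughout $T_A$ and $f \leq 4$ within $T_B$, with $f = 4$ attained only at the very end. This is exactly the requirement that the path lies strictly above $y = (x - 4)/2$ except at the endpoint.

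Bijectivity then follows from a counting argument: Theorem \ref{thm: counting exceptional families} gives $A_{n-1}(4, 3)$ outer equivalence classes, and by the identity $|P_n(c, d)| = A_n(c, d + 1)$ recalled at the start of Section \ref{sec: affine A exceptional collections}, $|P_{n-1}(4, 2)| = A_{n-1}(4, 3)$ as well. It therefore suffices to establish injectivity, which I would obtain by reversing the construction: cut the lattice path at the four points where $f$ first reaches $1, 2, 3, 4$ to recover the four subtrees of $F$ (via the classical Raney-type bijection between elevated Dyck-like excursions and ternary trees), and then apply the recursive endpoint rules of Definition \ref{defn: label} to recover the strand diagram from $F$.

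The main obstacle I anticipate is the verification underlying the first step: one must confirm that Definition \ref{defn: label} places every non-maximal strand in exactly one slot of $F$ and that the word labeling a circled slot unambiguously determines the endpoints of its strand. This requires a careful case analysis separating the counter-clockwise slots (cases 1 and 2 under $A$, and the single case under $B$) from the clockwise case (case 3 under $A$, where the clockwise/counter-clockwise roles of the subsequent children are interchanged). Once this well-definedness is settled, the lattice-path bound becomes a routine Raney-type induction and the bijection follows immediately from the cardinality match.
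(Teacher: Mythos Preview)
Your proposal is correct and follows the same overall architecture as the paper: show the label produces a path in $P_{n-1}(4,2)$, then conclude bijectivity by the cardinality match from Theorem \ref{thm: counting exceptional families} together with injectivity of the labeling. The difference lies in how well-definedness is argued. The paper does not name the forest structure at all; it simply tracks the running count of uncircled versus circled words, observing that each new circle removes one pending word and adds three, so with four initial slots the number of uncircled words before the $i$th circled word never exceeds $2(i-1)+3$. Your argument instead makes the ternary forest with four roots explicit and proves the bound by a Raney-type induction on subtrees, which is cleaner and also hands you the inverse map (cut the path where $f$ first hits $1,2,3,4$) rather than merely asserting, as the paper does, that the label is unique. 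The extra case analysis you flag, checking that Definition \ref{defn: label} really assigns each non-maximal strand to a unique slot and that the clockwise case $3$ swaps the orientation of the children, is glossed over in the paper as well, so your caution there is warranted but not a divergence from the paper's strategy.
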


\begin{proof}
We first show that the map is well defined by showing each label produces a lattice path that lies above the line $l$. To do this, we note that on the $(i+1)$st step of the path, when $y=i$ there are $3+2i$ possible horizontal movements before hitting the line $l$. In our label, the first circled word must be one of $a, b,$ or $c$ under $A$ or $a$ under $B$. Therefore, the first step of the lattice path lies above the line $l$. We proceed inductively. \\

Suppose we have $i-1$ circled words and $j$ uncircled ones prior to the $(i-1)$st circled one where $j \leq 2(i-2)$. After circling the $(i-1)$st word, there at most two words of the same length left uncircled. After circling the $i$th word, three new words are introduced. It follows that the maximum number of uncircled words after circling the $i$th one is $j + 5 \leq 2(i-2) + 5 = 2(i-1) + 3$. \\

Since $A_{n-1}(4,3) = |P_{n-1}(4,2)|$, it follows that $P_{n-1}(4,2)$ is a finite set. Moreover, the label association in Definition \ref{defn: label} is unique so the map is an injection between two finite sets of the same cardinality by Theorem \ref{thm: counting exceptional families}, hence a bijection. 
\end{proof}

\section{Examples}

In this section, we provide explicit examples for $Q$ a quiver of type $\tilde{\mathbb{A}}_3$ with straight orientation.

\begin{exmp}
The table below depicts all $15 = 3(5)$ small triangulations of $A_{Q^{\varepsilon}}$ and the corresponding cluster underneath. Note that each row represents an outer equivalence class.

\begin{center}
\includegraphics[width = 12 cm, height = 18 cm]{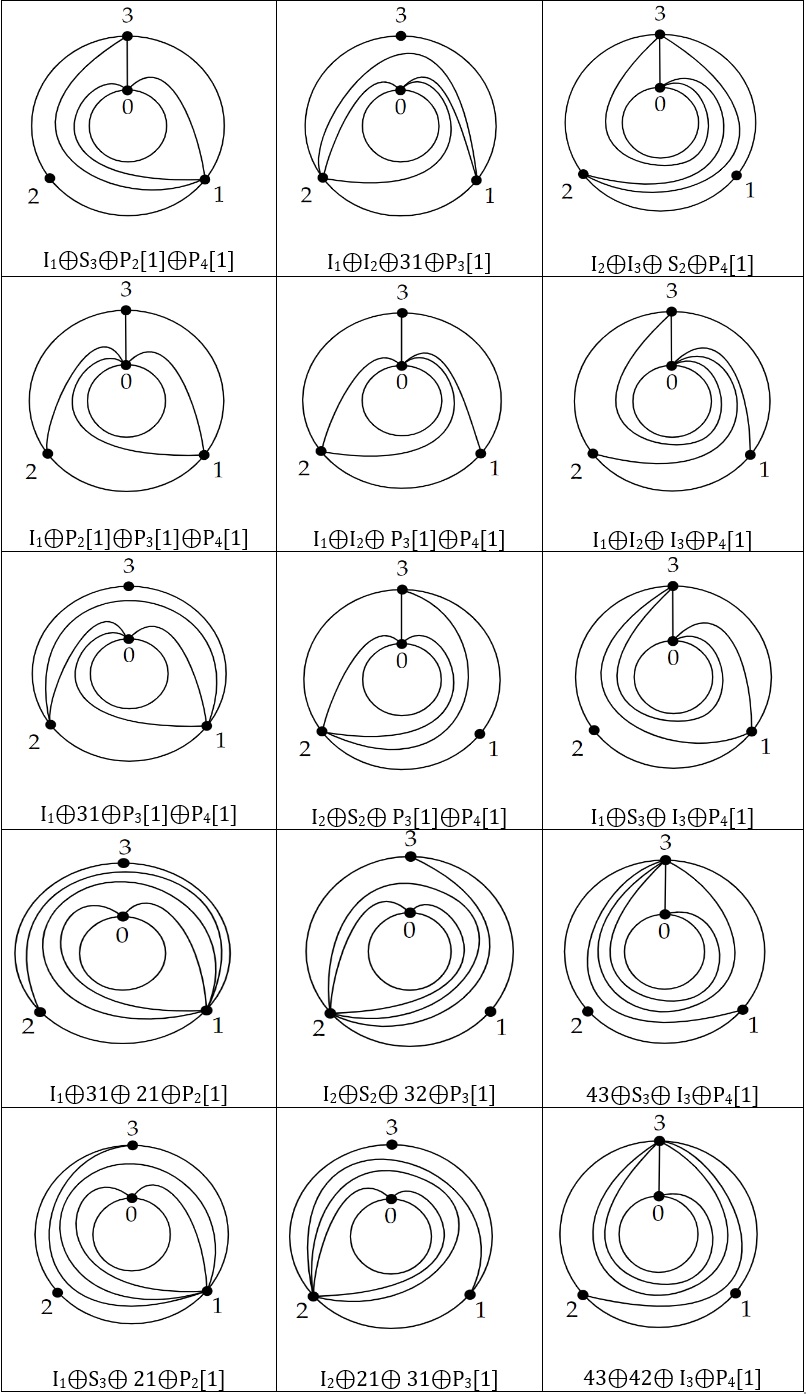}
\end{center}

\end{exmp}
\eject
\begin{exmp}\label{exmp: families of exceptional collections}
The tables below depict the $A_2(4,3) = 18$ unique small fundamental arc diagrams from Lemma \ref{lem: existence of nice diagram} and the corresponding strand diagram underneath in the left column. In the right column, it shows the corresponding lattice path and label. The colors are mainly to more easily see the lifts of the strands; however, the green strand is the maximal projective and the yellow strand is the maximal injective. The other colors do not carry any meaning.
\begin{center}
\includegraphics[width = 16.5 cm, height = 18 cm]{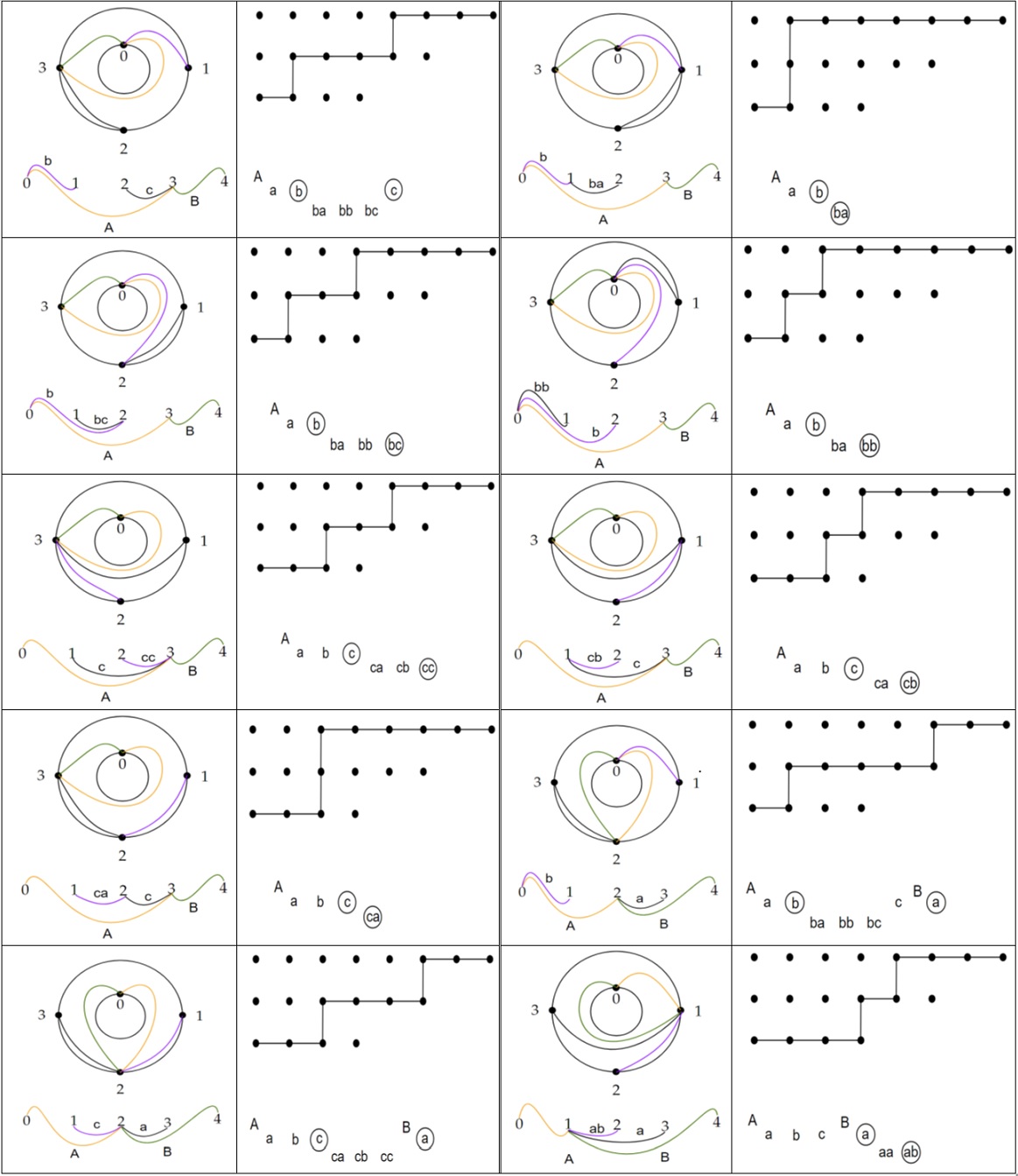}
\end{center}

\begin{center}
\includegraphics[width = 16.5 cm, height = 18 cm]{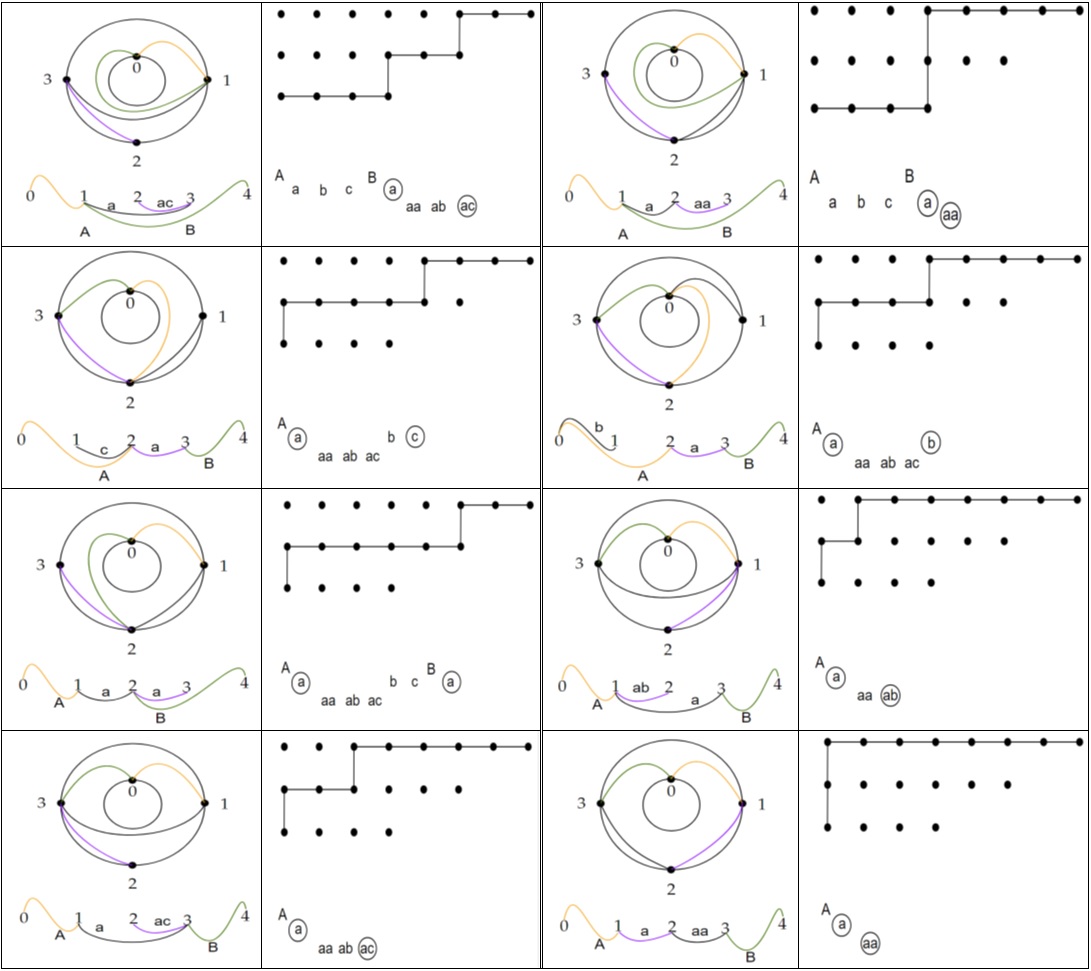}
\end{center}
\end{exmp}

\pagebreak

\section{References}

\begin{enumerate}[ {[}1{]} ]

\item Assem, I., Simson, D., Skowronski, A., \textit{Elements of the representation theory of associative algebras: volume 1: techniques of representation theory.}. Cambridge University Press. 2006. \label{ref: blue book} \\ 

\item Crawley-Boevey, W. \textit{Exceptional sequences of representations of quivers}. [MR1206935 (94c:16017)]. In Representations of algebras (Ottawa, ON, 1992),volume 14 of CMS Conf. Proc., pages 117–124. Amer. Math. Soc., Providence, RI, 1993. \label{ref: Bill Exceptional Sequences} \\

\item Dvoretzky, A., Motzkin, T., \textit{A problem of arrangements}. Duke Mathematical Journal, 14 (2), 305–313, 1947 \label{ref: generalized Catalan numbers count paths 2} \\

\item Fomin, S., Shapiro, M., Thurston, D. \textit{Cluster algebras and triangulated surfaces part I: cluster complexes}. Preprint, arXiv:0608367v3 [math.RA], 2007. \label{ref: clusters are in bijection with triangulations}\\

\item Garver, A., Igusa, K., Matherne, P., J., Ostroff, J. \textit{Combinatorics of exceptional sequences in type A}. The Electronic Journal of Combinatorics. \textbf{26} 2019. \label{ref: combinatorics of exceptional sequences type A}\\ 

\item Grossman, H.D., \textit{Fun with lattice points}. —21, Scripta Math., 16, 1950 \label{ref: generalized Catalan numbers count paths 1} \\

\item Gould H.W., \textit{Some generalizations of Vandermonde’s convolution}. The American Mathematical Monthly, 63 (2), 84–91, 1956 \label{ref: Rothe Convolution} \\

\item Gould, H. W. \textit{Fundamentals of series, eight tables based on seven unpublished
manuscript notebooks}. (1945–1990), edited and compiled by J. Quaintance 2010 \label{ref: Gould Generalized Catalan Numbers} \\

\item Hilton, P., Pedersen, J., \textit{Catalan numbers, their generalization, and their uses}. The Mathematical Intelligencer, VOL 13, NO 2, 1991 \label{ref: Catalan Numbers} \\

\item Igusa, K., Sen, E. \textit{Exceptional sequences and rooted labeled forests}. Preprint, arXiv:2108.11351 [math.RT], 2021. \label{ref: Igusa and Sen Rooted Labeled Forests} \\

\item Igusa, K., Schiffler, R. \textit{Exceptional sequences and clusters}. Jounal of Algebra 323 2010 2183-2202 \label{ref: Clusters form exceptional collections} \\

\item Kahkeshani, R. \textit{A generalization of the Catalan numbers}. Journal of Integer Sequences, Vol. 16 2013. \label{ref: RK}\\

\item Laking, R. \textit{String algebras in representation theory}. [Thesis]. Manchester, UK: The University of Manchester; 2016. \label{ref: String Algebra Info} \\

\item Maresca, R. \textit{Combinatorics of exceptional sequences of type $\tilde{\mathbb{A}}_n$}. Preprint, arXiv:2204.00959 [math.RT], 2022 \label{ref: Maresca} \\  

\item Mohanty, S. G., \textit{Lattice path counting and applications}. Academic Press, New York, 1979 \label{ref: generalized Catalan numbers count paths 3}\\

\item Richardson, S. L. Jr., \textit{Enumeration of the generalized Catalan numbers}. M.Sc. Thesis, Eberly College of Arts and Sciences, West Virginia University, Morgantown, West Virginia 2005 \label{ref: Master's Thesis Generalized Catalan Numbers}  \\ 

\item Ringel, M., C. \textit{The braid group action on the set of exceptional sequences of a hereditary Artin algebra}. Abelian group theory and related topics, Oberwolfach, 1993, volume 171 of Contemp. Math., pages 339–352. Amer. Math. Soc., Providence, RI, 1994. \label{ref: Braid Group Action on Exceptional Sequences}\\

\item Reiten, I. \textit{Cluster categories}. Preprint, arXiv:1012.4949v1 [math.RT], 2010 \label{ref: Reiten Cluster Categories} \\ 

\item Schiffler, R. \textit{Quiver representations}. CMS Books in Mathematics, Springer International Publishing 2014 \label{ref: Schiffler Quiver Reps} \\

\item  Seidel, U. \textit{Exceptional sequences for quivers of Dynkin type}. Communications in Algebra, 29(3),
2001, 1373–1386 \label{ref: Dynkin Exceptional Sequences} \\

\item Warkentin, M. \textit{Fadenmoduln $\ddot{u}$ber $\tilde{A}_n$ und Cluster-Kombinatorik}. Diploma Thesis, University of Bonn. Available from http://nbn-resolving.de/urn:nbn:de:bsz:ch1-qucosa-94793 2008 \label{ref: Master's Thesis Clusters and Triangulations} \\

\end{enumerate}

\end{document}